\newcommand\Lin{\mathrm{Lin}}
\newcommand\diam{\mathrm{diam}}
\newcommand\vol{\mathrm{vol}}
\newcommand\Id{\mathrm{Id}}
\newcommand\id{\mathrm{id}}
\newcommand\R{\mathbb{R}}
\newcommand\N{\mathbb{N}}
\newcommand\eps{\varepsilon}
\newcommand{\dist}{\mathrm{dist}}
\newcommand{\Isom}{\mathrm{Isom}}
\numberwithin{equation}{section}
\newtheorem{theorem}{Theorem}[section]
\newtheorem{lemma}[theorem]{Lemma}
\newtheorem{definition}[theorem]{Definition}
\newtheorem{proposition}[theorem]{Proposition}
\newtheorem{corollary}[theorem]{Corollary}
\newtheorem{remark}[theorem]{Remark}
\newcommand{\inj}{\operatorname{inj}}
\newcommand{\lin}{\mathop{\mathrm{Lin}}}
\newcommand\cof{\mathop{\mathrm{cof}}}
\newcommand\Cof{\mathop{\mathrm{Cof}}}
\newcommand\Det{\mathop{\mathrm{Det}}}
\DeclareMathOperator{\Div}{div}
\newcommand\Tr{\mathop{\mathrm{Tr}}}
\newcommand\Exp{{\mathrm{Exp}}}
\newcommand{\tr}{\operatorname{tr}}
\newcommand\oldalpha{\eta}
\newcommand\oldp{r}
\newcommand\oldr{p}
\newcommand\tmpexp{b}
\begin{document}
\thispagestyle{first}
\begin{center}
{ \Large Optimal rigidity estimates for maps of a compact Riemannian manifold to itself
\\[5mm]}
{\today}\\[5mm]
Sergio Conti$^{1}$, Georg Dolzmann$^{2}$, Stefan M\"uller$^{1,3}$ \\[2mm]
{\em $^1$ Institut f\"ur Angewandte Mathematik,
 Universit\"at Bonn,\\ 53115 Bonn, Germany }\\
 {\em $^{2}$ Fakult\"at f\"ur Mathematik, Universit\"at Regensburg,\\
  93040
   Regensburg, Germany}\\
{\em $^3$ Hausdorff Center for Mathematics,
 Universit\"at Bonn,\\ 53115 Bonn, Germany }\\

 \begingroup
 \def\thefootnote{}
\footnotetext{
S.M. would like to thank Cy Maor for a very inspiring discussion during the
HIM Trimester program 'Mathematics for complex materials' and for pointing out
the importance of the Riemannian Piola identity.

This work was partially funded by the Deutsche Forschungsgemeinschaft (DFG, German Research Foundation) {\sl via} project 211504053 - SFB 1060, and project 390685813, GZ 2047/1.
}
\endgroup

\bigskip

\begin{minipage}[c]{0.85\textwidth}\small
\begin{center} {\bf  Abstract}
\end{center}
Let $M$ be a smooth, compact, connected,  oriented Riemannian manifold and let $\imath: M \to \R^d$ be an isometric embedding. We show that a Sobolev map $f: M \to M$ which has the property that the differential
$df(q)$ is close to the set $SO(T_q M, T_{f(q)} M)$ of orientation preserving isometries (in an $L^p$ sense)
is already $W^{1,p}$ close to a global isometry of $M$. More precisely we prove
for $p \in (1,\infty)$  the optimal linear estimate
$$\inf_{\phi \in \mathrm{Isom}_+(M)} \| \imath \circ  f - \imath \circ \phi\|_{W^{1,p}}^p  \le C E_p(f)$$
where
$$ E_p(f)  :=  \int_M  \dist^p(df(q), SO(T_q M, T_{f(q)} M)) \, d\vol_M$$
and where $\mathrm{Isom}_+(M)$ denotes the group of orientation preserving isometries of $M$.

This extends the  Euclidean rigidity estimate of Friesecke-James-M\"uller [Comm. Pure Appl. Math. {\bf 55} (2002), 1461--1506]
to Riemannian manifolds. It also extends the  Riemannian stability result of
Kupferman-Maor-Shachar [Arch. Ration. Mech. Anal. {\bf 231} (2019), 367--408] for sequences of maps with $E_p(f_k) \to 0$ to an optimal  quantitative estimate.

The proof relies on the weak
 Riemannian Piola identity of Kupferman-Maor-Shachar, a uniform $C^{1,\alpha}$ approximation
through the harmonic map heat flow,
and a linearization argument which reduces the estimate to the well-known Riemannian version of Korn's inequality.

\medskip
{\bf Keywords:} rigidity estimates, elasticity, almost-isometric maps, geometric analysis
\\[4mm]
\mbox{}
\end{minipage}
\end{center}

\section{Introduction}

\subsection{Main result}
In Euclidean space,  maps that are almost isometric and almost orientation preserving enjoy the following rigidity property.
\begin{theorem}[\cite{FrieseckeJamesMueller2002-CPAM}, Theorem~3.1]
\label{th:euclidean_rigidity}   
Let $U \subset \R^n$ be open, bounded and connected, with Lipschitz
boundary. Then there exists a constant $C$, depending on $U$ and $n$, such that for every $f \in W^{1,2}(U; \R^n)$ there
exists a constant matrix $R \in SO(n)$ with
\begin{equation} \label{eq:euclidean_rigidity}
\int_U |df - R|^2 \, dx \le  C \int_U  \dist^2(df, SO(n)) \, dx.
\end{equation}
\end{theorem}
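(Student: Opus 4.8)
The plan is to reduce the estimate, via a Lipschitz truncation, to the case in which $df$ is bounded in $L^\infty$ by a purely dimensional constant, and then to treat that case by a compactness argument resting on Korn's inequality and the classical rigidity of $SO(n)$-valued gradients. Write $E:=\int_U\dist^2(df,SO(n))\,dx$. I would first dispose of the regime $E\ge c_0|U|$ (for a suitable $c_0=c_0(n)$), which is trivial with $R=\Id$: since $|df|\le\dist(df,SO(n))+\sqrt n$ one gets $\int_U|df-\Id|^2\,dx\le 2\int_U|df|^2\,dx+2|U|\le C(E+|U|)\le CE$. So I may assume $E$ is small. For this I would apply the standard maximal-function Lipschitz truncation of $f$ at a level $\lambda\sim\sqrt n$: it produces $\tilde f\in W^{1,\infty}(U;\R^n)$ with $\tilde f=f$ outside a bad set $B$, with $\|d\tilde f\|_{L^\infty}\le C\sqrt n$ (hence $\dist(d\tilde f,SO(n))\le C\sqrt n$ a.e.), with $|B|\le CE$ (because $B\subset\{M(\dist(df,SO(n)))>\lambda-\sqrt n\}$ and the Hardy--Littlewood operator is of weak type $(2,2)$), and — using the truncation error estimate together with $|df|\le\dist(df,SO(n))+\sqrt n$ on $B$ — with $\|df-d\tilde f\|_{L^2(U)}^2\le CE$ and $\int_U\dist^2(d\tilde f,SO(n))\,dx\le CE$. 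Hence it suffices to prove the estimate for maps whose gradient lies pointwise within a fixed dimensional distance of $SO(n)$; adding back $\|df-d\tilde f\|_{L^2}^2\le CE$ then recovers the general case.

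For this bounded case I would argue by contradiction. Suppose there are $w_k\in W^{1,2}(U;\R^n)$ with $\dist(dw_k,SO(n))\le C_1\sqrt n$ a.e.\ and $\inf_{R\in SO(n)}\int_U|dw_k-R|^2\,dx>k\,\sigma_k^2$, where $\sigma_k^2:=\int_U\dist^2(dw_k,SO(n))\,dx$. Since $dw_k$ is bounded in $L^\infty$, the left-hand side is bounded, so $\sigma_k\to0$, i.e.\ $\dist(dw_k,SO(n))\to0$ in $L^2$. After subtracting affine maps, $(w_k)$ is bounded in $W^{1,\infty}(U)$ and, along a subsequence, converges weak-$\ast$ to some $w$. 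Weak-$\ast$ continuity of the minors on $L^\infty$-bounded sequences, combined with $\det dw_k\to1$ and $|dw_k|^2\to n$ pointwise a.e.\ and boundedly, forces $\det dw=1$ and $|dw|^2\le n$ a.e.; by the arithmetic--geometric mean inequality for singular values this means $dw\in SO(n)$ a.e., and the classical rigidity theorem for Sobolev maps with gradient in $SO(n)$ (Reshetnyak, Liouville) gives $dw\equiv R_\infty$ for a fixed $R_\infty\in SO(n)$. A short computation with the $L^2$ inner product shows $\int_U|dw_k-R_\infty|^2\,dx\to n|U|-2n|U|+n|U|=0$, so $dw_k\to R_\infty$ strongly in $L^2$; in particular $t_k^2:=\inf_R\int_U|dw_k-R|^2\,dx\to0$ while $\sigma_k^2/t_k^2<1/k\to0$. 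Picking $\bar R_k\in SO(n)$ optimal (so $\bar R_k\to R_\infty$), I would rescale: $H_k:=t_k^{-1}(\bar R_k^{\,T}dw_k-\Id)$ is a gradient with $\|H_k\|_{L^2}=1$, and optimality of $\bar R_k$ gives $\int_U\mathrm{skew}\,H_k=0$. Korn's second inequality on $U$ then yields $1=\|H_k\|_{L^2}^2\le C_U\|\sym\,H_k\|_{L^2}^2$, whereas $t_k^{-2}\int_U\dist^2(\Id+t_kH_k,SO(n))\,dx=\sigma_k^2/t_k^2\to0$; if this last fact forced $\|\sym\,H_k\|_{L^2}\to0$, the contradiction would be complete.

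The hard part is precisely that last implication. The nonlinear distance $\dist^2(\cdot,SO(n))$ is \emph{not} pointwise comparable to the linearised strain near $SO(n)$: for a skew matrix $W$ one has $\dist^2(\Id+W+\tfrac12W^2,SO(n))=O(|W|^6)$ while $|\sym(W+\tfrac12W^2)|^2\sim|W|^4$, so a ``rotation-like'' perturbation stores energy of strictly higher order, and the naive bound $\dist^2(\Id+A,SO(n))\ge c|\sym A|^2$ is false. One therefore cannot merely integrate the second-order expansion; instead one must use that $H_k$ is curl-free — equivalently the rigidity of $SO(n)$, i.e.\ the absence of rank-one connections within $SO(n)$ — to exclude the ``two-phase'' behaviour in which $dw_k$ is close to $R_\infty$ on most of $U$ but close to a distant part of $SO(n)$ on a set of small measure, which is essentially the only mechanism compatible with $\sigma_k$ small but $t_k$ comparatively large. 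Turning this into a quantitative control of the region where $|dw_k-R_\infty|$ fails to be small is, I expect, the technical heart of the argument; by comparison the truncation reduction of the first step and the rigidity and weak-continuity inputs of the second are routine. Finally, no extra work is needed to pass from a cube to a general bounded Lipschitz domain $U$: the maximal-function truncation, Korn's inequality and the weak-continuity of minors all apply on $U$ directly, so the constant produced by the contradiction argument depends only on $U$ and $n$.
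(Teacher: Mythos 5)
Your reduction to bounded gradients, the compactness argument giving $dw_k\to R_\infty$ strongly in $L^2$ and $t_k\to 0$, the first-order optimality condition $\int_U\mathrm{skew}\,H_k=0$, and the Korn step are all correct; but the proof stops exactly where the theorem begins. The implication you flag at the end --- ``$\sigma_k^2/t_k^2\to 0$ forces $\|\sym H_k\|_{L^2}\to 0$'' --- is not a technical remainder that more of the same tools will deliver: it is essentially the quantitative content of the estimate itself. After your normalization you only know $\|H_k\|_{L^2}=1$ and $|t_kH_k|\le C$ pointwise. Splitting $U$ into $\{|t_kH_k|\le\eps\}$ and its complement, the expansion $\dist(\Id+t_kH_k,SO(n))\ge t_k|\sym H_k|-C|t_kH_k|^2$ does handle the first set, but the complement has small measure while possibly carrying a non-vanishing part of $\int_U|H_k|^2$, and there $\bar R_k^{\,T}dw_k$ may sit within $o(1)$ of a rotation far from $\Id$, so $\dist(\Id+t_kH_k,SO(n))$ gives no control whatsoever of $\sym H_k$. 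None of the tools you have assembled (maximal-function truncation, weak continuity of minors, Liouville/Reshetnyak, Korn) excludes this concentration: Korn sees only the symmetric part and the normalization, and Liouville is purely qualitative, so $\sigma_k=o(t_k)$ is perfectly compatible, as far as your argument goes, with $\|\sym H_k\|_{L^2}$ staying of order one. Quantitatively excluding this ``two-phase'' behaviour of gradients is precisely the Friesecke--James--M\"uller theorem, so the argument is circular at its decisive point.

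The missing idea is the one the paper's own outline isolates as step (ii): the Piola identity $\Div\cof df=0$ together with the pointwise bound $|F-\cof F|\le C_L\,\dist(F,SO(n))$ for $|F|\le L$ shows that the truncated map is almost harmonic, so it splits as $u+w$ with $u$ harmonic and $\|dw\|_{L^2}^2\le C\int_U\dist^2(df,SO(n))\,dx$. This is what converts qualitative compactness into quantitative information: interior estimates for the harmonic part show that (after composing with a fixed rotation) $df$ is uniformly $C^1$-close to $\Id$ on interior subdomains up to an error already controlled linearly by the energy, and only in that regime is $\dist(F,SO(n))$ uniformly comparable to $|\sym(F-\Id)|$, so that your linearization-plus-Korn step closes; the potential bad set is absorbed into $\|dw\|_{L^2}^2$ rather than left as an uncontrolled concentration. (In the FJM scheme one then still needs a covering/weighted argument to pass from interior estimates to the Lipschitz domain $U$, so your closing remark that no extra work is needed for general $U$ is also too quick --- though this is moot as long as the main gap is open.)
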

In other words, if $df$ is $L^2$ close to the \emph{set} $SO(n)$ of matrices, then it is $L^2$ close
to a single matrix.
The result can be extended to $L^p$ estimates (for $1 < p < \infty$) and Lorentz-spaces estimates,  see, for example,  \cite{ContiDolzmannMueller2014}.

Such rigidity estimates have a long history. The fact that $df \in SO(n)$ almost everywhere
implies that $df$ is constant is known as Liouville's theorem. For $C^3$ maps it is proved in
Liouville's paper \cite{Liouville1850}.  For Sobolev maps it follows from Reshetnyak's approach to
 quasiconformal maps. In fact, Reshetnyak also established a stability result. He showed that if
 the right hand side of \eqref{eq:euclidean_rigidity} goes to zero then the left hand side goes to zero \cite{Reshetnyak1967}.
 Indeed,  Reshetnyak obtained such stability estimates also for  almost conformal maps.

The first estimate with the optimal linear scaling appears in
 a fundamental paper of John \cite[Theorem IV, p.\ 410]{JohnCPAM1961}. He proved the estimate
for $C^1$ maps with $\dist(df, SO(n))$ sufficiently small or, more generally,  locally bilipschitz maps with
bilipschitz constant close to $1$. In this case, one obtains a bound for the BMO norm of $df$ and John's paper
actually marks the birth of the space BMO: the paper \cite{JohnNirenberg1961BMO}, which introduces the space $\mathrm{BMO}$ of functions of bounded mean oscillation
appears right after \cite{JohnCPAM1961}.
Regarding $L^p$ estimates, Kohn \cite{Kohn82} established optimal estimates for
$f - (Rx + \text{const})$ in $L^p$, but not for $df- R$.

Linearization of \eqref{eq:euclidean_rigidity} gives an estimate of the full
derivative (up to a constant skew-symmetric matrix) in terms of its symmetric part, i.e., Korn's inequality.
Thus  \eqref{eq:euclidean_rigidity} can be seen as the natural geometrically nonlinear version  of Korn's inequality.

\medskip

The estimate \eqref{eq:euclidean_rigidity} plays an important role in nonlinear elasticity, in particular
for the rigorous derivation of theories for lower-dimensional elastic objects, such as plates, shells, rods and beams
(see, for example,
\cite{FrieseckeJamesMueller2002-CPAM,FrieseckeJamesMueller2006}
and the surveys
\cite{Mueller2017Review,Lewicka2023Buch})
and the rigorous derivation of linear elasticity from nonlinear elasticity \cite{DalmasoNegriPercivale2002}.
Other applications include models of dislocations and grain boundaries, see for example
\cite {LauteriLuckhaus,GarroniFortunaSpadaro}.
\medskip

In view of the recent interest in noneuclidean elasticity as well as out of intrinsic geometric interest,
there has  been a lot of activity in extending  \eqref{eq:euclidean_rigidity} to a Riemannian setting.
In particular,  Kupferman, Maor and Shachar \cite{KupfermanMaorShachar2019}  have  obtained  a  corresponding compactness and stability result
for maps between Riemannian manifolds. Roughly speaking,  if $f_k: M \to N$  are
 maps between oriented $n$-dimensional
Riemannian manifolds, $1 < p < \infty$  and $\dist(df_k, SO(M,N))$
(see below for a precise definition of this expression) converges to
zero in $L^p$, then the sequence $f_k$ converges in a $W^{1,p}$ sense to an isometric immersion $f_0$.
The argument in \cite{KupfermanMaorShachar2019} is based on a subtle extension of Reshetnyak's compactness argument
in the Euclidean case, but it does not provide an explicit estimate how close $f_k$ is to $f_0$.

Here we show the optimal rigidity estimate for  maps from a compact
manifold
to itself.

\begin{theorem} \label{th:optimal_riemannian_rigidity}
 Let $M$ be a smooth, compact, oriented, connected $n$-dimensional Riemannian manifold,
 and
let $\imath : M \to \R^d$ be a smooth isometric embedding.
Let $ 1 < p < \infty$. Then there exists a constant
$C$, which depends on $M$, the embedding into $\R^d$,   and $p$, such that for each $f \in W^{1,p}(M; M)$
there exists an orientation preserving  isometry
$\phi: M \to M$ such that
\begin{align}  \label{eq:optimal_riemannian_rigidity}
  & \,  \int_M |\imath \circ  f - \imath \circ \phi|^p + |d ( \imath \circ f)  - d (\imath \circ \phi)|^p \, d\vol_M   \\
\le   & \,  C \int_M \dist^p(df, SO(M)) \, d\vol_M.
 \nonumber
\end{align}
\end{theorem}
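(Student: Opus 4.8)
\noindent The plan is to regularise $f$ through a short-time harmonic map heat flow into a map to which a linearised rigidity estimate (the Riemannian Korn inequality) applies, and then to transfer the estimate back to $f$.

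\noindent\textbf{Step 1: reduction to small defect.} Set $E_p(f):=\int_M\dist^p(df,SO(M))\,d\vol_M$. If $E_p(f)\ge\delta$ for a fixed $\delta>0$ to be chosen, the estimate is trivial with $\phi=\id_M$: since $\imath$ is isometric, $|d(\imath\circ f)|=|df|\le\sqrt n+\dist(df,SO(M))$, so by compactness of $M$ the left-hand side of \eqref{eq:optimal_riemannian_rigidity} is at most $C_1+C_2E_p(f)\le(C_1/\delta+C_2)E_p(f)$. Hence I may assume $E_p(f)<\delta$ with $\delta$ as small as convenient.

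\noindent\textbf{Step 2: heat-flow regularisation (the main step).} Let $u:[0,t^\ast]\times M\to M\subset\R^d$ solve the harmonic map heat flow $\partial_t u=\tau(u)$, $u(0,\cdot)=f$, i.e.\ in extrinsic form $\partial_t(\imath\circ u)=\Delta_M(\imath\circ u)+\mathrm{II}(u)(du,du)$. Because $|du|^2\le 2n+2\dist^2(du,SO(M))$, the Dirichlet energy of $f$ over every geodesic ball of a fixed small radius $\rho_0$ lies below the $\eps$-regularity threshold once $\rho_0$ and $\delta$ are small; Struwe-type local energy inequalities together with $\eps$-regularity then yield short-time existence and \emph{uniform} a priori bounds $\|du(t)\|_{C^{0,\alpha}}\le C$ on $[0,t^\ast]$, for a fixed $t^\ast=t^\ast(\rho_0)$ and $C=C(M)$. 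Set $g:=u(t^\ast)\in C^{1,\alpha}(M;M)$. The quantitative point is the \emph{weak Riemannian Piola identity} of Kupferman-Maor-Shachar: it allows one to rewrite the tension field, when tested against variation fields, as a divergence of a quantity that vanishes whenever $du\in SO(M)$, whence $\|\tau(u(t))\|_{W^{-1,p}}\le C\,\|\dist(du(t),SO(M))\|_{L^p}$. Combining this with a differential inequality for $t\mapsto\int_M\dist^p(du(t),SO(M))\,d\vol_M$ and with the parabolic smoothing gives $E_p(g)\le C\,E_p(f)$ and, via the representation $\imath\circ g-\imath\circ f=\int_0^{t^\ast}d\imath(\tau(u(t)))\,dt$, the sharp bound $\|\imath\circ g-\imath\circ f\|_{W^{1,p}}^p\le C\,E_p(f)$. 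I expect this to be the main obstacle: one needs short-time existence with constants depending only on $M$ and $p$ (for $p<2$ the datum need not lie in $W^{1,2}$ and a preliminary truncation, or a $p$-adapted flow, is required), and above all the \emph{linear} power $E_p(f)$ rather than the weaker power produced by the naive energy-dissipation estimate --- it is precisely the Piola identity that closes this gap.

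\noindent\textbf{Step 3: qualitative rigidity of $g$.} Interpolating the uniform bound $\|\dist(dg,SO(M))\|_{C^{0,\alpha}}\le C$ against $\|\dist(dg,SO(M))\|_{L^p}=E_p(g)^{1/p}\le C\,E_p(f)^{1/p}$ gives $\|\dist(dg,SO(M))\|_{L^\infty}\le\omega(E_p(f))$ with $\omega(s)\to0$. A compactness argument then shows that, for $\delta$ small, $g$ lies in a small $C^1$-neighbourhood of some $\phi_0\in\Isom_+(M)$: along any sequence with $E_p(f_k)\to0$ the maps $g_k$ are $C^{1,\alpha}$-bounded with $dg_k\to SO(M)$ uniformly, so a subsequence converges in $C^1$ to a smooth local isometry $g_\infty$; since $\det dg_k\to1$ uniformly we get $\deg g_k=\frac1{\vol(M)}\int_M\det dg_k=1$, so $g_\infty$ is a degree-one Riemannian covering, hence $g_\infty\in\Isom_+(M)$. (Together with $\|\imath\circ g-\imath\circ f\|_{W^{1,p}}$ being small, this reproves the Kupferman-Maor-Shachar stability theorem, with the orientation built in.)

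\noindent\textbf{Step 4: linearisation and conclusion.} Since $\Isom_+(M)$ is a compact Lie group whose Lie algebra is the space of Killing fields of $M$, the implicit function theorem (in $C^{1,\beta}$ with $\beta<\alpha$, a topology in which $g$ is close to $\phi_0$ by the uniform $C^{1,\alpha}$ bound and Step~3) lets me write $g=\phi\circ\exp_{(\cdot)}(V)$ with $\phi\in\Isom_+(M)$ near $\phi_0$ and $V$ a small $C^{1,\alpha}$ vector field that is $L^2$-orthogonal to the Killing fields. Using $(\exp_{(\cdot)}V)^\ast h-h=2\,\sym\nabla V+\mathcal R(V,\nabla V)$ with $|\mathcal R|\le C(|V|+|\nabla V|)^2$, and the comparability $\dist(dg,SO(M))\sim|g^\ast h-h|$ valid where $dg$ is close to $SO(M)$, one obtains
\[
\|\sym\nabla V\|_{L^p}\le C\,E_p(g)^{1/p}+C\bigl(\|V\|_{L^\infty}+\|\nabla V\|_{L^\infty}\bigr)\,\|V\|_{W^{1,p}} .
\]
The Riemannian $L^p$ Korn inequality gives $\|V\|_{W^{1,p}}\le C\,\|\sym\nabla V\|_{L^p}$ (its kernel being the Killing fields, to which $V$ is orthogonal); since $g$ is $C^1$-close to an isometry, the factor $\|V\|_{L^\infty}+\|\nabla V\|_{L^\infty}$ is so small that the quadratic term is absorbed, leaving $\|V\|_{W^{1,p}}\le C\,E_p(g)^{1/p}\le C\,E_p(f)^{1/p}$. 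Composing with the fixed smooth maps $\imath\circ\phi$ and $\exp$ (with $V$ small) then gives $\|\imath\circ g-\imath\circ\phi\|_{W^{1,p}}=\|\imath\circ\phi\circ\exp_{(\cdot)}(V)-\imath\circ\phi\|_{W^{1,p}}\le C\,\|V\|_{W^{1,p}}\le C\,E_p(f)^{1/p}$, and together with Step~2 the left-hand side of \eqref{eq:optimal_riemannian_rigidity} is at most $2^{p-1}\bigl(\|\imath\circ g-\imath\circ f\|_{W^{1,p}}^p+\|\imath\circ g-\imath\circ\phi\|_{W^{1,p}}^p\bigr)\le C\,E_p(f)$, which proves \eqref{eq:optimal_riemannian_rigidity}.
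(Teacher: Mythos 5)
Your overall strategy is the same as the paper's (almost-harmonicity via the Kupferman--Maor--Shachar Piola identity, harmonic map heat flow to get a uniformly $C^{1,\alpha}$ map at $W^{1,p}$-distance $\le CE_p(f)^{1/p}$ from $f$, compactness to land $C^1$-close to an isometry, then linearization of the metric deficit with Killing fields and a Riemannian Korn inequality), and Steps 1, 3 and 4 are essentially the paper's arguments (your slice/implicit-function-theorem device to make $V$ exactly $L^2$-orthogonal to the Killing fields is a mild variant of the paper's minimization of $\|\Psi_K\circ X\|_{L^2}$ over the group orbit, which only yields approximate orthogonality but suffices). The genuine gaps are in Step 2. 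First, the Lipschitz truncation is not an optional fallback for $p<2$: for $n\ge 3$ the passage from the Piola identity to the almost-harmonicity bound $\|\tau(u)\|_{W^{-1,p}}\le C\|\dist(du,SO(M))\|_{L^p}$ rests on the pointwise inequality $|dF-\Cof dF|\le C_\Lambda\,\dist(dF,SO)$, which is valid only under an a priori bound $|dF|\le\Lambda$ (the cofactor is of degree $n-1$, so for a raw $W^{1,p}$ map the residual is not controlled linearly by the energy). Hence a truncation-type approximation of $f$ with $|d\tilde f|\le\Lambda$ and $d_{1,p}(f,\tilde f)\le CE_p(f)^{1/p}$ is needed for \emph{all} $p$ and $n\ge3$, before either the Piola identity or your $\eps$-regularity argument can be used quantitatively; with it, the heavy Struwe-type machinery is also unnecessary, since the flow with $W^{1,\infty}$ data is subcritical.

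Second, and more importantly, the central quantitative claim of Step 2 --- that $\|\imath\circ g-\imath\circ f\|_{W^{1,p}}^p\le C\,E_p(f)$ with the \emph{linear} rate --- is asserted rather than proved, and the route you indicate does not by itself deliver it. Integrating $\partial_t(\imath\circ u)=d\imath(\tau(u))$ in time and using $\|\tau(u(t))\|_{W^{-1,p}}\le C\,\|\dist(du(t),SO(M))\|_{L^p}$ gives a bound on $\imath\circ g-\imath\circ f$ that is one derivative too weak (a $W^{-1,p}$-type bound, not $W^{1,p}$), and it moreover presupposes control of $\|\dist(du(t),SO(M))\|_{L^p}$ for $t>0$, which you in turn want to deduce from an unproven ``differential inequality'' for $t\mapsto E_p(u(t))$. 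Closing this loop is exactly the content of the paper's Theorem~\ref{th:heat_flow_close_to_Phi}: one writes the residual of the initial datum as $\delta h+h'$ with $\|h\|_{L^p}+\|h'\|_{L^p}\le CE_p(f)^{1/p}$, solves $\Delta_g\Psi=\delta h$ with $\|\Psi\|_{W^{1,p}}\le C\|h\|_{L^p}$ so that in the Duhamel formula $\int_0^tS(t-s)\,\delta h\,ds=S(t)\Psi-\Psi$ is bounded in $W^{1,p}$, and absorbs the drift of the quadratic term $\mathbb A(du,du)$ around the initial datum through a Gronwall-type integro-differential inequality on a time interval of order one. You correctly identify this as ``the main obstacle'' and correctly name the Piola identity as the key input, but no argument in the proposal substitutes for this mechanism; since the optimal linear scaling is precisely what the theorem adds over the known stability results, the proof is incomplete at its decisive point.
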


Here by a compact manifold we mean a manifold which is compact as topological space
(some authors use the expression 'compact manifold without boundary' for such manifolds). The integrand on the right hand side of   \eqref{eq:optimal_riemannian_rigidity}
is defined for almost every $q$ in $M$ and given by
$$  \dist(df, SO(M))(q) := \dist(df(q), SO(T_q M, T_{f(q)} M)),$$
where the tangent spaces $T_q M$ and $T_{f(q)}$ are viewed as oriented
Euclidean spaces with the Riemannian scalar product, $SO(V,W)$ denotes the
set of linear and orientation preserving maps between oriented Euclidean spaces $V$ and $W$
and the distance is taken with respect to the Hilbert-Schmidt norm on $W \otimes V^*$, see
Section~\ref{se:linear_algebra}  below for a detailed description.
 By an orientation-preserving isometry, we mean a  diffeomorphism that preserves distance and orientation,
 see  Lemma~\ref{le:isometries}  below for an  equivalent definition and the fact that isometries are smooth.

 For the special case that $M$ is the round sphere $S^n$, embedded in $\R^{n+1}$, the estimate
 \eqref{eq:optimal_riemannian_rigidity}
 was recently shown by Alpern, Kupferman, and Maor \cite{Alpern-Kupferman-Maor23}
 by a clever reduction to the Euclidean estimate. Indeed, if $f: S^{n} \to S^n$ and
 $\tilde f$ denotes the one-homogeneous extension of $f$ to the annulus
  $B_2 \setminus {\overline B_{\frac12}}\subset\R^{n+1}$, then $\dist(d\tilde f, SO(n+1))(x)$ is comparable to $
  \dist(df, SO(S^n, S^n))(x/|x|)$ and the estimate follows from Theorem~\ref{th:euclidean_rigidity}  and the fact that the action of
  $SO(n+1)$ on $\R^{n+1}$ preserves $S^n$.
   Earlier,   Chen, Li, and Slemrod proved a Riemannian rigidity estimate \cite[Theorem 3.2]{Chen-Li-Slemrod22} for maps into $S^n$  where the constant
   $C$ depends on higher norms of $f$.

 \medskip

It is possible to state  an intrinsic version of the estimate
\eqref{eq:optimal_riemannian_rigidity}  which does not involve the isometric embedding $\imath: M \to \R^d$,
for example, by using the Sasaki distance  on the tangent bundle
$TM$,
but we prefer to view $M$ as embedded into $\R^d$  to emphasize the analogy with the Euclidean estimate.
Also various generalizations  of
 Theorem~\ref{th:optimal_riemannian_rigidity} are possible, but here we focus
on the simplest case to avoid technicalities and to emphasize the
strong similarity between  the arguments for the Euclidean and the Riemannian case.

\subsection{Strategy of proof}

We first recall the key steps in the proof of the Euclidean rigidity result, Theorem~\ref{th:euclidean_rigidity}.
It is easy to see that it suffices to show the result for maps $f$ for  which the right hand side of
\eqref{eq:euclidean_rigidity} is small.
In the following, we refer to the right hand side of \eqref{eq:euclidean_rigidity} or
\eqref{eq:optimal_riemannian_rigidity} as the energy of the map.
The proof consists of three steps:
\begin{enumerate}
\item Lipschitz approximation: \quad By a 'truncation of gradients'  argument, see
  \cite{liu1977luzin}, \cite{AcerbiFusco1988approximation}, \cite{EvansGariepy1992},
one easily sees that it suffices to show the result under the additional condition that $|df| \le L$
almost everywhere where $L$ is a fixed constant, depending only on the dimension $n$.
\item Almost harmonicity and compactness:   \quad It follow from the Piola identity $\Div \cof df = 0$
and the pointwise estimate $| F - \cof F| \le C \dist(F, SO(n))$  for all $F$ with $|F| \le L$
that low energy maps $f$ are almost harmonic. Thus $f$ can be written as the  sum of a harmonic part
$u$ and a map $w$ which is controlled in $W^{1,2}$ by the energy of $f$.
The harmonic part enjoys additional regularity and compactness properties,  and it is easy to see
that $u$ is $C^1$ close to an affine isometry $\psi$, locally, i.e. on open subsets for which the closure
is contained in $\Omega$. Since the energy is invariant under
left composition by $\psi^{-1}$,  we can assume without loss of generality $u$ is locally $C^1$  close to the identity.
\item Linearization of the metric deficit: \quad If
$F$ is close to the identity matrix  then $\dist(F, SO(n))$ is comparable to the metric deficit
$F^T F - \Id$. The linearization of the equation $ (df)^T df - \Id = h$
is given by  $(df) + (df)^T = h$. The kernel of the elliptic operator $\mathcal L f = (df) + (df)^T$ consists of
the finite dimensional space of infinitesimal affine  isometries $x \mapsto W x + a$ with $W \in
\mathrm{skw}(n)$, the space of skew-symmetric matrices. Since $\mathrm{skw}(n)$ is the Lie algebra
of the Lie group $SO(n)$, an easy perturbation argument together with Korn's inequality
gives the estimate for $df$, up to the action of $SO(n)$.
\end{enumerate}

Strictly speaking,  the above outline leads to  interior estimates in the Euclidean setting. An additional covering argument is used to get estimates up to the boundary. No such argument is needed for Theorem~\ref{th:optimal_riemannian_rigidity}
since we work on a compact manifold.

\medskip

The strategy for the proof of the Riemannian rigidity result
in Theorem~\ref{th:optimal_riemannian_rigidity}
is the same.
For the Lipschitz approximation,
one has to ensure that the approximation of $\imath \circ f$ still maps to $\imath(M)$.
This can be easily achieved by a suitable projection, see  \cite[pp. 390--392]{KupfermanMaorShachar2019} or
Section~ \ref{se:Lip_approximation} below.

For the almost harmonicity and compactness argument one can replace the Euclidean
Piola identity  $\Div \cof df = 0$ by  the  extrinsic Riemannian Piola identity of
\cite{KupfermanMaorShachar2019}, see Theorem~\ref{th:extrinsic_piola} below.
This shows that low energy maps are almost harmonic maps.
We could use a local approximation by  harmonic maps to get local compactness.
For variety,  and in order to get directly a global result,  we use instead the extrinsic harmonic map heat flow
$t \mapsto F_t$ with initial datum $F_0 = \imath \circ f$. Then it is easy to see that for Lipschitz $F_0$
and  a suitable time
$\tau$ of order $1$ the maps $F_\tau$ satisfy a uniform $C^{1,\alpha}$ bound and that
the difference $F_\tau - F_0$ is controlled in $W^{1,p}$ by the energy, i.e., the right hand side of
\eqref{eq:optimal_riemannian_rigidity},  see  Theorem~\ref{th:heat_flow_close_to_Phi}   below.

The linearization argument for maps that are close to the identity in $C^1$ is very similar.
The metric deficit is now given by $f^*g - g$. If we write $f = \exp X$ then the linearization of the metric deficit  is given by the linear operator
$\mathcal L X := \nabla X + (\nabla X)^T$, where $\nabla$ denotes the covariant derivative corresponding
to the Levi-Civita connection.
The kernel   $\mathcal K$ of $\mathcal L$ is again finite dimensional and  consists
of  the so-called  Killing fields. By a classical result, $\mathcal K$ can be identified with the Lie
algebra of the Lie group $\mathrm{Isom}_+(M)$ of orientation preserving isometries.
Then we can conclude as before by elliptic estimates for $\mathcal L$ and  a perturbation argument,
see Section~\ref{se:linearization_metric_deficit} below, and in particular
Theorem~\ref{th:C1_rigidity_new} and Corollary~\ref{co:C1_rigidity_maps}.

\medskip

Conceptually, there are two elliptic systems at play. First, the harmonic map equation, for which a good
regularity exists as soon as the local oscillation is small, but for which the linearized kernel is in general too large. Secondly, the equation $f^*g - g = h$ for the metric deficit. This equation has the optimal linearized
kernel, but it is only useful for maps $f$ which are already  $C^1$  close to the identity, or an isometry.
Roughly speaking we use the harmonic map equation to show that we are $C^1$ close to an isometry
(but not with the optimal rate) and then use the metric deficit equation to conclude.

\section{Preliminaries and notation}
\label{secpreliminaries}
Throughout this paper, we assume that
$$ \text{$(M,g)$ is a smooth, connected, compact $n$-dimensional Riemannian manifold,}$$
unless explicitly stated otherwise.
We usually write just $M$ instead of $(M,g)$.
We fix an isometric embedding
$\imath: M \to \R^d$ and we denote by $d(\cdot,\cdot)$ the inner metric
of $M$, i.e. $d(q,q')$ is the length of the shortest geodesic connecting $q$ and $q'$.
We use standard notation for Riemannian manifolds, see, for example,
\cite{Cheeger-Ebin,Kobayashi-Nomizu63,Kobayashi-Nomizu69,ONeill83}.
In particular, we  use the Levi-Civita connection on $M$ and we denote the corresponding covariant
derivative by $\nabla$.
We use the summation convention, i.e., we sum over repeated indices, unless noted otherwise.
We identify $T_p\imath(M)$ with a subspace of $\R^d$
and $T_p\R^d$ with $\R^d$.

\medskip

In this section we first quickly review some linear algebra and the definition of the second fundamental
form of $\imath(M)$, to give a precise definition of the quantities which appear in the statement of the main
result and in the  Riemannian Piola identity. Then we recall the definition of Sobolev spaces of maps
with values in a manifold and a notion of distance of such Sobolev maps.

\subsection{Some linear algebra}  \label{se:linear_algebra}
Let $V$ be an $n$-dimensional vector space, let $\alpha$ be a non-trivial $n$-form on $V$ and let $A: V \to V$ be a linear map. Then one  defines the determinant $\Det A$  of $A$ by $A^*\alpha = \Det A \, \, \alpha$. The definition is independent of the choice of $\alpha$ since the $n$-forms on $V$ form  a one-dimensional vector space. The map $A \to \Det A$ is a polynomial and we define the trace of $A$ by
\begin{equation} \Tr A := \frac{d}{dt}|_{t=0} \Det(\Id + t A).
\end{equation}
It is easy to see that for any matrix $M_A$ which represents $A$ with respect to a basis of $V$ we have
\begin{equation}  \label{eq:det_tr_V}
\Det A = \det M_A, \quad \Tr A = \tr M_A
\end{equation}
where $\det$ and $\tr$ denote the usual determinant and trace for $n \times n$ matrices, or, equivalently,
for maps from $\R^n$ to $\R^n$.
If $V$ has an inner product, one can also define the transpose $A^T$ and the cofactor operator $\Cof A$.
We now extend these notions for maps between two oriented  inner product spaces.

Let $V$ be an $n$-dimensional inner product space. We make $V$ an oriented space by fixing an $n$-form
$\alpha_0 \in \Lambda^n V$ with $\alpha_0 \ne 0$. We say that  a basis $v_1, \ldots, v_n$ of $V$ is positively
oriented if  $\alpha_0(v_1, \ldots, v_n) > 0$. Let $\tilde v_1, \ldots, \tilde v_n$ be a positively oriented orthonormal
basis of $V$. Then there exists a unique $n$-form $\alpha \in \Lambda^n V$ such that
$\alpha(\tilde v_1, \ldots, \tilde v_n) = 1$. It is easy to see that $\alpha(\hat v_1, \ldots, \hat v_n) = 1$ for every
other positively oriented orthonormal basis  $\hat v_1, \ldots, \hat  v_n$. We call $\alpha$ the volume form
of the oriented inner product space $V$ and denote it by $\vol_V$.

The inner product on $V$ defines
canonical inner products
on the dual space of one forms $\Lambda^1 V = V^*$, the space of $k$-forms $\Lambda^k V$, and
 the space of $(r,s)$-tensor $V^{\otimes r} \otimes (V^*)^{\otimes s}$. Given another oriented inner product space $W$ we
 also get a canonical  inner  product on spaces of mixed tensors like  $W^{\otimes r} \otimes (V^*)^{\otimes s}$. We will
 often deal with the space of linear maps
 $$\lin(V,W) \simeq W \otimes V^*$$
  and to fix notation, we spell out some details in this setting.

 \begin{definition} Let $V,W$ be $n$-dimensional, oriented, inner product spaces with volume forms $\vol_V$ and
 $\vol_W$, respectively. For $A, B \in \lin(V,W)$ we define the transpose  map $A^T \in \lin(W,V)$,  the scalar product, and
 the (Hilbert-Schmidt) norm  by
 \begin{equation}   \langle v, A^T w \rangle_V = \langle Av, w\rangle_W \quad \text{for all $v \in V$, $w \in W$,}
 \end{equation}
 \begin{equation} \langle A, B \rangle_{V,W} := \Tr A^T B, \quad |A|_{V,W} := \sqrt{\langle A,  A\rangle_{V,W}}.
 \end{equation}
 We define $\Det A \in \R$ by
 \begin{equation}
 A^* \vol_W = \Det A \, \vol_V.
 \end{equation}
Then $t \mapsto \Det (A + tB)$ is a polynomial and we define the cofactor operator $\Cof A \in \lin(V,W)$ by
\begin{equation}  \label{eq:define_Cof}
\langle \Cof A, B \rangle_{V,W} := \frac{d}{dt}|_{t=0} \Det(A + tB).
\end{equation}
We define the set $SO(V,W)$ of orientation-preserving isometries by
\begin{equation}
SO(V, W) := \{ A \in \lin(V,W) : A^T A = \Id_V, \, \Det A = 1\}
\end{equation}
and  set
\begin{equation}   \label{eq:dist_SO}
\dist(A, SO(V,W)) := \min \{ |A - B|_{V,W} : B \in SO(V,W) \}.
\end{equation}
\end{definition}

The intrinsically  defined quantities $\Det A$ and $\Cof A$ can be computed from the matrix $M_A$,
obtained by expressing  $A$ in oriented
orthonormal bases of $V$ and $W$.

\begin{lemma}\label{lemmalinearalgebra1}
Let $v_1, \ldots, v_n$ and $w_1, \ldots, w_n$ be positively oriented orthonormal bases
of $V$ and $W$, respectively. Let $A, B \in \Lin(V,W)$ and let  $M_A$ and $M_B$ be the
matrices representing $A$ and $B$ with respect to these bases, i.e. $A v_\beta = (M_A)^\alpha_\beta w_\alpha$ and  $B v_\beta = (M_B)^\alpha_\beta w_\alpha$.  Then
\begin{eqnarray}
M_{A^T}  &=& (M_A)^T,  \label{eq:MA_transpose}\\
\langle A, B \rangle_{V,W} &=&  \langle A v_\beta, B v_\beta \rangle  =
(M_A)^\alpha_\beta (M_B)^\alpha_\beta = \langle M_A, M_B \rangle_{\R^n, \R^n} \, ,
\label{eq:MA_scalarproduct}\\
\Det A &=& \det M_A,   \label{eq:MA_det} \\
(\Cof A)  v_\beta &=& (\cof M_A)^\alpha_\beta w_\alpha.  \label{eq:MA_cof}
\end{eqnarray}
\end{lemma}

\begin{proof}   \eqref{eq:MA_transpose}: Since the bases are orthonormal
we have $(M_{A^T})^\alpha_\beta  = \langle v_\alpha, A^T w_\beta\rangle = \langle A v_\alpha, w_\beta\rangle = (M_A)^\beta_\alpha$.\\
\eqref{eq:MA_scalarproduct}: We have  $(A^T B) v_\beta = \langle (A^T B) v_\beta, v_\alpha\rangle v_\alpha =
\langle B v_\beta, A v_\alpha\rangle v_\alpha$. Hence the first identity follows from the second identity in
  \eqref{eq:det_tr_V}, applied to $A^T B$. The second identity follows from $\langle w, w'\rangle_W =
  \langle w, w_\alpha\rangle_W  \langle w', w_\alpha\rangle_W$ with $w = B v_\alpha$, $w' = A v_\alpha$. \\
     \eqref{eq:MA_det}: We have $\vol_V(v_1, \ldots, v_n) = 1$ and writing $M := M_A$ we get\\
     $(A^*\vol_W)(v_1, \ldots, v_n)
      =  M^{\alpha_1}_1 \ldots M^{\alpha_n}_n \alpha_W(w_{\alpha_1}, \ldots, w_{\alpha_n})
      = \det M$,  since\\
       $\vol_W(w_1, \ldots, w_n) = 1$ and $\vol_W$ is alternating.\\
      \eqref{eq:MA_cof}: This follows from   \eqref{eq:MA_det}, \eqref{eq:MA_scalarproduct}, and the identity
      $\frac{d}{dt}|_{t=0} \det (M + tM') =\langle\cof M, M'\rangle_{\R^n, \R^n}$ for matrices $M$ and $M'$.
\end{proof}
It follows that $\Cof$ and $\Det$ inherit the properties of $\cof$ and $\det$ on matrices. In particular,
\begin{eqnarray}   \label{eq:props_Det}
\Det(AB) &=& \Det A \, \Det B,  \quad  \Det A^T = \Det A, \\
\Cof(AB) & = &  \Cof A \, \Cof B, \quad A^T \Cof A = \Id \, \Det A  \label{eq:props_Cof}, \\
 \label{eq:CofF_equals_F}
\Cof F &=& F \quad \text{if $F \in SO(V,W)$.}
\end{eqnarray}

We also have
 \begin{equation}   \label{eq:operatornorm_hilbert_schmidt}
 |A v|_W \le |A|_{V,W} \, |v|_V    \quad \text{ for all $A \in \lin(V,W)$ and $v \in V$.}
 \end{equation}
Indeed, if $v \ne 0$ we can choose a positively oriented orthonormal basis with $v_1 = v/ |v|$
and apply the first identity in \eqref{eq:MA_scalarproduct} with $B=A$.

\medskip

When the spaces $V$, $W$ and their scalar products are clear from the context we will often simply write
$\langle A, B \rangle$ instead of $\langle A, B \rangle_{V,W}$ and similarly for the norm.
Since $\Lin(V,W) \simeq W \otimes V^*$ we also also use the notation
\begin{equation} \label{eq:notation_scalar_product}
\langle A, B \rangle_{W \otimes V^*} := \langle A,B \rangle_{V,W}.
\end{equation}

\medskip

If $(M,g)$ and $(N,h)$ are Riemannian manifolds,  we will  use the above
definition for maps $A \in \lin(T_q M, T_{q'} N) \simeq T_{q'} N \otimes T_q^* M$
where the tangent spaces are equipped with the inner product given by the Riemannian metrics $g(q)$ and $h(q')$
and their orientation is induced by the orientation of $M$ and $N$.
For a  (weakly) differentiable map $f: U \subset M \to N$ we use the shorthand notation
\begin{equation} \dist(df, SO(g,h))(q) := \dist(df(q), SO(T_q M, T_{f(q)} N))
\end{equation} where the tangent spaces are equipped with the Riemannian metrics $g(q)$ and $h(f(q))$.
If $(N,h)=(M,g)$ we write
\begin{equation}  \dist(df,SO(M)) := \dist(df, SO(g,h)).
\end{equation}

\medskip

In \cite[Definition 1 and 2]{KupfermanMaorShachar2019} $\Det A$ and the cofactor operator $\Cof A$ are defined using the
Hodge-* operators in $V$ and $W$. It follows from  \cite[Proposition 2]{KupfermanMaorShachar2019} that our definition of
 $\Det A$ agrees with
theirs. Furthermore, it follows from the matrix representations of $A^T$, $\Det A$ and $\Cof A$ in
\cite[Lemma 4]{KupfermanMaorShachar2019} that our definition of $\Cof A$ agrees with theirs.

\subsection{Second fundamental form}
\label{secsecondfundform}
Recall that  we fix an isometric embedding $\imath: M \to \R^d$, where $\R^d$ is equipped with the
standard Euclidean metric $\mathfrak e$.
For $q \in \imath(M)$ we denote by $N_q\imath (M)$ the  normal space at $q$, i.e., the subspace of $\R^d$ perpendicular
to $T_q\imath (M)$.
We write $w^\perp$ for the  projection of a vector $w \in \R^d$ onto the space $N_q\imath(M)$.

Let $Y : \imath(M) \to T\imath(M)$ be a tangential  vector field, i.e., $Y(p) \in T_p\imath(M)$ for all $p$,
and let $v \in  T_q\imath(M)$. Let $d_v Y$ denote the directional derivative
in $\R^d$ (this agrees with the covariant derivative $\nabla_v$  in $\R^d$).
Using a local
frame of $T\imath(M)$,  it is easy
 to see that $(d_v Y)^\perp$, the normal component at $q$, depends only on $Y(q)$.
We define
the second fundamental form $A(q) : T_q\imath(M) \times T_q\imath(M) \to N_q\imath(M)$
 by
\begin{equation}
\label{eq:definition_second_fundamental}
 A(q)(v, w) := - (d_v Y)^\perp,  \quad \text{if $Y(q) = w$.}
\end{equation}
Here we choose the sign of $A$ consistent with \cite[p.\ 381]{KupfermanMaorShachar2019}, 
\cite[p.\ 216]{Struwe_variational_methods2}, \cite[p.\ 2]{Shatah-Struwe98},  or
\cite[eqn.\ (1.8)]{Lin-Wang08}, 
see  \eqref{eq:2nd_form_normal}.
 In \cite[Chapter VII]{Kobayashi-Nomizu69}  or in \cite[Item (2.21)]{EellsLemaire1978report}
the opposite sign is chosen.
It is easy to see that $A$ is symmetric,
see \cite{Kobayashi-Nomizu69}, Chapter VII, Proposition 3.2. 
The second fundamental form can equivalently be expressed in terms of the derivative of a normal field
$\nu: \imath(M) \to N\imath(M)$, i.e., a field with  $\nu(p) \in N_p\imath(M)$ for all $p$. Indeed,  differentiation of the
identity $\langle Y, \nu\rangle = 0$ for a tangential vector field $Y$  in direction $v \in T_q\imath(M)$ gives
$\langle Y(q),d_v \nu(q) \rangle = - \langle (d_v Y)(q), \nu(q) \rangle = \langle A(q)(v, Y(q)), \nu\rangle$.
Choosing a local orthonormal frame $\nu_1, \ldots, \nu_{d-n}$ of the normal bundle,  we get
\begin{equation}  \label{eq:2nd_form_normal}
A(q)(v,w) = \langle w, d_v \nu_i(q)\rangle \nu_i(q)  \quad \text{for $v,w \in T_q\imath(M)$.}
\end{equation}

The second fundamental form is also closely related to the second derivative of the  smooth closest point
  projection $\pi: \mathcal O \subset \R^d \to \imath(M)$, defined in a small neighborhood $\mathcal O$ of $\imath(M)$. Indeed, for $q \in \imath(M)$, the differential
  $d\pi(q)$ is the
  orthogonal projection from $\R	^d$ to $T_{q}\imath(M)$.
  Thus differentiating the identity $\langle d\pi(\gamma(t)) z-z,d\pi(\gamma(t))  w\rangle =0$
  for $z \in \R^d$, $w \in  T_{q}\imath(M)$ and
    a curve $\gamma: (-\delta, \delta) \to \imath(M)$ with $\gamma'(0) = v \in T_{q}\imath(M)$ and applying the definition of $A$ with $Y(t) = d\pi(\gamma(t)) w$ we get
  \begin{align}  \label{eq:second_derivative_pi}
  &  \langle d^2\pi(q)(v,z), w\rangle = \langle d\pi(q)z-z, d^2\pi(q)(v,w)\rangle =-  \langle
 z, A(q)(v,w)\rangle  \\
  &  \text{for all $v,w \in T_{q}\imath(M)$ and all $z \in \R^d$.}  \nonumber
  \end{align}
 We extend $A$ to a  symmetric map on linear maps from $T_p M$ to $T_{\imath(p)}\imath(M)$ as follows.
 Define
  $\mathbb A:  (T_{\imath(p)} \imath(M)  \otimes T^*_p M) \times  (T_{\imath(p)} \imath(M)  \otimes T^*_p M)
   \to N_{\imath(p)} \imath(M)$ by
   \begin{equation} \label{eq:definition_second_endos}
 \mathbb A(\imath(p))(X, Y) := \Tr A(X,Y) := g^{\alpha \beta} A(X e_\alpha, Y e_\beta)
\end{equation}
where $e_1, \ldots, e_n$ is a basis of $T_p M$ and $g^{\alpha \beta}$ is the inverse of
$g_{\alpha \beta} := g(e_\alpha, e_\beta)$.
In \cite{KupfermanMaorShachar2019} also the extended second fundamental form is denoted by $A$.

\bigskip

\subsection{Sobolev spaces on manifolds}\label{secsobolev}
The Sobolev space $W^{1,p}(M)$ of maps $u: M \to \R$ can be defined using local charts.
Equivalently, $u \in W^{1,p}(M)$ if there
 exists an $L^p$ section $\gamma$ in the cotangent bundle
$T^*M$ such that,
 for every $C^1$ section $\varphi$ of $T^*M$,
\begin{equation}  \label{eq:weak_derivative_M}  \int_M \langle \gamma, \varphi  \rangle \, d\vol_M = - \int_M u \, \delta \varphi \, d\vol_M,
\end{equation}
where $\delta$ is the codifferential. We call $\gamma$ the weak differential of $u$ and denote it by $du$.

 The Sobolev space $W^{1,p}(M; \R^d)$ and the weak differential of a map
 $u \in W^{1,p}(M; \R^d)$ are defined componentwise.  If $N$ is a smooth manifold and $\imath:N\to\R^d$ is an isometric embedding, we define
 \begin{align}\label{eqdefsobolevMN}
& \,   W^{1,p}(M; N)
 :=   \, \{ f:M\to N: \imath\circ f \in W^{1,p}(M; \R^d)  \}.   \end{align}
 If $f \in W^{1,p}(M; N)$,  then it is easy to see that for almost every $q \in M$ the weak differential $df(q)$ is a map
 from $T_q M$ to $T_{f(q)}N $. One can use, for example, the fact that $\imath\circ f$ is approximately differentiable
 almost everywhere (as a map with values in $\R^d$) and that the approximate differential and the weak differential agree
 almost everywhere.
 Thus, for almost every $q \in M$ one can define $df(q) \in T_{f(q)}N \otimes T^*_qM$ as the unique
 element of $T_{f(q)}N \otimes T^*_qM$ such that $d\imath(f(q)) df(q) = d(\imath \circ f)(q)$.

  For  an intrinsic definition of  $W^{1,p}(M; N)$, see  \cite{ConventVanSchaftingen2016intrinsic}. One can also define
 $ W^{1,p}(M; N)$ using the theory of metric-valued Sobolev spaces, see, for example, \cite{HeinonenKoskelaShanmugalingamTyson2015Buch}.

 \medskip

For $f$, $g \in W^{1,p}(M;M)$ we define
a distance by
\begin{equation}  \label{eq:dist_W1p(M,M)}
d_{1,p}(f,g) := \| \imath \circ  f- \imath \circ g\|_{W^{1,p}}.
\end{equation}
The following result shows that this distance behaves well under the action of isometries.

\begin{lemma}    \label{le:distance_TM_via_imath}
There exists a constant $C$ with the following properties.
\begin{enumerate}
\item\label{le:distance_TM_via_imathp}
For $p,q \in M$,
\begin{equation} \label{eq:equivalence_extrinsic_distance}
C^{-1} d(p,q) \le |\imath(p) - \imath(q)|_{\R^d} \le d(p,q).
\end{equation}
\item\label{le:distance_TM_via_imathder}
For $v \in T_p M$ and $w \in T_q M$ set
\begin{equation} \label{eq:distance_tangent_bundle}
d_{TM}(v,w) :=  |\imath(p) - \imath(q)|_{\R^d}    + |d\imath(v)  - d\imath(w)|_{\R^d}.
\end{equation}
Then, for every isometry $\phi: M \to M$,
\begin{equation}  \label{eq:quasiinvariant_dTM}
     C_{v,w}^{-1} d_{TM}(v,w)               \le    d_{TM}(d\phi(v),d\phi(w))  \le  C_{v,w}  d_{TM}(v,w)
\end{equation}
with
\begin{equation}
C_{v, w} := C\big( 1 + \min( |d\imath(v)|,  |d\imath(w)|)  \big).
\end{equation}
\item\label{le:distance_TM_via_imathw1p}
If $p\in[1,\infty)$, $f \in W^{1,p}(M;M)$ and $g \in W^{1,\infty}(M;M)$ with $\| d(\imath \circ g)\|_{L^\infty} \le L$,
then, for every isometry $\psi: M \to M$,
\begin{equation}  \label{eq:dfg_quasiinvariant_isometry}
d_{1,p}(\psi \circ f, \psi \circ g) \le C(1+L) d_{1,p}(f,g),
\end{equation}
where $d_{1,p}(f,g)$ is defined by  \eqref{eq:dist_W1p(M,M)}.
\end{enumerate}
\end{lemma}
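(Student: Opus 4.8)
The plan is to prove the three items of Lemma~\ref{le:distance_TM_via_imath} in order, each time exploiting compactness of $M$ together with smoothness of the fixed embedding $\imath$, and using that isometries act through a compact group.

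\textbf{Proof of (i).} The upper bound $|\imath(p)-\imath(q)|_{\R^d}\le d(p,q)$ is immediate: $\imath$ is an isometric embedding, so for a shortest geodesic $\gamma$ joining $p$ and $q$ the curve $\imath\circ\gamma$ has Euclidean length equal to $d(p,q)$, and the chord is no longer than the curve. For the lower bound, I would argue by compactness. Fix $\delta>0$ such that the $\delta$-tubular neighborhood $\mathcal O$ of $\imath(M)$ admits the smooth closest-point projection $\pi:\mathcal O\to\imath(M)$. If $|\imath(p)-\imath(q)|_{\R^d}$ is small (say $<\delta/2$), then the segment $[\imath(p),\imath(q)]$ lies in $\mathcal O$, so $\pi$ restricted to it is a $C^1$ curve on $\imath(M)$ joining $\imath(p)$ to $\imath(q)$ whose Euclidean length is bounded by $\Lip(\pi|_{\mathcal O})\,|\imath(p)-\imath(q)|_{\R^d}$; since $\imath$ is an isometry this gives $d(p,q)\le C|\imath(p)-\imath(q)|_{\R^d}$ for $p,q$ nearby. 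For pairs with $|\imath(p)-\imath(q)|_{\R^d}\ge\delta/2$, the ratio $d(p,q)/|\imath(p)-\imath(q)|_{\R^d}$ is a continuous function on a compact set (namely $\{(p,q): |\imath(p)-\imath(q)|\ge\delta/2\}\subset M\times M$) and is everywhere finite since $\imath$ is injective, hence bounded. Taking the larger of the two constants finishes (i).

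\textbf{Proof of (ii).} This is the technical heart and the step I expect to be the main obstacle, because of the asymmetric factor $C_{v,w}=C(1+\min(|d\imath(v)|,|d\imath(w)|))$. Since $\phi$ is an isometry of $M$, it is smooth by Lemma~\ref{le:isometries}, and $d\phi$ is an isometry $T_pM\to T_{\phi(p)}M$, so $|d\imath(d\phi(v))|_{\R^d}=|v|_{TM}=|d\imath(v)|_{\R^d}$; in particular the two ``$\min$'' factors on the two sides of \eqref{eq:quasiinvariant_dTM} coincide, so it suffices to prove just one of the inequalities in \eqref{eq:quasiinvariant_dTM} (the reverse follows by applying it to $\phi^{-1}$, which is again an isometry, with the roles of $(v,w)$ and $(d\phi v,d\phi w)$ swapped). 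I would express everything through the embedding: write $p=\pi_{TM}(v)$, $q=\pi_{TM}(w)$ for the footpoints, so $d_{TM}(v,w)=|\imath(p)-\imath(q)|_{\R^d}+|d\imath_p(v)-d\imath_q(w)|_{\R^d}$. The map $\Phi:=\imath\circ\phi\circ\imath^{-1}:\imath(M)\to\imath(M)$ extends (via $\pi$) to a $C^1$ map on a neighborhood, and $d(\imath\circ\phi)=d\Phi\circ d\imath$. The point of the estimate is the elementary bound, for vectors $\xi\in T_p\imath(M)$, $\zeta\in T_q\imath(M)$,
\begin{equation*}
|d\Phi(p)\xi-d\Phi(q)\zeta|_{\R^d}\le |d\Phi(p)|\,|\xi-\zeta|_{\R^d}+|d\Phi(p)-d\Phi(q)|\,|\zeta|_{\R^d},
\end{equation*}
together with $|d\Phi(p)-d\Phi(q)|\le \|d^2\Phi\|_{L^\infty}\,|\imath(p)-\imath(q)|_{\R^d}$ (using part (i) to convert the intrinsic distance implicit in a $C^1$ bound on $\imath(M)$ into a Euclidean one) and the uniform bound $\|d\Phi\|_{C^1(\imath(M))}\le C$, which holds for a \emph{single} $\phi$ trivially and uniformly over the isometry group since $\Isom_+(M)$ is compact and acts smoothly. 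Plugging $\xi=d\imath_p(v)$, $\zeta=d\imath_q(w)$ and noting $|\xi|_{\R^d}=|d\imath(v)|_{\R^d}$, and treating the footpoint term $|\Phi(\imath(p))-\Phi(\imath(q))|_{\R^d}\le C|\imath(p)-\imath(q)|_{\R^d}$ the same way, yields $d_{TM}(d\phi v,d\phi w)\le C(1+|d\imath(w)|)\,d_{TM}(v,w)$; by symmetry of the argument in $v\leftrightarrow w$ one may replace $|d\imath(w)|$ by $\min(|d\imath(v)|,|d\imath(w)|)$, which is \eqref{eq:quasiinvariant_dTM}.

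\textbf{Proof of (iii).} This is an integrated version of (ii). Let $\psi$ be an isometry, $f\in W^{1,p}(M;M)$, $g\in W^{1,\infty}(M;M)$ with $\|d(\imath\circ g)\|_{L^\infty}\le L$. Writing $\Psi=\imath\circ\psi\circ\imath^{-1}$ (extended $C^1$ to a neighborhood of $\imath(M)$ with uniformly bounded $C^1$ norm, as above), we have $\imath\circ\psi\circ f=\Psi\circ(\imath\circ f)$ and likewise for $g$, so pointwise a.e.\ in $q$,
\begin{equation*}
|\Psi(\imath f(q))-\Psi(\imath g(q))|\le\Lip(\Psi)\,|\imath f(q)-\imath g(q)|,
\end{equation*}
and by the chain rule for Sobolev maps composed with $C^1$ functions,
\begin{equation*}
|d(\Psi\circ\imath f)(q)-d(\Psi\circ\imath g)(q)|\le|d\Psi(\imath f(q))|\,|d(\imath f)(q)-d(\imath g)(q)|+|d\Psi(\imath f(q))-d\Psi(\imath g(q))|\,|d(\imath g)(q)|.
\end{equation*}
Bounding $|d\Psi|\le C$, $|d\Psi(\imath f(q))-d\Psi(\imath g(q))|\le\|d^2\Psi\|_{L^\infty}|\imath f(q)-\imath g(q)|$, and $|d(\imath g)(q)|\le L$, then integrating the $p$-th powers over $M$, gives $d_{1,p}(\psi\circ f,\psi\circ g)\le C(1+L)\,d_{1,p}(f,g)$, which is \eqref{eq:dfg_quasiinvariant_isometry}. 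The only mild care needed is the validity of the chain rule $d(\Psi\circ h)=d\Psi(h)\,dh$ a.e.\ for $h=\imath\circ f\in W^{1,p}$ and $\Psi\in C^1$ with bounded derivative, which is standard (e.g.\ via the approximate-differentiability characterization already invoked in Section~\ref{secsobolev}). Throughout, the uniformity of the constants over $\psi\in\Isom_+(M)$—needed only if one wants a single $C$—follows from compactness of the isometry group, but for the statement as given, which fixes one isometry, smoothness of that fixed isometry already suffices.
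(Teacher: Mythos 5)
Your argument is correct and follows the same skeleton as the paper's proof: for (ii) you reduce to one inequality using that $d\phi$ preserves norms and that $\phi^{-1}$ is again an isometry, then split $d\Phi(p)\xi-d\Phi(q)\zeta$ into an ``isometry applied to $\xi-\zeta$'' term plus a ``Lipschitz variation of $d\Phi$ times $|\zeta|$'' term, which is exactly where the asymmetric factor $1+\min(|d\imath(v)|,|d\imath(w)|)$ comes from; and (iii) is the integrated pointwise version, just as in the paper. The implementation differs in three minor but legitimate ways. For the lower bound in (i) you project the Euclidean chord through the tubular-neighborhood nearest-point projection, whereas the paper inverts the local tangent-plane projections $P_{\imath(p)}\circ\imath$ and adds a separate compactness argument for far-apart pairs; both are standard. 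In (ii)--(iii) you work with the single global extension $\imath\circ\phi\circ\imath^{-1}\circ\pi$ and obtain the Lipschitz bound on its differential intrinsically (then convert with (i)), which lets you avoid the paper's case distinction $d(p,q)<\delta$ versus $d(p,q)\ge\delta$; and you get uniformity of the constants over all isometries from compactness of the isometry group and smoothness of its action (Myers--Steenrod, Theorem~\ref{th:isom_lie}), while the paper gets it from the uniform bounds on higher derivatives of isometries in Lemma~\ref{le:isometries} (elliptic estimates for the harmonic map equation). One small correction: your closing remark that ``the statement as given fixes one isometry'' misreads the quantifiers — the constant $C$ is chosen before the isometry, so uniformity over all of $\mathrm{Isom}(M)$ (not just $\mathrm{Isom}_+(M)$, but the full group is also compact) is genuinely needed; fortunately your compactness argument already supplies it, so no actual gap results.
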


\begin{proof}  \ref{le:distance_TM_via_imathp}: Since $\imath$ is an isometric immersion,  the upper bound in \eqref{eq:equivalence_extrinsic_distance} follows by looking at the length of curves.
To show the lower bound, let $P_{\imath(p)} : \R^d \to T_{\imath(p)}\imath (M) \subset \R^d$ denote the orthogonal projection.
Then $P_{\imath(p)} \circ \imath$ is a smooth
map from $M$ to $T_{\imath(p)}\imath (M)$,
and $d(P_{\imath(p)} \circ \imath)(p)$ is an isometric linear map from $T_p M$ to the $n$-dimensional space $P_{\imath(p)}(\R^d)=T_{\imath(p)}\imath (M)$. Hence $P_{\imath(p)} \circ \imath$ is a smooth diffeomorphism  on a ball $B_{\delta(p)}(p)$.
In particular we can choose $\delta(p)$ so small that
 $(P_{\imath(p)} \circ \imath)^{-1} \circ P_{\imath(p)}$ has
Lipschitz constant at most $2$.

Since $M$ is compact there exists a $\delta > 0$ such that $\delta(p) \ge \delta$ for all $p \in M$.
Hence $d(p,q) \le 2 |\imath(p) -\imath(q)|$ if
$d(p,q) \le \delta$. Since $\imath$ is an embedding, compactness of $M$ also implies that there exists a $\delta' >0$ such that $|\imath(p) - \imath(q)|\ge \delta'$ for all $p,q$ with $d(p,q) \ge \delta$. Thus
 \eqref{eq:equivalence_extrinsic_distance} holds with $C := \max(2, \delta/\delta')$.

 \medskip

\ref{le:distance_TM_via_imathder}: We first note that it suffices to prove the upper bound for $d_{TM}(d\phi(v), d\phi(w))$  in   \eqref{eq:quasiinvariant_dTM}. Indeed, since $\imath$ and $\imath \circ \phi$ are isometric immersions we
 have
 $$ |d(\imath \circ \phi)(v) | = |v| = |d\imath(v)|.$$
 Thus  applying the upper bound to $\tilde v$, $\tilde w$, $\tilde \phi$ instead of $v,w,\phi$  with
 $\tilde v  = d\phi(v)$, $\tilde w = d\phi(w)$, and $\tilde \phi = \phi^{-1}$, we get the lower bound.

 To prove the upper bound,  we may assume that $|d\imath(w)|  \le |d\imath(v)|$ since the assertion
 is symmetric in $v$ and $w$.
  Set $p' := \imath(p)$, $p'' := (\imath \circ \phi)(p)$, $v' := d\imath(v) $,
 $v'' := d(\imath \circ \phi)(v)$, and similarly for $q$ and $w$. 
Let $P$ and $\delta > 0$ be as in the proof of assertion \ref{le:distance_TM_via_imathp}.
 Then $\varphi_p := P_{p'} \circ \imath$ is a smooth diffeomorphism from $B_\delta(p)$ to its image
 and $d\varphi_p(p)$ is an isometry from $T_p M$ to $T_{p'}\imath(M)$, viewed as a subspace of $\R^d$.

 Now assume first that $d(p,q) < \delta$.
 Let $\eta:= \imath \circ \phi \circ \varphi_{p}^{-1}$.
 Then $\imath \circ \phi = \eta \circ P_{p'} \circ \imath$.
 Thus $v'' = (d\eta)(P_{p'}(p')) P_{p'} v'$
 and $w'' = (d\eta)(P_{p'}(q')) w'$.
 By Lemma~\ref{le:isometries}  below, isometries are smooth and their second derivatives are uniformly bounded. Hence the second
 derivatives of $\eta$ are uniformly bounded and we get
 \begin{equation}  \label{eq:dTM_bound_double_prime}
 |v'' - w''| \le |(d\eta)(P_{p'}(p')) P_{p'} (v' - w')| + C |p'-q'|  \, |w'| \le |v'-w'| + C |p'-q'| \, |w'|
 \end{equation}
since $d\eta(P_{p'}(p'))$ is an isometry and $P_{p'}$ has Lipschitz constant $1$.
Assertion \ref{le:distance_TM_via_imathp} implies that
$$ |p''-q''| \le d(\phi(p), \phi(q)) = d(p,q) \le C |p'-q'|.$$
Combining this with   \eqref{eq:dTM_bound_double_prime},  we get the upper bound in
\eqref{eq:quasiinvariant_dTM}, provided that $d(p,q) < \delta$.

If $d(p,q) \ge \delta$, we use the  estimate
\begin{align*}
|v''- w''| \le |v''| + |w''| = |v'| + |w'| \le |v'-w'| + 2 |w'| \le  |v'-w'| + \frac{2}{\delta} |w'|   \, d(p,q).
\end{align*}
Together with \eqref{eq:equivalence_extrinsic_distance} we get the desired conclusion.

 \medskip

\ref{le:distance_TM_via_imathw1p}:
  Set
  $$ d_0(f,g)(q) := |(\imath \circ f)(q) - (\imath \circ g)(q)|_{\R^d},
  \quad
  d_1(f,g)(q) := |d(\imath \circ f) -  d(\imath \circ g)|_{\R^d \otimes T^*_q M}.
  $$
  Let $e_1, \ldots, e_n$ be an orthonormal basis of $T_q M$. Then, by the first identity in
  \eqref{eq:MA_scalarproduct} with $B=A$
and    \eqref{eq:notation_scalar_product}
  \begin{equation*}
  d_1^2(f,g)(q) =
    \sum_\alpha |   d\imath( df e_\alpha) -
  d\imath(dg e_\alpha)|_{\R^d}^2 .
  \end{equation*}
  Thus it follows from  \eqref{eq:distance_tangent_bundle}
  that
  \begin{align}
   \label{eq:d_imath_f_underisometry}
  d_1(\psi \circ f, \psi \circ g)(q)
  \le  C(1+L) (  d_1(f,g)(q) + d_0(f,g)(q)).
  \end{align}
  Moreover, we have $d((\psi \circ f)(q), (\psi \circ g)(q)) = d(f(q), g(q))$ and thus
  \eqref{eq:equivalence_extrinsic_distance} implies that  $d_0(\psi \circ f, \psi \circ g)(q) \le
  C d_0(f,g)(q)$.
Together with  \eqref{eq:d_imath_f_underisometry} we get   \eqref{eq:dfg_quasiinvariant_isometry}.
 \end{proof}

\section{Lipschitz approximation}  \label{se:Lip_approximation}

\begin{proposition} \label{pr:lipschitz_approximation} Let $1 < p < \infty$.  There exist constants $\Lambda$, $C > 0$ with the following property.
If $f \in W^{1,p}(M; M)$ then there exists $\tilde f \in W^{1,\infty}(M;M)$ such that
\begin{equation}
|d \tilde f| \le \Lambda \quad  \text{almost everywhere}
\end{equation}
and
\begin{equation}\label{eqlipspprox2}
\int_M |\imath \circ f -\imath\circ\tilde f|^p + |d(\imath\circ f) - d(\imath\circ \tilde f)|^p \, d\vol_M \le C \int_M \dist^p(df, SO(M)) \, d\vol_M.
\end{equation}
In particular,
 \begin{equation}\int_M \dist^p(d\tilde f, SO(M)) \, d\vol_M \le C \int_M \dist^p(df, SO(M)) \, d\vol_M.
 \end{equation}
\end{proposition}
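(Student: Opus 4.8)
The plan is to mimic the Euclidean Lipschitz (Lusin-type) truncation, adapted so that the truncated map still takes values in $M$. First I would work with $F := \imath \circ f \in W^{1,p}(M;\R^d)$ and recall the standard maximal-function truncation: there is a set $E_\lambda \subset M$ with $\vol_M(M \setminus E_\lambda) \le C\lambda^{-p} \int_M (|F| + |dF|)^p$ on which the restriction of $F$ is Lipschitz with constant $\le C\lambda$, and $F$ agrees with a global Lipschitz map $G_\lambda : M \to \R^d$ on $E_\lambda$ with $\|dG_\lambda\|_{L^\infty} \le C\lambda$ and $\|F - G_\lambda\|_{W^{1,p}}^p \le C \int_{M \setminus E_\lambda}(|F|+|dF|)^p$. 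Because $M$ is compact, $|F| = |\imath \circ f|$ is bounded, so the relevant quantity is really the bad set where $|dF|$ is large, and $\int_{M \setminus E_\lambda}|dF|^p$ is small once $\lambda$ is large, but crucially is \emph{controlled by the energy}: on the good set $|dF| = |df| \le C$ is comparable to something of size one plus $\dist(df,SO(M))$, so the set $\{|dF| > \lambda\}$ for $\lambda$ a fixed large multiple of $\sup_M(1 + |A|)$ is contained in $\{\dist(df,SO(M)) > c\lambda\}$ up to the measure-one factor, and hence $\int_{\{|dF|>\lambda\}}|dF|^p \le C \int_M \dist^p(df,SO(M))$ by the pointwise bound $|df| \le C(1 + \dist(df,SO(M)))$ valid since elements of $SO(T_qM,T_{f(q)}M)$ have norm $\sqrt n$. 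This is the mechanism that converts a generic Lusin truncation estimate into one with the energy on the right.

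The second step is the projection back onto $M$. Let $\pi : \mathcal O \to \imath(M)$ be the smooth nearest-point projection from a tubular neighborhood. If $G_\lambda$ happened to map into $\mathcal O$ we would simply set $\tilde f := \imath^{-1} \circ \pi \circ G_\lambda$. In general $G_\lambda$ need not stay in $\mathcal O$, so I would instead interpolate: on the good set $E_\lambda$ we have $G_\lambda = F \in \imath(M)$, so I retain $\tilde f = f$ there, and on the bad set I use the Lipschitz extension only after first projecting. Concretely, following \cite[pp.\ 390--392]{KupfermanMaorShachar2019}, one extends the $\imath(M)$-valued Lipschitz map $f|_{E_\lambda}$ (with Lipschitz constant $\le C\lambda$ with respect to the extrinsic distance, hence $\le C\lambda$ intrinsically by Lemma~\ref{le:distance_TM_via_imath}\ref{le:distance_TM_via_imathp}) to a global $\R^d$-valued Lipschitz map $\hat G$ with the same constant and with range in a $C\lambda\,\vol_M(M\setminus E_\lambda)^{1/n}$-neighborhood of $\imath(E_\lambda) \subset \imath(M)$; choosing $\lambda$ large fixes the constant $\Lambda$, and choosing the bad set small enough (which, by the energy bound above, happens precisely when the energy is below a threshold) guarantees $\hat G$ has range inside $\mathcal O$. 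Then $\tilde f := \imath^{-1} \circ \pi \circ \hat G$ is Lipschitz, maps into $M$, and $d\tilde f = d(\imath^{-1}) \, d\pi(\hat G)\, d\hat G$, so $|d\tilde f| \le \|d\pi\|_{L^\infty(\mathcal O)} \, C\lambda =: \Lambda$.

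The third step is the error estimate \eqref{eqlipspprox2}. Since $\pi$ is the identity on $\imath(M)$ and $d\pi(q)$ is the orthogonal projection onto $T_q\imath(M)$, the map $\imath\circ\tilde f$ and $\imath\circ f = F$ agree on $E_\lambda$ together with their differentials (both differentials equal $dF$ there). On the bad set one estimates $|\imath\circ f - \imath\circ\tilde f| \le |F| + |\imath\circ\tilde f|$, both bounded by $\diam \imath(M)$, and $|d(\imath\circ f) - d(\imath\circ\tilde f)| \le |dF| + \Lambda$; hence the left side of \eqref{eqlipspprox2} is bounded by $C\int_{M\setminus E_\lambda}(1 + |dF|^p) = C\int_{M\setminus E_\lambda}(1 + |df|^p) \le C\vol_M(M\setminus E_\lambda) + C\int_{\{|dF|>\lambda\}}|dF|^p$, and both terms are $\le C\int_M \dist^p(df,SO(M))$ by the first step. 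Finally, for the case that the energy exceeds the threshold, one takes $\tilde f$ to be any fixed Lipschitz map (e.g.\ a constant, or the identity) and notes that then the right side of \eqref{eqlipspprox2} is bounded below by a positive constant while the left side is bounded above by a constant (using compactness of $M$), so the inequality holds trivially after enlarging $C$. The last displayed inequality of the proposition then follows from \eqref{eqlipspprox2} and the triangle inequality $\dist(d\tilde f, SO(M)) \le |d\tilde f - d f| + \dist(df, SO(M))$ pointwise (identifying the differentials via $d\imath$, whose norm and that of its inverse are bounded).

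I expect the main obstacle to be the projection step: one must verify that the Lipschitz extension $\hat G$ of the boundary data $f|_{E_\lambda}$ can be arranged to stay within the fixed tubular neighborhood $\mathcal O$, which forces $\lambda$ and the size of the bad set to be coupled correctly, and it is here that restricting to small energy (and disposing of the large-energy case by a trivial bound) is essential. The remaining ingredients — the maximal-function truncation and the pointwise comparison $|df| \le C(1 + \dist(df,SO(M)))$ — are routine.
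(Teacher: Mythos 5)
Your proposal is correct and follows essentially the same route as the paper's (sketched) proof: reduce to small energy with a trivial bound otherwise, perform a Euclidean-style Lipschitz/Lusin truncation whose bad set is controlled by the energy via the pointwise bound $|df|\le \sqrt n+\dist(df,SO(M))$, and project the resulting Lipschitz map back onto $\imath(M)$ with the nearest-point projection after checking it stays in the tubular neighbourhood. The only differences are cosmetic (you run the maximal-function truncation and Lipschitz extension directly on $M$ instead of covering by charts and quoting the Euclidean result, and the "large-energy" and final "in particular" steps should be phrased using that same pointwise bound and the closeness of the target tangent planes, exactly as in the cited references).
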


\begin{proof} This result appears as Step IV in the proof of Theorem 3  in \cite[pp. 390--392]{KupfermanMaorShachar2019}. For the convenience of the reader, we sketch the short argument.

 It suffices to show
 that there is $\eps_0>0$ such that
  the result holds when $\| \dist(df, SO(M))  \|_{L^p} < \eps_0$. Indeed, if
$\| \dist(df, SO(M))  \|_{L^p} \ge \eps_0$ we can take $\tilde f$ to be a constant map.
For  $\| \dist(df, SO(M))  \|_{L^p} < \eps_0$  the assertion follows from the corresponding result in Euclidean space (for the latter see, for example, \cite[Proposition A.1]{FrieseckeJamesMueller2002-CPAM})
by covering $M$ with finitely many charts to obtain a Lipschitz map  $\hat f$ from $M$ to $\R^d$.
Then one can use the fact that
the set where
$\imath\circ f$ and $\hat  f$ (or $d(\imath\circ f)$ and $d\hat f$) disagree is a small set when $\| \dist(df; SO(M))  \|_{L^p}$  is small.
Using that $|d\hat f| \le \Lambda$ one
concludes that $\hat f$ takes values in a small neighborhood of $\imath(M) \subset \R^d$. Thus one can define $\tilde f := \imath^{-1}\circ \pi \circ f$
where $\pi$ is the nearest point projection from a tubular neighbourhood of $\imath(M)$ to $\imath(M)$.
For the details, see, for example, Step IV in the proof of Theorem 3  in \cite[pp. 390--392]{KupfermanMaorShachar2019} or \cite [Step 1 in the proof of Theorem~4.1(i)]{KroemerMueller2021}.
\end{proof}

\section{The Piola identity and almost harmonicity}   \label{eq:almost_harmonicity}

A crucial ingredient in the proof of the Euclidean rigidity estimate is the Piola identity
$$ \Div \cof df = 0$$
which holds in the sense of distributions for maps $f: \Omega \subset \R^n \to \R^n$
which belong to the Sobolev space $W^{1,{n-1}}(\Omega; \R^n)$.
Together with the matrix estimate
$$ |F - \cof F| \le C_L  \dist(F, SO(n)) \quad \text{if $F \in \R^{n \times n}$ and $|F| \le L$}
$$
the Piola identity shows that any map with $|df| \le L$ can be written as
$u + w$  where $u$ is harmonic and $w$ is controlled by the energy in the optimal way, i.e.,
 $\|dw\|_{L^p} \le C \|\dist(df, SO(n))\|_{L^p}$.
 Being harmonic, $u$ enjoys additional regularity and compactness properties
 which allow one to reduce the rigidity estimate to an estimate for maps that are $C^1$ close to the identity.

 \medskip

 We will use a similar reasoning in the Riemannian case.
 The crucial Piola identity in this setting was shown by Kupfermann-Maor-Schachar
 \cite{KupfermanMaorShachar2019}.
 We recall the definition of the second fundamental from $\mathbb A$ in
\eqref{eq:definition_second_fundamental}  and \eqref{eq:definition_second_endos}, and that for a linear map
$F$ the cofactor map
 $\Cof F$ is defined by  \eqref{eq:define_Cof}.

 \begin{theorem}[Weak extrinsic Piola identity, \cite{KupfermanMaorShachar2019}, Theorem 2]
 \label{th:extrinsic_piola}
Assume that  $p \ge 2(n-1)$ for $n \ge 3$ and $p >2$ for $n=2$.
Let $f \in W^{1,p}(M;M)$ and set $F := \imath \circ f$. Then, for every $\xi \in W^{1,2}(M; \R^d) \cap L^\infty(M; \R^d)$,
\begin{equation}  \label{eq:extrinsic_piola}
 \int_M \langle \Cof dF, d \xi\rangle_{g, \mathfrak e} -
 \langle (\mathbb A \circ F) (\Cof dF, dF), \xi \rangle_{ \mathfrak e}
 \,  \, d\vol_M = 0.
\end{equation}
\end{theorem}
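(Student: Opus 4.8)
The plan is to prove the identity first for smooth maps, by combining the Gauss--Weingarten relations with the \emph{intrinsic} Riemannian Piola identity, and then to pass to general $f$ by approximation, the hypotheses on $p$ being used exactly to make the limiting step well behaved.

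\emph{Step 1 (smooth case).} Let $f:M\to M$ be smooth, $F:=\imath\circ f$, and $\xi\in C^\infty(M;\R^d)$. At each $q\in M$ I would split $\xi=\xi^\top+\xi^\perp$ into its parts tangent and normal to $\imath(M)$ at $F(q)$ (both smooth, e.g.\ $\xi^\top=d\pi(F)\xi$ with $\pi$ the nearest-point projection), and write $\xi^\top=d\imath(Y)$ with $Y$ a smooth section of $f^*TM$. Fix a $g$-orthonormal frame $e_\alpha$ of $T_qM$. Since $\Cof dF(q)$ takes values in $T_{F(q)}\imath(M)$, in $\langle\Cof dF,d\xi\rangle_{g,\mathfrak e}=\sum_\alpha\langle\Cof dF(e_\alpha),d_{e_\alpha}\xi\rangle_{\mathfrak e}$ only the tangential parts of the ambient derivatives $d_{e_\alpha}\xi$ matter. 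The Gauss formula $d_v\xi^\top=d\imath(\nabla_v Y)+(\text{normal})$, together with the fact that $d\imath$ is an orientation-preserving linear isometry (so $\Cof dF=d\imath\circ\Cof df$), handles the tangential part; and formula \eqref{eq:2nd_form_normal}, applied with $v=dF(e_\alpha)$ and $w=\Cof dF(e_\alpha)$ and contracted against $\xi^\perp$, handles the normal field $\xi^\perp$. Summing over $\alpha$, using the definition \eqref{eq:definition_second_endos} of $\mathbb A$, the symmetry of $A$, and that $\mathbb A$ is normal-valued (so $\xi^\perp$ may be replaced by $\xi$), I expect to arrive at the pointwise identity
\begin{equation*}
\langle\Cof dF,d\xi\rangle_{g,\mathfrak e}-\langle(\mathbb A\circ F)(\Cof dF,dF),\xi\rangle_{\mathfrak e}=\langle\Cof df,\nabla Y\rangle_{g}.
\end{equation*}
Integrating over $M$, \eqref{eq:extrinsic_piola} is then equivalent to the intrinsic Riemannian Piola identity $\int_M\langle\Cof df,\nabla Y\rangle_g\,d\vol_M=0$ for all smooth sections $Y$ of $f^*TM$, i.e.\ $\operatorname{div}_M\Cof df=0$ weakly. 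I would take this from \cite{KupfermanMaorShachar2019}, or prove it by applying $d\circ f^*=f^*\circ d$ to $\iota_W\vol_M$ (for a vector field $W$ on the target), using $f^*(\iota_W\vol_M)=\iota_{(\Cof df)^T(W\circ f)}\vol_M$, $d(\iota_W\vol_M)=(\operatorname{div}W)\vol_M$, and the cofactor algebra $df\,(\Cof df)^T=(\Det df)\,\Id$; expanding the resulting pointwise divergence identity by the Leibniz rule collapses it to $\langle W\circ f,\operatorname{div}_M\Cof df\rangle_g=0$ for every $W$, whence the claim.

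\emph{Step 2 (approximation).} For general $f\in W^{1,p}(M;M)$ I note that $p\ge2(n-1)\ge n$ for $n\ge2$, so $C^\infty(M;M)$ is dense in $W^{1,p}(M;M)$ by the Schoen--Uhlenbeck theorem; choose smooth $f_k$ with $F_k:=\imath\circ f_k\to F$ in $W^{1,p}(M;\R^d)$. As $\Cof(\cdot)$ is a homogeneous polynomial of degree $n-1$, one gets $\Cof dF_k\to\Cof dF$ in $L^{p/(n-1)}(M)\hookrightarrow L^2(M)$ (since $p/(n-1)\ge2$) and $\Cof dF_k\otimes dF_k\to\Cof dF\otimes dF$ in $L^{p/n}(M)\hookrightarrow L^1(M)$ (since $p/n\ge1$), while $\mathbb A\circ F_k\to\mathbb A\circ F$ boundedly and a.e.\ ($\mathbb A$ smooth, $F_k$ valued in the compact set $\imath(M)$). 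Hence, for fixed $\xi\in C^\infty(M;\R^d)$, both terms of \eqref{eq:extrinsic_piola} pass to the limit. Finally, given $\xi\in W^{1,2}(M;\R^d)\cap L^\infty(M;\R^d)$, I would mollify to obtain smooth $\xi_j\to\xi$ in $W^{1,2}$ with $\sup_j\|\xi_j\|_{L^\infty}<\infty$ and $\xi_j\to\xi$ a.e.; since $\Cof dF\in L^2$ and $(\mathbb A\circ F)(\Cof dF,dF)\in L^1$, both terms are continuous along this convergence, which finishes the proof.

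\emph{Expected main obstacle.} The differential geometry of Step 1 is routine once one carefully tracks signs and transposes and the sign convention for $A$ fixed in \eqref{eq:definition_second_fundamental}. The genuinely delicate point is the limiting argument of Step 2: it is essentially sharp, and the thresholds $p\ge2(n-1)$ (respectively $p>2$ for $n=2$) are precisely what guarantees $\Cof dF\in L^2$ and $|dF|^{\,n}\in L^1$, i.e.\ that both integrals in \eqref{eq:extrinsic_piola} are well defined and stable under $W^{1,p}$-approximation of $f$; making that stability rigorous, rather than the pointwise computation, is where the real work lies.
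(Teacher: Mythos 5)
Your argument is correct, but it takes a genuinely different route from the paper. The paper never decomposes $\xi$ or invokes the intrinsic Piola identity: it perturbs ambiently, setting $F_t:=\pi(F+t\xi)$, and computes $\frac{d}{dt}\big|_{t=0}\int_M \Det dF_t\,d\vol_M$ in two ways --- the integral is the degree of $F_t$, hence constant in $t$, while differentiating under the integral and using the cofactor algebra together with \eqref{eq:second_derivative_pi} produces exactly the integrand of \eqref{eq:extrinsic_piola}; the passage to the stated regularity is then done as in \cite{KupfermanMaorShachar2019} (approximating first $F$, then $\xi$) or by a coordinate computation with dominated convergence. Your route instead proves the intrinsic identity $\operatorname{div}^{\nabla}\Cof df=0$ via $d\circ f^*=f^*\circ d$ applied to $\iota_W\vol_M$, and converts it into the extrinsic weak form by the Gauss--Weingarten split of $\xi$, which is a clean way of exhibiting the extrinsic statement as ``intrinsic Piola plus second-fundamental-form correction''; your pointwise identity and the signs are consistent with the paper's convention \eqref{eq:definition_second_fundamental}, \eqref{eq:2nd_form_normal}. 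What the paper's variational/degree argument buys is that it treats arbitrary ambient variations at once and is the form of the computation reused later for the harmonic-map equation \eqref{eq:extrinsic_harmonic}; what your argument buys is a simpler limiting step, since in every admissible case $p>n$ ($2(n-1)>n$ for $n\ge3$, and $p>2$ for $n=2$), so smooth maps are dense by mollification plus nearest-point projection --- Schoen--Uhlenbeck is overkill here. Two small glosses worth tightening: $\Cof dF$ depends not only polynomially on the matrix $dF$ but also on the tangent plane $T_{F(q)}\imath(M)$, so the continuity $\Cof dF_k\to\Cof dF$ in $L^{p/(n-1)}$ uses the uniform convergence $F_k\to F$ (available since $p>n$) in addition to $dF_k\to dF$ in $L^p$; and the weak intrinsic identity $\int_M\langle\Cof df,\nabla Y\rangle_g\,d\vol_M=0$ should be stated as the integrated form of the pointwise divergence identity for smooth $f$ on the closed manifold $M$, which is how you use it.
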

In~\eqref{eq:extrinsic_piola} $\mathfrak e$ denotes the Euclidean metric on $\R^d$ and we identify
$\Cof dF$, which is a map from $T_q M$ to $T_{F(q)} \imath(M)$,  with a map from $T_q M$ to $\R^d$.

To prove Theorem~\ref{th:extrinsic_piola} one considers the family of maps $F_t(q) := \pi (F(q) + t \xi(q))$
for $t \in (-\delta, \delta)$, where $\pi$ is the closest point projection to $\imath(M)$, which is defined
and smooth in a neighbourhood $\mathcal O$  of $\imath(M)$. To get
 \eqref{eq:extrinsic_piola}, one computes the expression
 $$  \frac{d}{dt} \int_M \Det dF_t \, d\vol_M$$
 in two different ways.

 On the one hand, the integrand is the pull-back of the volume form on $\imath(M)$. Thus the integral
 is the degree of the map $F_t$ and hence constant in $t$. On the other hand, one can commute
 differentiation and integration and use the
 identity
 \begin{align}
L(v) :=  & \,  \frac{d}{dt}|_{t=0} dF_t (v) = d \left(\frac{d}{dt}|_{t=0} F_t\right)(v) = d [(d\pi \circ F)(\xi)](v)  \nonumber \\
  = & \, (d\pi \circ F)(d\xi(v)) +  (d^2\pi \circ F)(dF(v), \xi).  \label{eq:ddt_dF_t}
 \end{align}
 With the definition of $\Cof$,  the identities $\Cof(AB) = \Cof A \, \Cof B$  as well as
 $A^T \Cof(A) = \Det A  \,  \Id$, the relation \eqref{eq:second_derivative_pi} between
 $d^2\pi$ and the second fundamental form $A$
 and the definition \eqref{eq:definition_second_endos} of $\mathbb A$ we get
 \begin{align}
 & \, \frac{d}{dt}|_{t=0} (\Det d F_t)^2 =   \frac{d}{dt}|_{t=0} \left( \Det  dF_t^T  dF_t\right)
 \nonumber  \\
 =  & \,   \langle  \Cof \left((dF)^T dF \right) ,   L^T dF + (dF)^T  L  \rangle
 \nonumber  \\
 = & \, 2 \Det dF  \langle \Cof dF, L \rangle  \nonumber \\
 = & \,  2 \Det dF  \langle \Cof dF, d\xi\rangle -2\Det dF  \langle (\mathbb A\circ F)(\Cof dF, dF),   \xi  \rangle .
 \label{eqdetdf2cof}
 \end{align}
 Thus
 $$ \frac{d}{dt}|_{t=0} \Det d F_t =  \langle \Cof dF, d\xi\rangle -  \langle (\mathbb A\circ F)(\Cof dF, dF),   \xi  \rangle.$$

This argument shows the result for smooth $f$ and $\xi$. For $f$ and $\xi$ with the   regularity stated
one can argue by approximating first $F$ and then $\xi$, see the end of Section 2.4 in
\cite{KupfermanMaorShachar2019}.
Alternatively one can  compute $\frac{d}{dt} \Det dF_t$
in local coordinates and use  dominated convergence to justify the interchange of differentiation and integration.

\medskip

In analogy with the Euclidean case, the Piola identity implies that  maps $f \in W^{1,\infty}(M;M)$
for which $\dist(df, SO(M))$ is small are almost harmonic maps.
To make this precise, we recall that a map $F: M \mapsto \imath(M)$ is a harmonic map
if it is a stationary point of $\int_M \langle dF, dF \rangle d\vol_M$ among maps with values in $\imath(M)$.
Considering variations $F_t := \pi \circ (F + t \xi)$ as above and using  \eqref{eq:ddt_dF_t},
the formula $\frac{d}{dt}|_{t=0}  \langle dF_t, dF_t\rangle = 2 \langle F, L\rangle$,  and the
relation between $d^2\pi$ and the second fundamental form,  we see that harmonic maps
satisfy
\begin{equation}  \label{eq:extrinsic_harmonic}
\int_M \langle dF, d \xi\rangle_{g, \mathfrak e} -
 \langle (\mathbb A \circ F) (dF, dF), \xi \rangle_{ \mathfrak e}
 \,  \, d\vol_M = 0
\end{equation}
for all $\xi \in (W^{1,2} \cap L^\infty)(M; \R^d)$.

\medskip

Let
\begin{eqnarray}
h' &:=& - (\mathbb A\circ F) (dF- \Cof dF, dF), \\
h &:=& dF - \Cof dF.
\end{eqnarray}
Then the Piola identity implies that  if $f \in W^{1,\infty}(M;M)$ and $F = \imath \circ f$, then
\begin{equation}  \label{eq:almost_harmonic}
\int_M \langle dF, d \xi\rangle_{g, \mathfrak e} -
 \langle (\mathbb A \circ F) (dF, dF), \xi \rangle_{g, \mathfrak e}
 \,  \, d\vol_M =  \int_M    \langle h, d\xi \rangle_{g, \mathfrak e} +  \langle h', \xi \rangle_{\mathfrak e}   \,  \, d\vol_M
\end{equation}
for all $\xi \in (W^{1,1} \cap L^\infty)(M; \R^d)$.
Moreover, it follows from \eqref{eq:CofF_equals_F} and   \eqref{eq:dist_SO} that
\begin{equation}  \label{eq:bound_error_almost_harmonic}
|h| + |h'|  \le C(\Lambda)  \,  \dist(df, SO(M)),  \quad \text{if $|df| \le \Lambda$}.
\end{equation}

To write the equation for $F$ in strong form, recall that
$\delta$   is the dual operator of the exterior differential $d: W^{1,2}(M; \R^d) \to L^2(M; \R^d \otimes T^*M)$.
In  local coordinates, $\delta$ can be expressed as follows. If  $\omega^l = \omega^l_\beta dx^\beta$ then
$$ (\delta  \omega)^l  =  - \frac{1}{\sqrt{\det g}} \partial_\alpha (  \sqrt{\det g} \, g^{\alpha \beta} \omega^l_\beta),$$ for $l=1, \ldots, d$ and $\alpha, \beta =1, \ldots, n$, with summation over repeated indices.
Set $\Delta_g := - \delta d$. Then
\begin{equation}
(\Delta_g F)^l =  \frac{1}{\sqrt{\det g}} \partial_\alpha (  \sqrt{\det g} \, g^{\alpha \beta} \partial_\beta F^l),
\end{equation}
i.e., $\Delta_g$ is the Laplace-Beltrami operator, acting componentwise.
The equation~\eqref{eq:almost_harmonic} for $F$ becomes
\begin{equation}
- \Delta_g F = \delta dF =  (\mathbb A \circ F)(dF, dF) + \delta h + h'.
\end{equation}
For future reference we recall that 
\begin{equation}   \label{eq:tension_field_tangential} 
 [-\Delta_g F - ({ \mathbb A} \circ F)(dF, dF)](p) \in T_{F(p)}\imath(M).
\end{equation}
Indeed, let $\nu$ be normal field on $\imath(M)$ and let $\eta \in C^\infty(M)$.
Let  $\xi = \eta  \, \,  (\nu \circ F)$ and let $e_1, \ldots, e_n$ be an orthonormal basis of $T_p M$. Using that $dF e_\beta \in  T_{F(p)}\imath(M)$
as well as  the first identity in \eqref{eq:MA_scalarproduct},  \eqref{eq:2nd_form_normal},  and
 \eqref{eq:definition_second_endos},
we get
\begin{align*}
& \, \langle dF, d\xi \rangle =   \langle dF,  d(\nu \circ F) \rangle \eta =
 \langle dF e_\beta, d\nu (dF e_\beta) \rangle \eta\\
= & \, 
   \langle (A\circ F)(dF e_\beta, dF e_\beta), \nu\circ F \rangle \eta
=   \langle  \mathbb A(dF, dF), \nu \circ F \rangle \eta.
\end{align*}
Since $\int_M  \langle dF, d\xi \rangle \, d\vol_M  = \int_M \langle -\Delta_g F, \nu \circ F\rangle \,  \eta \, d\vol_M$ and since
$\eta\in C^\infty(M)$ was arbitrary, the assertion follows.

\medskip

As pointed out in \cite{KupfermanMaorShachar2019},  the Piola identity implies that
 Sobolev maps with $df \in SO(M)$  are in $W^{1,\infty}$ and   harmonic maps and thus smooth.
The following lemma shows that they are actually smooth isometries, see
\cite[Theorem 1]{KupfermanMaorShachar2019}.

\begin{lemma} \label{le:isometries} Suppose that $f \in W^{1,1}(M;M)$ and $df \in SO(M)$ almost everywhere.
Then $f$ has a representative which is a smooth diffeomorphism that preserves the inner distance in $M$
and the orientation.
Moreover, the higher (covariant) derivatives of $f$ are uniformly bounded.

Conversely,  every map $\phi: M \to M$ which preserves the inner distance is smooth and $df \in SO(M)$ everywhere or $df \in O(M) \setminus SO(M)$ everywhere. Moreover, the higher (covariant) derivatives of $f$ are uniformly bounded.
\end{lemma}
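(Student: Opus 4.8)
\textbf{Proof proposal for Lemma~\ref{le:isometries}.}

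The plan is to run the by-now-standard bootstrap for harmonic maps, then identify the limiting object as a distance-preserving map and invoke rigidity. First I would argue that $f\in W^{1,\infty}$: since $df\in SO(M)$ a.e.\ we have $|df|=\sqrt n$ a.e., so $F:=\imath\circ f$ has bounded differential, and in particular $F\in W^{1,p}$ for every finite $p$; thus the hypotheses of Theorem~\ref{th:extrinsic_piola} are met and $\Cof dF=dF$ a.e.\ by \eqref{eq:CofF_equals_F}. Hence the error terms $h,h'$ vanish and \eqref{eq:almost_harmonic} reduces to \eqref{eq:extrinsic_harmonic}: $F$ is a weak harmonic map from $M$ into $\imath(M)$ which is \emph{bounded}. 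Since the target $\imath(M)$ is a smooth compact submanifold of $\R^d$, the interior $\eps$-regularity theory for harmonic maps (Schoen--Uhlenbeck, or here more simply the fact that we already have an $L^\infty$ solution, so the small-energy condition is automatic on small enough balls once we observe the energy is absolutely continuous) gives that $F$ is smooth, and elliptic bootstrapping on $-\Delta_g F=(\mathbb A\circ F)(dF,dF)$ yields $C^\infty$ regularity with all covariant derivatives bounded by compactness of $M$. This is where I would be most careful about exactly which regularity statement to cite: on a compact manifold there is no boundary, but one must still make the small-energy hypothesis precise; the cleanest route is to note that a bounded weakly harmonic map with $|dF|\in L^\infty$ already satisfies the hypotheses of the standard interior regularity theorem on every small geodesic ball.

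Next, having a smooth $f$ with $df(q)\in SO(T_qM,T_{f(q)}M)$ \emph{everywhere} (by continuity, upgrading the a.e.\ statement), I would show $f$ is a local isometry in the metric sense: $df(q)^Tdf(q)=\Id_{T_qM}$ says precisely $f^*g=g$, so $f$ is a local Riemannian isometry and in particular a local diffeomorphism that preserves lengths of curves, hence preserves the inner distance locally. Since $M$ is compact, $f$ is a covering map (a local isometry from a complete manifold is a covering map, by the standard path-lifting/completeness argument), and since $M$ is connected it is a finite-sheeted covering of itself; moreover $\Det df=1$ everywhere gives orientation preservation. To conclude $f$ is actually injective — a genuine diffeomorphism — one can use that a surjective local isometry of a compact manifold onto itself has degree equal to its number of sheets, but degree equals $1$ because $\Det df\equiv 1$ and $\int_M\Det dF\,d\vol_M=\vol(M)$; hence the covering is single-sheeted and $f$ is a diffeomorphism. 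The uniform bounds on higher covariant derivatives are then automatic from smoothness and compactness.

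For the converse, suppose $\phi:M\to M$ preserves the inner distance. By the Myers--Steenrod theorem a distance-preserving bijection of a Riemannian manifold is a smooth Riemannian isometry; I would cite this rather than reprove it, though one can sketch the argument (distance-preserving maps send geodesics to geodesics at unit speed, so they are determined in normal coordinates by a linear map on one tangent space, which forces smoothness via the exponential map). Once $\phi$ is a smooth isometry, $d\phi(q)$ is a linear isometry $T_qM\to T_{\phi(q)}M$ for every $q$, so $d\phi(q)^Td\phi(q)=\Id$ and $\Det d\phi(q)=\pm1$; by continuity and connectedness of $M$ the sign is constant, giving $d\phi\in SO(M)$ everywhere or $d\phi\in O(M)\setminus SO(M)$ everywhere. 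The uniform bounds on higher derivatives follow as before from smoothness and compactness. I expect the main obstacle to be stating cleanly the passage from ``bounded weakly harmonic'' to ``smooth'' — i.e.\ pinning down precisely which interior regularity theorem applies and verifying its hypotheses in the compact setting — whereas the covering-map and degree arguments, and the citation to Myers--Steenrod, are routine.
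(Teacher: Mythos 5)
Your argument is correct and follows essentially the same route as the paper: Piola identity plus $\Cof dF=dF$ to see that $F=\imath\circ f$ is a bounded weakly harmonic map, elliptic regularity to get smoothness, everywhere-$SO$ plus a degree/volume count to get a diffeomorphism, and Myers--Steenrod for the converse. Three small remarks on where you and the paper diverge. First, the appeal to Schoen--Uhlenbeck $\eps$-regularity is unnecessary (as you half-suspect): since $|dF|=\sqrt n$ a.e., the right-hand side $(\mathbb A\circ F)(dF,dF)$ of $-\Delta_g F=(\mathbb A\circ F)(dF,dF)$ is in $L^\infty$, so the equation is subcritical and the paper simply runs the standard $L^p$ Calder\'on--Zygmund bootstrap; no harmonic-map small-energy theorem is invoked. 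Second, for the converse the paper extracts only $C^1$ regularity from Myers--Steenrod and then, in the orientation-reversing case, observes that $\Cof dF=-dF$, so $F$ is again a harmonic map and smoothness (with bounds) follows from the same elliptic argument; your direct citation of the smooth form of Myers--Steenrod is legitimate but bypasses this trick. Third, a caveat on the final claim: ``smoothness and compactness'' only bounds the covariant derivatives of each \emph{individual} map, whereas the paper's proof (and its later use, e.g.\ in Lemma~\ref{le:distance_TM_via_imath}) establishes bounds that are uniform over the whole class of isometries, obtained from the interior elliptic estimates together with the universal bound $|df|=\sqrt n$; if you want that uniformity from your argument you should either quote the elliptic estimates quantitatively or use compactness of $\Isom(M)$. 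Your covering-map route to bijectivity is fine and essentially equivalent to the paper's open-closed argument combined with the area-formula degree computation.
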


\begin{proof} Let $F := \imath \circ f$. Since $df \in SO(M)$ a.e., $|df|$ is bounded and hence $F \in W^{1,\infty}(M; \R^d)$. It follows from the Piola identity and the fact that $\Cof Df = Df$ almost everywhere,  that
$F$ is a harmonic map. In particular $F$ is a weak solution of the equation
$ \Delta_g F = \mathcal (\mathbb A \circ F)(dF, dF)$.
The right hand side is in $L^\infty$. By standard elliptic estimates and induction we get $F \in W^{k,p}(M;\R^d)$
for all $k \in \N$ and $p \in (1,\infty)$. Hence $F \in C^\infty$, and therefore $f \in C^\infty$.
Since $M$ is compact, standard interior elliptic estimates
and the bound $|df|=\sqrt n$
also give uniform bounds for all derivatives, over the class of all isometries.

\medskip

Next, we show that $f: M \to M$ is bijection.  Since $f \in C^1(M;M)$ and $df \in SO(M)$ almost everywhere,
we get $df \in SO(M)$
everywhere. By the inverse function theorem $f(M)$ is open. Since $M$ is compact, the range $f(M)$ is also
compact, hence closed. Thus $f(M) = M$, since $M$ is connected.

To see that $f$ is injective, note first that $\Det df \equiv 1$ and
hence the degree agrees with the number of preimages.
Thus the area formula  gives $\vol(M) = \deg(f)  \vol (f(M)) = \deg(f)  \vol(M)$.
Hence $\deg(f) =1$
and therefore $f^{-1}(p)$ is a singleton for every $p \in M$.

\medskip

Now assume that $f$ preserves the inner distance. By the Myers-Steenrod theorem, see
\cite[Theorem 2]{Myers-Steenrod39}
$f$ is $C^1$ and  hence $df(p)$ is an isometry for every $p$.
Since $f$ is $C^1$,  we have either $df \in SO(M)$ everywhere or $df \in O(M)\setminus SO(M)$ everywhere.
In the first case, we are done. In the second case, we have $\Cof dF = - dF$ and we conclude
again that $F$ is a harmonic map and thus $F$ and $f$ are smooth with uniform bounds.
\end{proof}

\section{Harmonic map heat flow and uniform $C^{1,\alpha}$ approximation}
\label{secheatflow}

We now improve the Lipschitz approximation in Section~\ref{se:Lip_approximation} to an approximation
with uniform $C^{1,\alpha}$ bounds. This will yield compactness results in $C^1$ which in turn will allow
us to reduce the problem to bounds for the linearization of the metric deficit equation $f^*g - g = h$,
see Section~\ref{se:linearization_metric_deficit} and Section~\ref{se:proof_main} below.

To obtain approximations with  uniform $C^{1,\alpha}$ bounds, we use the extrinsic harmonic map heat flow.
We first show that for initial data with uniform Lipschitz bounds this flow exists for a fixed
time, depending only on the Lipschitz constant  of the initial datum, and satisfies uniform
$C^{1,\alpha}$ bounds for times bounded away from zero, see
  Proposition~\ref{pr:short_time_extrinsic_heat_flow}.
 If, in addition, the initial datum is an almost harmonic map in the sense of
  Section~\ref{eq:almost_harmonicity}, then
 we show that the heat flow stays $W^{1,r}$ close to the initial datum,
 see  Theorem~\ref{th:heat_flow_close_to_Phi}.

\subsection{Local existence and regularity for $W^{1,\infty}$ initial data}

\begin{proposition}  \label{pr:short_time_extrinsic_heat_flow}
Let $\Lambda > 0$, $\oldp \in (2n, \infty)$ and  $\alpha \in (0, 1- \frac{2n}\oldp)$.
 Then there exist $T_0 \in (0,1]$ and $C>0$
 such that for every
 $\Phi \in W^{1,\infty}(M;M)$ with $|d\Phi| \le \Lambda$ almost everywhere
the equation for the  extrinsic harmonic map heat flow
\begin{equation} \label{eq:extrinsic_heat_flow}
 \partial_t U^l - \Delta_g U^l  = ({\mathbb A}^l \circ U)(dU, dU) \quad \text{for $1 \le l \le d$}
\end{equation}
has a mild solution in $C^0([0, T_0]; W^{1,\oldp}(M; \R^d))$  with
$U(x,t) \in \imath(M)$ for all
$(x,t) \in M \times [0,T_0]$ and,
letting $\hat\Phi:=\imath\circ\Phi$,
\begin{eqnarray} \label{eq:extrinsic_heat_initial}
U(\cdot, 0) &=& \hat\Phi, \\
\label{eq:W1p_bound_extrinsic_heat}
\| U(\cdot, t)\|_{W^{1,\oldp}(M; \R^d)} &\le  &  C\quad \text{for all $t \in [0, T_0]$},\\
 \label{eq:C1_tau_bound}
\| U(\cdot, t)\|_{C^{1,\alpha}(M; \R^d)} &\le& C' \quad \text{for all $t \in [T_1, T_0]$}
\end{eqnarray}
where $C'$ may depend, in addition,  on $\alpha$ and $T_1 \in(0,T_0]$.
Moreover,  $U$ is a classical solution of the extrinsic heat flow for $t > 0$.
\end{proposition}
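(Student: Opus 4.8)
The plan is to construct $U$ by the semigroup (mild solution) method, using crucially that $\oldp>2n$. I would first fix a smooth extension of $\mathbb A$ to a tubular neighbourhood $\mathcal O$ of $\imath(M)$ in $\R^d$ (for instance via the nearest point projection $\pi$ and \eqref{eq:second_derivative_pi}), so that the nonlinearity $N(y,\xi):=\mathbb A(y)(\xi,\xi)$ is smooth for $(y,\xi)\in\mathcal O\times(\R^d\otimes T_x^*M)$, is quadratic in $\xi$, and obeys the pointwise bounds $|N(y,\xi)|\le C|\xi|^2$ and $|N(y_1,\xi_1)-N(y_2,\xi_2)|\le C(|\xi_1|+|\xi_2|)|\xi_1-\xi_2|+C(|\xi_1|^2+|\xi_2|^2)|y_1-y_2|$ on $\bar{\mathcal O}$. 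Writing $\hat\Phi=\imath\circ\Phi$, so that $\|\hat\Phi\|_{W^{1,\oldp}}\le C_M(1+\Lambda)$ because $M$ is compact, a mild solution of \eqref{eq:extrinsic_heat_flow}--\eqref{eq:extrinsic_heat_initial} is a fixed point of the Duhamel map $\Gamma U(t):=e^{t\Delta_g}\hat\Phi+\int_0^te^{(t-s)\Delta_g}N(U(s),dU(s))\,ds$, with $e^{t\Delta_g}$ the heat semigroup of $M$ acting componentwise.

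I would then run a contraction argument for $\Gamma$ on $B:=\{U\in C^0([0,T_0];W^{1,\oldp}(M;\R^d)):\sup_{[0,T_0]}\|U(t)-e^{t\Delta_g}\hat\Phi\|_{W^{1,\oldp}}\le\delta_0\}$, using the smoothing estimates $\|e^{\tau\Delta_g}v\|_{W^{1,\oldp}}\le C\tau^{-\beta}\|v\|_{L^{\oldp/2}}$ with $\beta:=\tfrac12+\tfrac n{2\oldp}$ and $\|e^{\tau\Delta_g}v\|_{W^{1,\oldp}}\le C\|v\|_{W^{1,\oldp}}$, both for $\tau\in(0,1]$ on the compact manifold $M$. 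Since $\oldp>2n>n$ we have $\beta<1$, so by the bounds on $N$ together with Hölder's and Sobolev's inequalities, $\Gamma$ maps $B$ into itself and is a contraction once $\delta_0=\delta_0(M)$ and then $T_0=T_0(\Lambda,M,n,\oldp)\in(0,1]$ are chosen small. This produces a mild solution $U\in C^0([0,T_0];W^{1,\oldp})$ with $U(\cdot,0)=\hat\Phi$, and $\|U(t)\|_{W^{1,\oldp}}\le\|e^{t\Delta_g}\hat\Phi\|_{W^{1,\oldp}}+\delta_0$ gives \eqref{eq:W1p_bound_extrinsic_heat}.

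Next I would show that $U$ takes values in $\imath(M)$. Shrinking $T_0$ if necessary, the bound $\|U(t)-\hat\Phi\|_{C^0}\le C\delta_0+C(1+\Lambda)\sqrt t$ keeps the image of $U$ inside $\mathcal O$, so $\psi(x,t):=\dist^2(U(x,t),\imath(M))$ is well defined, non-negative, and vanishes at $t=0$. Differentiating, and using that $\tfrac12\nabla\dist^2(\cdot,\imath(M))=\id-\pi$ and that $N$ is precisely the second fundamental form term — the structure which produces the cancellation turning the a priori $\psi^{1/2}$-order terms into $\psi$-order terms — one obtains a differential inequality $(\partial_t-\Delta_g)\psi\le C(1+|dU|^2)\psi$; this step is where the form of the equation genuinely enters and runs as in \cite{Struwe_variational_methods2,Lin-Wang08}. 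Since $\oldp>2n$ gives $|dU|^2\in L^\infty([0,T_0];L^{\oldp/2}(M))$ with $\oldp/2>n$, the parabolic maximum principle forces $\psi\equiv0$, so $U(\cdot,t)\in\imath(M)$ and $U$ solves \eqref{eq:extrinsic_heat_flow}.

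Finally, for \eqref{eq:C1_tau_bound} and the smoothness I would bound, for $t\in[T_1,T_0]$, $\|U(t)\|_{C^{1,\alpha}}\le\|e^{t\Delta_g}\hat\Phi\|_{C^{1,\alpha}}+\int_0^t\|e^{(t-s)\Delta_g}N(U,dU)(s)\|_{C^{1,\alpha}}\,ds$, using $\|e^{\tau\Delta_g}v\|_{C^{1,\alpha}}\le C\tau^{-\gamma}\|v\|_{L^{\oldp/2}}$ with $\gamma:=\tfrac n\oldp+\tfrac{1+\alpha}2$, which is $<1$ exactly because $\alpha<1-\tfrac{2n}\oldp$, together with $\|e^{t\Delta_g}\hat\Phi\|_{C^{1,\alpha}}\le C'T_1^{-\alpha/2}(1+\Lambda)$ for $t\ge T_1$; this yields the uniform $C^{1,\alpha}$ bound on $[T_1,T_0]$. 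Once $dU$ is bounded in $C^\alpha$ in the space variable, $N(U,dU)$ is Hölder continuous, and parabolic Schauder estimates followed by the standard bootstrap upgrade $U$ to a classical (indeed $C^\infty$) solution on $M\times(0,T_0]$. \textbf{The main obstacle} is the constraint-preservation step: the fixed-point scheme only yields an $\R^d$-valued map, and one must exploit the precise algebraic form of the nonlinearity (that it is the second fundamental form) to get the correct sign in the maximum-principle inequality for $\dist^2(U,\imath(M))$; a secondary technical point is establishing the heat-semigroup smoothing estimates $L^{\oldp/2}\to W^{1,\oldp}$ and $L^{\oldp/2}\to C^{1,\alpha}$ with the exponents above on a general compact Riemannian manifold, which is standard but requires some care.
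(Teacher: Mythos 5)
Your fixed-point construction and the derivation of \eqref{eq:W1p_bound_extrinsic_heat} and \eqref{eq:C1_tau_bound} follow essentially the same route as the paper: Duhamel plus the semigroup smoothing bounds $L^{\oldp/2}\to W^{1,\oldp}$ with exponent $\tfrac12+\tfrac{n}{2\oldp}<1$ and $L^{\oldp/2}\to C^{1,\alpha}$ with exponent $<1$ exactly because $\alpha<1-\tfrac{2n}{\oldp}$, followed by a bootstrap for classical regularity; the only cosmetic difference is that the paper contracts on a ball of radius $R$ in $W^{1,\oldp}$ and multiplies the projected nonlinearity by a cutoff $\eta$ near $\imath(M)$, while you center the ball at $e^{t\Delta_g}\hat\Phi$ so competitors automatically stay in the tubular neighbourhood. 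The genuine divergence is the constraint-preservation step. The paper takes the nonlinearity $(\mathbb A\circ\pi\circ U)\bigl(d(\pi\circ U),d(\pi\circ U)\bigr)$, splits $U=V+W$ with $V=\pi\circ U$ and $W$ normal along $V$, and uses the tangency statement \eqref{eq:tension_field_tangential} to get $\langle\partial_t W,W\rangle=\langle\Delta_g W,W\rangle$, so $t\mapsto\int_M|W_t|^2$ is non-increasing and $W\equiv 0$; the orthogonality makes the argument soft, with no curvature computation. You instead keep $dU$ in the nonlinearity and run the Bochner-type computation for $\psi=\dist^2(U,\imath(M))$; this does work: with $\rho=\tfrac12\dist^2$ one has $D\rho(y)=y-\pi(y)$ and $D^2\rho(y)=\Id-d\pi(y)$, and the term $\sum_i\langle\partial_i(U-\pi\circ U),\partial_i(\pi\circ U)\rangle$ produced by the Hessian exactly reproduces $\langle U-\pi(U),\,\mathbb A(\pi(U))(d(\pi\circ U),d(\pi\circ U))\rangle$ via \eqref{eq:2nd_form_normal} and \eqref{eq:definition_second_endos}, so the $\psi^{1/2}$-order contributions cancel and $(\partial_t-\Delta_g)\psi\le C|dU|^2\psi$ up to a favorable $-|d(U-\pi\circ U)|^2$ term, which is the cancellation you flagged as the crux.

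One caveat on your last step: since $|dU|^2$ is only controlled in $L^\infty_t L^{\oldp/2}_x$, you should not invoke a pointwise parabolic maximum principle literally; either test the inequality with $\psi$, use Sobolev interpolation to absorb the potential term (possible since $\oldp/2>n>n/2$), and conclude by Gronwall, or work on $[\eps,T_0]$ where the bootstrap makes $U$ smooth and let $\eps\to0$ using that $\psi(\cdot,t)\to0$ uniformly as $t\to0$. In effect this is the same kind of $L^2$ monotonicity the paper obtains for $\int_M|W|^2$, with the quadratic error handled by Gronwall rather than eliminated by orthogonality; what the paper's choice of nonlinearity buys is the complete absence of the distance-function computation, what yours buys is that, once the constraint is established, the equation is already in the form \eqref{eq:extrinsic_heat_flow} without having to note a posteriori that $dU=d(\pi\circ U)$.
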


\begin{proof}  For $u  \in W^{1,\oldp}(M; \R^d)$
with $\oldp > n$ the  corresponding elliptic equation $-\Delta_g U = (\mathbb A \circ U)(dU, dU)$
is subcritical and local existence and regularity for the initial value problem
 \eqref{eq:extrinsic_heat_flow}--\eqref{eq:extrinsic_heat_initial}
 follow from standard arguments for abstract
semilinear evolution equations, see, e.g.,  \cite{pruess_simonett}.

The starting point is that the Laplace-Beltrami operator   $\Delta_g$ acting on scalar functions
  is strongly elliptic, hence sectorial on $L^\oldp(M)$ and $W^{1,\oldp}(M)$ and therefore
generates an analytic semigroup on $L^\oldp(M)$ and on $W^{1,\oldp}(M)$,  for all $\oldp \in (1,\infty)$, see for example,   \cite{pruess_simonett}, Theorems 3.3.2, 6.1.10, 6.4.3 and Remark 6.1.4.
In particular,  the semigroup $S(t) := e^{t \Delta_g}$ satisfies  for all $t \in (0,1]$
the estimates
\begin{eqnarray}      \label{eq:W2q_estimate}
\| S(t) u_0\|_{W^{2,q}} &\le& C_q  t^{-1} \|u_0\|_{L^q}  ,  \\
 \label{eq:W1p_bound}
 \| S(t) u_0\|_{W^{1,\oldp}} &\le& C_\oldp  \|u_0\|_{W^{1,\oldp}}.
\end{eqnarray}
 Moreover, by the characterization of $D((-\Delta_g)^{\oldalpha})$ or by
 using the estimates $\|S(t) u_0\|_{L^q} \le C \|u_0\|_{L^q}$,  \eqref{eq:W2q_estimate},
and the Gagliardo-Nirenberg inequality we get
\begin{align}   \label{eq:fractional_integrability}
& \| S(t) u_0\|_{W^{1,\tmpexp}} \le C_{q,\tmpexp}  t^{-\oldalpha} \| u_0\|_{L^q},
\end{align}
for
$n<q\le b\le\infty$ and
\begin{align}
  & \oldalpha := \frac12 + \frac12 \left(\frac{n}{q} - \frac{n}{\tmpexp}\right) \in (0,1).  \nonumber
\end{align}
We now fix  $\oldp \in (2n, \infty)$ and we do not indicate dependence of the constants on $\oldp$.
 We show existence of a mild  solution in $C^0([0, T_0]; W^{1,\oldp}(M; \R^d))$
 by the usual fixed point argument.
  We first  extend the action of the semigroup   $S(t) = e^{t \Delta_g}$, which  acts on scalar functions,
   to $\R^d$-valued functions
by componentwise (i.e., diagonal) action.
Then  \eqref{eq:W1p_bound} implies that for $V \in W^{1,\oldp}(M; \R^d)$ we have
\begin{equation}  \label{eq:semigroup_initial}
\| S(t) V \|_{W^{1,\oldp}} \le C \|V\|_{W^{1,\oldp}}  \quad \text{for all $t \in [0, 1]$.}
\end{equation}
Moreover, application of      \eqref{eq:fractional_integrability}
with $\tmpexp=\oldp$ and $q = \oldp/2 > n$ gives
\begin{align}  \label{eq:semigroup_rhs}
\| S(t) V\|_{W^{1,\oldp}} \le & \,  C t^{-\frac12 - \frac{n}{2\oldp}} \|V\|_{L^{\oldp/2}}  \quad  \text{for all $t \in (0, 1]$.}
\end{align}

Now let  $T_0 \in (0,1]$, set $U_s := U(\cdot, s)$ and consider the space
$$ X := \{ U \in C^0([0,T_0]; W^{1,\oldp}(M;\R^d)) : \max_{t \in [0, T_0]} \|U(\cdot, t)\|_{W^{1,\oldp}} \le  R\}.$$
We would like to reformulate \eqref{eq:extrinsic_heat_flow} as a fixed point problem in $X$. 
The second fundamental form is, however, only defined for points in   $\imath(M)$ and a priori 
the solutions of  \eqref{eq:extrinsic_heat_flow} may take values outside $\imath(M)$. 
We thus consider the closest-point projection $\pi: \mathcal O \to \imath(M)$ where $\mathcal O$ is an
open neighbourhood in $\R^d$ of the compact set $\imath(M)$. Let $\eta$ be cut-off function in $C_c^\infty(\mathcal O)$ which is $1$ on a neighbourhood of $\imath(M)$.
Then we seek a fixed point of the operator
$$ (T U)(t) := S(t) \,\hat\Phi + \int_0^t  S(t-s)  \left[ (\eta \circ U_s) (\mathbb A\circ \pi \circ U_s)(d (\pi \circ U_s), d (\pi \circ U_s)) \right]ds.$$
By definition, a fixed point of $T$ is a mild solution of the equation
$$ \partial_t U = \Delta_g U + ( \eta \circ U)  \, (\mathbb A\circ \pi \circ U)(d (\pi \circ U), d (\pi \circ U))$$
and if $U$ takes values in $\imath(M)$ then $U$ is also a mild solution of \eqref{eq:extrinsic_heat_flow} and  \eqref{eq:extrinsic_heat_initial}.

Since $M$ is compact, the quadratic forms $\mathbb A(\imath(p))$ for $p \in M$ are uniformly bounded.
Moreover, $\pi$ is smooth with uniform bounds on the support of $\eta$.
Thus
for  $U \in X$ we have
$$  \|  (\eta \circ U_s) (\mathbb A\circ \pi \circ U_s)(d (\pi \circ U_s), d (\pi \circ U_s)) \|_{L^{r/2}} \le C R^2
\quad \text{
 for $s \in (0, T_0)$.}  $$
The estimates  \eqref{eq:semigroup_initial} and  \eqref{eq:semigroup_rhs}
imply that
\begin{align*}  \| T(t) U\|_{W^{1,\oldp}}
\le & \, C \|\hat\Phi\|_{W^{1,\oldp}} + \int_0^t  C (t-s)^{-\frac12 - \frac{n}{2\oldp}}   R^2 \, ds \\
\le & \, C_1 \Lambda + C_2 T_0^{\frac12- \frac{n}{2\oldp}} R^2.
\end{align*}
Taking
$$ R := \max( 2C_1 \Lambda, 1), \quad T_0 ^{\frac12- \frac{n}{2\oldp}}  \le \frac{1}{2C_2 R}$$
we see that the operator $T$ maps $X$ to itself.

\medskip

To show that $T$ is a contraction, 
we set $N[U_s] :=  (\eta \circ  U_s) (\mathbb A\circ \pi \circ U_s)(d (\pi \circ U_s), d (\pi \circ U_s)) $
and we note that for $U, V \in X$
\begin{align*}
  \| N[U_s] - N[V_s] \|_{L^{\oldp/2}}
\le  & \,
 \,  C R^2 \|U_s - V_s\|_{L^\infty}  +  C R \|U_s-V_s\|_{W^{1,\oldp}}  \\
  \le   & \,  C R^2 \|U_s-V_s\|_{W^{1,\oldp}}
\end{align*}
since $R \ge 1$.
Set $\| U\|_{X} := \max_{t \in [0, T]} \|U_t\|_{W^{1,\oldp}}$.
It follows from \eqref{eq:semigroup_rhs} that
\begin{align*}
 \| (TU)(t) - (TV)(t)\|_{W^{1,\oldp}}
 \le  & \,  \int_0^t C R^2  (t-s)^{-\frac12 - \frac{n}{2\oldp}} \, ds \, \,  \|U-V\|_X \\
 \le & \, C R^2 T_0^{\frac12- \frac{n}{2\oldp}}  \|U-V\|_X.
\end{align*}
Thus, if in addition,
$T_0^{\frac12- \frac{n}{2\oldp}}  \le \frac1{2 C R^2}$,
then $\| TU - TV\|_X \le \frac12 \|U-V\|_X$ for all $U, V \in X$.
Hence, by the Banach fixed point theorem,  $T$ has a unique fixed point in $X$.

\medskip

To prove the $C^{1, \alpha}$ estimate    \eqref{eq:C1_tau_bound}
we use that $U(t) = (TU)(t)$ and estimate the two terms in $TU(t)$ separately.
For the first term, we use \eqref{eq:W2q_estimate}  and the Sobolev embedding to get an estimate
in $C^{1, 1-n/\oldp}$.
For the second term, we use the estimate
$$\| S( t) V\|_{C^{1,\alpha}} \le C { t}^{-\gamma} \|V\|_{L^{\frac{\oldp}2}} $$
with $\alpha = \beta  (1 - \frac{2n}{\oldp})$, $\gamma = \beta + (1-\beta) \oldalpha$,
where $\beta \in (0,1)$ and  $\oldalpha = \frac12 + \frac{n}{\oldp} \in (0,1)$.
To get this estimate we can apply \eqref{eq:W2q_estimate}
and the embedding  $W^{2,\oldp/2} \hookrightarrow C^{1, 1-2n/\oldp}$ as well as the estimate
 \eqref{eq:fractional_integrability} with $\tmpexp=\infty$ and $q= \oldp/2$.

 \medskip

By a standard bootstrap argument, using, for example, that $\Delta_g$ also generates an analytic semigroup on $C^{k,\oldalpha}$ for $\oldalpha \in (0,1)$,
we see that $U$ is smooth for $t > 0$ and hence a classical solution.

\medskip 

It only remains to show that $U$ takes values in $\imath(M)$. It suffices to show this for a time interval $[0,T_2]$
with $T_2 >0$. By a continuation argument the assertion then holds on $[0, T_0]$. 
Since $W^{1,r} \hookrightarrow C^0$ for $r > n$, we know that $U_s$ converges uniformly to $\imath \circ  \Phi$ as $s \to 0$. Thus there exists a $T_2>0$ such that $\eta \circ U_s \equiv 1$ for all $s \in [0, T_2]$.
Now set $V := \pi \circ U$ and $W := U- V$.
Then, for $t \in (0,T_2],$
\begin{equation}  \label{eq:heat_extended}  \partial_t U = \Delta_g U + (\mathbb A \circ V)(dV, dV)
= \Delta_g W + \Delta_g V + (\mathbb A \circ V)(dV, dV).
\end{equation}
Since $V$ takes values in $\imath(M)$,  we have $\partial_t V(p,t) \in T_{V(p,t)}\imath(M)$. By
 \eqref{eq:tension_field_tangential}, $(\Delta_g V + (\mathbb A \circ V)(dV, dV))(p,t) \in T_{V(p,t)}\imath(M)$ and by definition $W(p,t) \in N_{V(p,t)}\imath(M)$.
 Using  \eqref{eq:heat_extended} we get, for $t \in (0,T_2]$,
 \begin{align*}
 \langle \partial_t W, W \rangle = \langle \partial_t U, W\rangle =\langle \Delta_g W, W \rangle.
 \end{align*} 
 Hence $t  \mapsto \int_M |W_t|^2 \, d\vol_M$ is non-increasing. Moreover,  $U_t \to \imath \circ \Phi$ uniformly  as $t \to 0$ and hence
 $W_t \to 0$ uniformly. Thus $W \equiv 0$.
\end{proof}

\subsection{Refined estimates for almost harmonic initial data}

\begin{theorem}   \label{th:heat_flow_close_to_Phi}
Let $\Lambda > 0$, $ \oldr \in (1, \infty)$ and $\alpha \in (0,1)$.
Then there exist $T_1 > 0$ and $C >0$ with the following property.
If  $\Phi \in W^{1,\infty}(M;M)$ with $|d\Phi| \le \Lambda$  almost everywhere
and if there exist
 $h \in L^\oldr(M; \R^d\otimes T^*M)$ and $h' \in L^\oldr(M;\R^d)$ such that,
letting $\hat\Phi:=\imath\circ\Phi$,
$$ -\Delta_g \hat\Phi - \mathbb A\circ \hat\Phi(d\hat\Phi, d\hat\Phi)
= \delta h + h',   $$
then the extrinsic  heat flow
\begin{equation}  \label{eq:extrinsic_heat_flow2}
 \partial_t U^l - \Delta_g U^l = (\mathbb A^l \circ U)(dU, dU), \quad \text{for $1 \le l \le d$},
 \end{equation}
 has a mild solution $U \in C^0([0, 2T_1]; W^{1,\oldr}(M; \R^d))$
with $U(0,\cdot) = \hat\Phi$
such that
\begin{equation} \label{eq:W1r_estimate_heat}
\sup_{t \in [0,  2T_1]}  \| U(\cdot, t) -\hat\Phi \|_{W^{1,\oldr}} \le C (\|h\|_{L^\oldr} + \|h'\|_{L^\oldr}),
\end{equation}
\begin{equation}  \label{eq:W1r_heat_C1_onehalf}
\sup_{t \in [T_1, 2T_1]}\| U(\cdot, t)\|_{C^{1,\alpha}} \le C.
\end{equation}
\end{theorem}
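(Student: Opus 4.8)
The plan is to run the extrinsic harmonic map heat flow starting from $\hat\Phi$, using Proposition~\ref{pr:short_time_extrinsic_heat_flow} to obtain existence together with a uniform high--integrability bound and the $C^{1,\alpha}$ estimate $\eqref{eq:W1r_heat_C1_onehalf}$, and then to prove the quantitative bound $\eqref{eq:W1r_estimate_heat}$ by Duhamel's formula for the difference $W:=U-\hat\Phi$, exploiting that by hypothesis $\hat\Phi$ is an \emph{approximate equilibrium} of the flow whose defect is exactly $\delta h+h'$. Concretely, I would fix an exponent $q_0=q_0(n,\oldr,\alpha)$ so large that $q_0>2n/(1-\alpha)$ (in particular $q_0>2n$), $q_0\ge\oldr$, and $q_0\ge 2\oldr/(\oldr-1)$, and apply Proposition~\ref{pr:short_time_extrinsic_heat_flow} with $q_0$ in the role of the exponent there and with the given $\alpha$ (admissible since $\alpha<1-2n/q_0$). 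This produces $T_0\in(0,1]$ and a mild solution $U\in C^0([0,T_0];W^{1,q_0}(M;\R^d))$ of $\eqref{eq:extrinsic_heat_flow2}$ with $U(0,\cdot)=\hat\Phi$, taking values in $\imath(M)$, classical for $t>0$, with $\sup_{[0,T_0]}\|U_t\|_{W^{1,q_0}}\le C(\Lambda)$ and, for every $T_1\in(0,T_0]$, $\sup_{[T_1,T_0]}\|U_t\|_{C^{1,\alpha}}\le C'$; since $W^{1,q_0}(M)\hookrightarrow W^{1,\oldr}(M)$ this $U$ also lies in $C^0([0,T_0];W^{1,\oldr})$. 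Once $2T_1\le T_0$ is fixed below, $\eqref{eq:W1r_heat_C1_onehalf}$ is the $C^{1,\alpha}$ bound restricted to $[T_1,2T_1]\subseteq[T_1,T_0]$, and it remains to establish $\eqref{eq:W1r_estimate_heat}$.

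Since $U$ takes values in $\imath(M)$, the cut--off and projection used in Proposition~\ref{pr:short_time_extrinsic_heat_flow} are inactive, so $U(t)=S(t)\hat\Phi+\int_0^t S(t-s)\,(\mathbb A\circ U_s)(dU_s,dU_s)\,ds$, where $S(t):=e^{t\Delta_g}$ acts componentwise on $\R^d$--valued functions. I would put $W:=U-\hat\Phi$ and $N(W):=(\mathbb A\circ U)(dU,dU)-(\mathbb A\circ\hat\Phi)(d\hat\Phi,d\hat\Phi)$. Because $U$ and $\hat\Phi$ take values in the compact manifold $\imath(M)$, on which $\mathbb A$ and $D\mathbb A$ are bounded, and because $|d\hat\Phi|=|d\Phi|\le\Lambda$, expanding $dU=d\hat\Phi+dW$ yields the pointwise bound $|N(W)|\le C(\Lambda)\bigl(|W|\,(1+|dU|^2)+|dW|\,(1+|dU|)\bigr)$. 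Inserting the hypothesis $-\Delta_g\hat\Phi=(\mathbb A\circ\hat\Phi)(d\hat\Phi,d\hat\Phi)+\delta h+h'$ into the mild formula, and using the telescoping identity $\int_0^t S(t-s)\,\delta h\,ds=(\Id-S(t))v$ — valid because $h$ is independent of time, where $v$ solves $-\Delta_g v=\delta h$ (determined up to an additive constant, which $\Id-S(t)$ annihilates) — one rearranges the formula for $U$ into
\[
W(t)=-(\Id-S(t))v-\int_0^t S(t-s)\,h'\,ds+\int_0^t S(t-s)\,N(W_s)\,ds .
\]

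Now one estimates the three terms in $W^{1,\oldr}$. The first is bounded by $C\|v\|_{W^{1,\oldr}}\le C\|h\|_{L^\oldr}$ by $L^\oldr$--elliptic regularity for $-\Delta_g$ on $M$; the time--independence of $h$ was essential here, since a termwise bound for $S(\tau)\delta h$ would only produce the non--integrable factor $\tau^{-1}$. The second is bounded by $\int_0^t C(t-s)^{-1/2}\|h'\|_{L^\oldr}\,ds\le C(2T_1)^{1/2}\|h'\|_{L^\oldr}$, using $\|S(\tau)w\|_{W^{1,\oldr}}\le C\tau^{-1/2}\|w\|_{L^\oldr}$. For the third, Hölder's inequality together with $\sup_s\|dU_s\|_{L^{q_0}}\le C(\Lambda)$ and the pointwise bound on $N(W)$ give $\|N(W_s)\|_{L^m}\le C(\Lambda)\|W_s\|_{W^{1,\oldr}}$, where $1/m=1/\oldr+2/q_0$ (and $m\ge1$ by the choice of $q_0$); then the heat--semigroup smoothing estimate $\|S(\tau)w\|_{W^{1,\oldr}}\le C\tau^{-\eta}\|w\|_{L^m}$ for $\tau\in(0,1]$, with $\eta=\tfrac12+\tfrac n2\bigl(\tfrac1m-\tfrac1\oldr\bigr)=\tfrac12+\tfrac n{q_0}<1$ (valid for $1\le m\le\oldr$ by the $L^p$–$L^q$ bounds for $e^{t\Delta_g}$ and $\nabla e^{t\Delta_g}$, cf.\ $\eqref{eq:fractional_integrability}$), yields $\bigl\|\int_0^t S(t-s)N(W_s)\,ds\bigr\|_{W^{1,\oldr}}\le C(\Lambda)(2T_1)^{1-\eta}\sup_{s\le t}\|W_s\|_{W^{1,\oldr}}$. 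Writing $\mu:=\sup_{t\in[0,2T_1]}\|W_t\|_{W^{1,\oldr}}$, which is finite since $U\in C^0([0,2T_1];W^{1,\oldr})$ and $W(0)=0$, one obtains $\mu\le C\bigl(\|h\|_{L^\oldr}+\|h'\|_{L^\oldr}\bigr)+C(\Lambda)(2T_1)^{1-\eta}\mu$. Choosing $2T_1\le T_0$ small, depending only on $\Lambda,\oldr,\alpha$, so that $C(\Lambda)(2T_1)^{1-\eta}\le\tfrac12$, one absorbs the last term and deduces $\eqref{eq:W1r_estimate_heat}$; alternatively one may retain $2T_1$ and invoke a singular Gronwall inequality.

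The two points I expect to need care are the following. First, the $\delta h$ contribution: since $h$ lies only in $L^\oldr$, the map $\tau\mapsto S(\tau)\delta h$ is not integrable near $\tau=0$, so one genuinely has to use that $h$ is constant in $t$ and that the Duhamel integral of $\delta h$ collapses to $(\Id-S(t))(-\Delta_g)^{-1}\delta h$, which elliptic regularity bounds by $\|h\|_{L^\oldr}$. Secondly, the nonlinearity difference $N(W)$ is only Lipschitz in $W$, with a constant depending on $\Lambda$ (not small), so the estimate closes only because of the subcritical gain $\tau^{-\eta}$, $\eta<1$, extracted from the high--integrability bound of Proposition~\ref{pr:short_time_extrinsic_heat_flow}, combined with the smallness of the time $2T_1$.
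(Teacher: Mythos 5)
Your proposal is correct and follows essentially the same route as the paper: apply Proposition~\ref{pr:short_time_extrinsic_heat_flow} with a large auxiliary exponent to get existence, the uniform high-integrability bound and the $C^{1,\alpha}$ estimate, then write the Duhamel formula for $U-\hat\Phi$, collapse the $\delta h$ contribution via the elliptic solution of $\Delta_g\Psi=\delta h$ (your $(\Id-S(t))v$ is the paper's $S(t)\Psi-\Psi$), treat $h'$ with the $\tau^{-1/2}$ smoothing, and absorb the nonlinear difference using a subcritical smoothing exponent $\eta<1$ and smallness of $T_1$. The only differences are bookkeeping (you use $|dU|^2\in L^{q_0/2}$ where the paper uses $|d\hat\Phi|\le\Lambda$ pointwise, giving $2/q_0$ in place of $1/p$ in the Hölder exponent), which is immaterial.
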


\begin{proof}  Let $ \oldp \in (2n, \infty)$ so large that
\begin{equation}   \label{eq:choice_p}
  \alpha < 1 - \frac{2n}{\oldp}  \quad \text{and}  \quad  \frac1\oldp + \frac1\oldr< 1.
\end{equation}
By
Proposition~\ref{pr:short_time_extrinsic_heat_flow},  there exists
a mild solution $U \in C^0([0, T_0]; W^{1,\oldp}(M; \R^d))$ of  \eqref{eq:extrinsic_heat_flow2}
with $U(0, \cdot) = \hat\Phi$.
We set $U_s := U(s, \cdot)$  and  we will derive an integro-differential equation for $V_s := U_s - \hat\Phi$.

\medskip

By duality, the action of the semigroup $S(t)$ can be extended to $W^{-1,\oldr}$ and we get
$$ S(t) \hat\Phi - \hat\Phi = \int_0^t S'(t-s)\hat\Phi \, ds = \int_0^t  S(t-s) \Delta_g \hat\Phi\,ds.$$
Together with the definition of a mild solution we obtain
    \begin{align} V_t =  & \, \int_0^t  S(t-s)  \left[(\mathbb A \circ U_s)(dU_s, dU_s)  + \Delta_g \hat\Phi  \right] \, ds
    \nonumber  \\
= & \,  \int_0^t  S(t-s) H_s \, ds -    \int_0^t  S(t-s)  (\delta h + h')   \, ds   \label{eq:equation_hs}
\end{align}
with
\begin{align*}
H_s :=  & \,   (\mathbb A \circ U_s)(dU_s, dU_s)  - (\mathbb A\circ \hat\Phi)(d\hat\Phi, d\hat\Phi)       \\
= & \,   (\mathbb A\circ U_s)( dU_s + d\hat\Phi, dU_s - d\hat\Phi)    \\
  +    & \,        \left[(\mathbb A \circ U_s)(d\hat\Phi, d\hat\Phi) - (\mathbb A \circ \hat\Phi)(d\hat\Phi,d\hat\Phi) \right].
\end{align*}
Define $q$ by $q^{-1} = \oldr^{-1} + \oldp^{-1}$.  It follows from the choice of $\oldp$ in  \eqref{eq:choice_p}
that $q \in (1, \oldr)$.
By \eqref{eq:W1p_bound_extrinsic_heat} we have   $\sup_{s \in [0, T_0]} \|U_s\|_{W^{1,\oldp}} \le C$ and thus
$$ \|H_s  \|_{L^q}  \le   C  \|V_s\|_{W^{1,\oldr}}. $$
Thus, using    \eqref{eq:equation_hs} and  \eqref{eq:fractional_integrability}, we obtain the integro-differential inequality
\begin{equation}   \label{eq:integro_differential_ineq}
\|V_t\|_{W^{1,\oldr}} \le  C_1 \int_0^t  (t-s)^{-\oldalpha} \,   \| V_s \|_{W^{1,\oldr}} \, ds
+ C_2
\end{equation}
where
$$ \oldalpha :=  \frac12 + \frac12 \left( \frac{n}{q} - \frac{n}{\oldr}\right) = \frac12 + \frac{n}{2\oldp}  \in \left(\frac12,1\right)$$
and
\begin{equation}
C_2  :=\sup_{t \in [0, T_0]} \left\| \int_0^t  S(t-s)  (\delta h + h')   \, ds  \right\|_{W^{1,\oldr}}.
\end{equation}
If we choose  $T_1 \in (0, T_0/2]$ such that $C_1 (1-\oldalpha)^{-1} (2T_1)^{1-\oldalpha} \le \frac12$ we get
$$ \sup_{t \in [0, 2T_1]} \| V_t\|_{W^{1,\oldr}}   \le 2 C_2.$$

\medskip

Thus it suffices to show that, for all $t \in [0, 2 T_1]$,
\begin{equation}  \label{eq:bound_by_harmonic_residuum}
\left\| \int_0^t  S(t-s)  (\delta h + h')   \, ds  \right\|_{W^{1,\oldr}} \le C (\|h \|_{L^\oldr} + \|h'\|_{L^\oldr}).
\end{equation}
The estimate for the term involving $h'$ is easy. Indeed, by \eqref{eq:fractional_integrability} with $q=\tmpexp=\oldr$ we get
$\|S(t-s) h'\|_{W^{1,\oldr}} \le C(t-s)^{-1/2}  \|h'\|_{L^\oldr}$,  and we can integrate in $s$.
To estimate the term involving  $h$,
we use the fact that there
exists a unique $\Psi \in W^{1,\oldr}(M; \R^d)$ with $\int_M \Psi \, d\vol_g = 0$ such that
$\Delta_g \Psi = \delta h$ and this map $\Psi$  satisfies
\begin{equation}   \label{eq:Lp_theory_forms}  \| \Psi\|_{W^{1,\oldr}} \le C \| h\|_{L^\oldr}.
\end{equation}
This follows from the standard existence and regularity theory for the
Laplace operator acting on forms given by $\Delta = -(d\delta + \delta d)$,
see, for example \cite{morrey}, Chapters 7.3 and 7.4.
Using again the extension of the semigroup to $W^{-1,\oldr}$ we get
$$ \int_0^t S(t-s) \delta h \, ds = \int_0^t  S(t-s) \Delta_g \Psi \, ds = S(t)  \Psi - \Psi.$$
Now the estimate   \eqref{eq:bound_by_harmonic_residuum} follows from  \eqref{eq:Lp_theory_forms} and
 \eqref{eq:W1p_bound}.

 \medskip

 Finally, the estimate  \eqref{eq:W1r_heat_C1_onehalf} follows directly from   \eqref{eq:C1_tau_bound}.
 \end{proof}

\section{Linearization of the metric deficit equation and rigidity estimates close to the identity}
\label{se:linearization_metric_deficit}

For a map $f: M \to M$ we can  measure the deviation of $f$ from an isometric immersion by  the metric deficit
$$ f^*g  - g.$$
One key feature of the metric deficit is that it is invariant under the left action of isometries since
\begin{equation} \label{eq:invarianc_metric_deficit} \forall \phi \in \mathrm{Isom}(M) \quad (\phi \circ f)^*g = f^*(\phi^*g) = f^*g.
\end{equation}
If $|df|$ is bounded then it is easy to get the pointwise estimate
\begin{equation}   \label{eq:pointwise_bound_deficit}
|f^*g - g|(p) \le C \dist(df(p), SO(T_pM, T_{f(p)} M))
\end{equation}
by writing  $A = (A-Q) + Q$ in the  expression $g(Aa, Ab)$ with
$A \in \lin(T_p M, T_{f(p)} M)$ and $Q \in SO(T_pM, T_{f(p)} M)$ and optimizing over $Q$.

\medskip

Thus for the proof of our main result  is enough to show that
\begin{equation}  \label{eq:estimate_by_metric_deficit}
 \inf_{\phi\in\text{Isom}_+(M)} \| \imath \circ (\phi \circ f) - \imath \circ \id\|_{W^{1,p}} \le C   \| f^* g - g\|_{L^p}.
 \end{equation}
Note that by
\eqref{eq:dfg_quasiinvariant_isometry} the left hand side
of  \eqref{eq:estimate_by_metric_deficit}
is equivalent to    $\| \imath \circ f - \imath \circ \phi^{-1}\|_{W^{1,p}}$.

The key observation for the proof of this estimate is that the linearization (in a sense to be made precise)
of the metric deficit equation   $f^*g  - g = h$ is given by the (elliptic)
equation
\begin{equation}  \label{eq:deficit_linearized} \nabla X + (\nabla X)^T = h^\sharp
\end{equation}
where $h^\sharp$ is the $1-1$ tensor associated to the $0-2$ tensor $h$, i.e., $h(a,b) = g(h^\sharp a, b)$,
and where the vector field $X: M \to TM$ is related to $f$ by
$$ f = \exp X.$$

\medskip

To establish the desired estimate,  we  proceed as follows.
We first show that if $f$ is $C^0$ close to the identity then $f$ can be written as $f = \exp X$ and $X$ inherits
the smoothness properties of $f$, see Proposition~\ref{pr:exists_Exp_inverse} and
Lemma \ref{le:extrinsic_vs_vectorfields}.

 Next,  we make the key  observation  that  the metric deficit  $(\exp X)^* g- g$ at $p$ depends only on $X(p)$ and $\nabla X(p)$
and for $X$ small in $C^1$ is given approximately by $g(   [\nabla X + (\nabla X)^T] \cdot, \cdot)$,
see  Proposition~\ref{pr:metric_deficit}.

Thus we can hope to estimate $X$ in terms of the metric deficit $(\exp X)^* g - g$ up to solutions of
$\nabla X + (\nabla X)^T = 0$. Solutions of this equation are called Killing fields and the operator
$X \mapsto \nabla X + (\nabla X)^T$ is elliptic.
If there are no Killing fields,  then we immediately obtain an optimal  $W^{1,p}$ estimate for $X$ in terms of
$(\exp X)^* g - g$, provided that $\|X\|_{C^1}$ is sufficiently small, see  Proposition~\ref{pr:estimate_without_killing}.

If there are non-trivial Killing fields we recall the classical fact  that the space of Killing fields
is   finite dimensional and  can be viewed as the tangent space at the identity  of the Lie group
$\mathrm{Isom}(M)$ of isometries of $M$, see  Theorem~\ref{th:isom_lie}. In particular, all isometries close to the identity are generated by the flow of Killing fields at time one.

We now can use the invariance   \eqref {eq:invarianc_metric_deficit}  of the metric deficit under isometries
to obtain a new vector field $\overline X$ which generates the same metric deficit and is almost
orthogonal to all Killing fields. To do so, we minimize the $L^2$ norm  over vector fields $X_K$ defined by
$\exp X_K = \phi_K \circ \exp X$ where $\phi_K$ is the isometry generated by the flow of the Killing field
$K$, see Lemma~\ref{le:exists_minimizier_killing}.

Finally,  an easy argument by contradiction shows that the $W^{1,p}$ norm of $\overline X$ is controlled
by the $L^p$ norm of the metric deficit $(\exp X)^*g-g$, see Theorem~\ref{th:C1_rigidity_new}.
This immediately  yields the desired estimate   \eqref{eq:estimate_by_metric_deficit},
see Corollary~\ref{co:C1_rigidity_maps}.

The reasoning  ultimately rests on a) soft  arguments, based on smoothness, the proof of estimates by contradiction and compactness, and the Lie group structure of the group of isometries,  and b)  two easy calculations which exploit the commutativity of second derivatives, namely   \eqref{eq:commute_second_deficit} and
\eqref{eq:commute_derivatives_nabla_Psi_bounds}.

\subsection{From maps to vector fields}
\label{secfrommapstovecfields}
We use the fact that
a map $f:M\to M$ which is sufficiently close to the identity can be written in the form $f=\exp X$. We define the map $\Exp: TM \to M \times M$ by
\begin{equation} 
\Exp := (\pi, \exp).
\end{equation}
Thus for $v \in T_p M$ one has $\Exp \, v = (p, \exp_p v)$.

The injectivity radius $\mathrm{inj}(M)$ of the manifold $M$ is the largest value $r$
such that for each $p \in M$, the map $\exp_p$ is an embedding of the open ball or radius $r$ in $T_p M$,
see \cite{Cheeger-Ebin}, Definition 5.5.
In particular,  one has $d(p, \exp_p v) = |v|$ if $|v| < \mathrm{inj}(M)$, see also
\cite{Cheeger-Ebin}, Definition 5.5.
For  a compact  manifold $\mathrm{inj}(M) > 0$ and in fact $\mathrm{inj}(M)$ can be characterized by
two points $p$, $q \in M$ with $d(p,q) = \mathrm{inj}(M)$, see \cite{Cheeger-Ebin}, Lemma 5.6.

\begin{proposition}  \label{pr:exists_Exp_inverse}  Let $\delta \in (0, \mathrm{inj}(M)]$.
Set
$$ U_\delta := \{ v \in TM : |v| < \delta\},  \qquad D_\delta := \{(p,q) \in M \times M : d(p,q) < \delta \}.$$
Then
\begin{equation} \Exp: U_\delta \to D_\delta \quad \text{is a smooth diffeomorphism.}
\end{equation}
In particular, if $f : M \to M$ is a $C^1$ map such that $d(p,f(p)) < \mathrm{inj}(M)$  for all $p \in M$, then
$$ X := \Exp^{-1} \circ (\id, f)$$
is a $C^1$ vector field with $|X(p)| < \mathrm{inj}(M)$ and
$ f = \exp X$.
\end{proposition}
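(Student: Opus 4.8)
The plan is to show that $\Exp = (\pi,\exp)$ is a smooth diffeomorphism from $U_\delta$ onto $D_\delta$ by establishing, in order: smoothness, bijectivity, and local invertibility (from which smoothness of the inverse follows by the inverse function theorem). Smoothness of $\Exp$ is immediate: $\pi: TM \to M$ is the smooth bundle projection, and $\exp: TM \to M$ is smooth on all of $TM$ for a compact (hence geodesically complete) manifold, being the time-one map of the geodesic flow on $TM$. Thus $\Exp$ is a smooth map $TM \to M \times M$, and it clearly maps $U_\delta$ into $D_\delta$ because $\delta \le \inj(M)$ forces $d(p,\exp_p v) = |v| < \delta$ whenever $|v| < \delta$.

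Next I would verify that $\Exp|_{U_\delta}$ is a bijection onto $D_\delta$. Given $(p,q) \in D_\delta$, any preimage must lie in $T_pM$ (since the first component is $\pi$), so we need: for each $p$, the restriction $\exp_p$ maps the open $\delta$-ball $B_\delta(0) \subset T_pM$ bijectively onto $\{q : d(p,q) < \delta\}$. Injectivity of $\exp_p$ on $B_\delta(0)$ is exactly the definition of the injectivity radius (via $\delta \le \inj(M)$), together with the standard fact that $\exp_p$ restricted to that ball is an embedding. For surjectivity onto the metric $\delta$-ball: if $d(p,q) < \delta$ then, since $M$ is complete, there is a minimizing geodesic from $p$ to $q$ of length $d(p,q) < \delta$, and its initial velocity $v \in T_pM$ has $|v| = d(p,q) < \delta$ and $\exp_p v = q$. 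Hence $\Exp|_{U_\delta}$ is a bijection onto $D_\delta$.

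For the inverse being smooth, it suffices to show $d\Exp$ is invertible at every $v \in U_\delta$; then $\Exp|_{U_\delta}$ is a smooth bijection that is a local diffeomorphism everywhere, hence a (global) diffeomorphism onto its image $D_\delta$. At $v \in T_pM$, writing the tangent space $T_v(TM)$ via the standard splitting into horizontal and vertical subspaces, $d\pi_v$ kills the vertical part and is the identity on the horizontal part, while $d(\exp_p)_v$ on the vertical part is the usual differential of $\exp_p$ at $v$, which is an isomorphism precisely because $v$ lies strictly inside the injectivity radius ball so there are no conjugate points along the geodesic up to parameter one. A short linear-algebra check on the block structure of $d\Exp_v = (d\pi_v, d\exp_v)$ then shows it is an isomorphism $T_v(TM) \to T_pM \oplus T_qM$. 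The main (and only mildly delicate) point is this nonvanishing of the Jacobian, i.e. the absence of conjugate points inside the injectivity-radius ball; everything else is bookkeeping with the definitions recalled just above the statement.

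Finally, for the ``in particular'' clause: if $f \in C^1(M;M)$ with $d(p,f(p)) < \inj(M)$ for all $p$, then $(\id, f): M \to M\times M$ is $C^1$ and takes values in $D_{\inj(M)}$, so $X := \Exp^{-1} \circ (\id,f)$ is a well-defined $C^1$ section of $TM$, i.e. a $C^1$ vector field, with $|X(p)| = d(p,f(p)) < \inj(M)$ and $\pi(X(p)) = p$, $\exp(X(p)) = \exp_p X(p) = f(p)$, that is $f = \exp X$.
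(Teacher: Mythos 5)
Your proposal is correct and follows essentially the same route as the paper: fiberwise, $\exp_p$ is a diffeomorphism from the $\delta$-ball in $T_pM$ onto the metric $\delta$-ball (injectivity radius for injectivity, completeness/minimizing geodesics for surjectivity), so $\Exp$ is a bijection of $U_\delta$ onto $D_\delta$ whose differential is invertible, and the (global) inverse function theorem gives smoothness of $\Exp^{-1}$, with the ``in particular'' clause following by composition/chain rule. You merely spell out the local-invertibility step (horizontal--vertical splitting, absence of conjugate points inside the injectivity-radius ball) that the paper compresses into the assertion that $\Exp$ is a bijective immersion.
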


\begin{proof}    The assertion for $\Exp$ follows from the fact that $\exp_p$ is a  smooth diffeomorphism from $B_{\delta}(0) \subset T_p M$
to $B_\delta(p) \subset M$. Hence $\Exp$ is a bijective immersion from $U_\delta$ to $D_\delta$.
By the global inverse function theorem $\Exp^{-1}$ is smooth.
The assertions for $X$ then follow from the chain rule.
\end{proof}

\medskip

Recall that we fixed a smooth isometric embedding   $\imath: M \to \R^d$. For maps $f$, $g: M \to M$
we seek to estimate the distance defined in \eqref{eq:dist_W1p(M,M)},
$$  d_{1,p}(f,g) := \| \imath\circ f - \imath \circ g\|_{W^{1,p}}.$$
If $f = \exp X$ and $g = \exp Y$ then the following pointwise estimate ensures that
$d_{1,p}(f,g)$ is equivalent to $\| X- Y\|_{L^p} + \| \nabla X - \nabla Y\|_{L^p}$,
provided that $d(f(p), p) \le \frac12 \mathrm{inj}(M)$, $d(g(p),p) \le \frac12 \mathrm{inj}(M)$
and one of the functions $\imath \circ f$ or $\imath \circ g$ is in $W^{1,\infty}$.

\begin{lemma} \label{le:extrinsic_vs_vectorfields}
 Let $L>0$. There exist constants $C > 0$, $C_L > 0$ with the following property. If $\delta = \frac12 \mathrm{inj}(M)$
 and $X, Y: M \to U_\delta \subset TM$ are $C^1$ vector fields, then the maps
 $f = \exp X$ and $g = \exp Y$ satisfy
\begin{equation}  \label{le:extrinsic_pointwise}   C^{-1} |X(p) - Y(p)| \le  |(\imath \circ f)(p) - (\imath \circ g)(p)|_{\R^d} \le C |X(p) - Y(p)|.
\end{equation}
Moreover, if  $\min(|\nabla X(p)|, |\nabla Y(p)|) \le L$ then
\begin{align}
  |d (\imath \circ f)(p) - d (\imath \circ g)(p)|_{\R^d \otimes T^*_pM}
\le   & \,
  C  |\nabla X(p) - \nabla Y(p)|  \nonumber   \\
& \,   +   C_L |X(p) - Y(p)|.
   \label{le:extrinsic_pointwise_gradient}
\end{align}
If $\min(|d(\imath\circ f)(p)|, |d(\imath\circ g)(p)|) \le L$ then
\begin{align}
  |\nabla X(p) - \nabla Y(p)|   \nonumber
  \le   & \,   C \,   |d(\imath\circ f)(p) - d(\imath \circ g)(p)|_{\R^d \otimes T^*_pM}    \\
& \,   +
 C_L \,  |(\imath \circ f)(p) - (\imath \circ g)(p)|_{\R^d}.  \label{le:extrinsic_pointwise_gradient_lower}
\end{align}
\end{lemma}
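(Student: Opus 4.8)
The plan is to prove the pointwise estimates in Lemma~\ref{le:extrinsic_vs_vectorfields} by working in the single smooth map $\Exp = (\pi,\exp): U_\delta \to D_\delta$, whose smoothness and diffeomorphism property are guaranteed by Proposition~\ref{pr:exists_Exp_inverse}. Define $\Psi := \imath\circ\exp : U_\delta \to \R^d$, so that $\imath\circ f = \Psi\circ X$ and $\imath\circ g = \Psi\circ Y$ as maps $M\to\R^d$. Here $\Psi$ is smooth on the compact closure of $U_\delta$ (shrink $\delta$ negligibly or use that everything extends continuously), hence $\Psi$ and all its derivatives up to second order are uniformly bounded, and — this is the crucial point for the lower bounds — $d\Psi(v)$ is injective for every $v\in \overline{U_\delta}$ since $\exp$ restricted to each ball is a diffeomorphism and $\imath$ is an immersion; by compactness there is a uniform lower bound on the smallest singular value of $d\Psi(v)$.

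For \eqref{le:extrinsic_pointwise}: at a fixed $p$, $X(p)$ and $Y(p)$ lie in the same fiber $T_pM$ (both $X$ and $Y$ are sections), so $(\imath\circ f)(p)-(\imath\circ g)(p) = \Psi(X(p)) - \Psi(Y(p))$ with $\Psi$ restricted to the ball $B_\delta(0)\subset T_pM$. The upper bound is the Lipschitz estimate $|\Psi(v)-\Psi(w)|\le \|d\Psi\|_\infty |v-w|$, and the lower bound is the bi-Lipschitz estimate for the embedding $v\mapsto \imath(\exp_p v)$ on $B_\delta(0)$, uniform in $p$ by compactness of $M$ (same argument as in Lemma~\ref{le:distance_TM_via_imath}\ref{le:distance_TM_via_imathp}). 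The constant $C$ is then uniform.

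For the derivative estimates I would differentiate $\imath\circ f = \Psi\circ X$ along a curve. Fix $p$ and $e\in T_pM$; choosing a curve $\gamma$ in $M$ with $\gamma(0)=p$, $\gamma'(0)=e$, and comparing with the covariant derivative, one gets $d(\imath\circ f)(p)[e] = d\Psi(X(p))\big[\nabla_e X\big] + (\text{a term linear in } X(p) \text{ with smooth bounded coefficients depending on } e)$; schematically $d(\imath\circ f)(p) = d\Psi(X(p))\circ \nabla X(p) + R(p,X(p))$ where $R$ is smooth in its arguments and bounded, and vanishes when $X(p)=0$ (since then $\exp_p$ is a radial isometry at the origin, but in any case Taylor expansion in the second slot suffices). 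Subtracting the analogous identity for $g$,
\[
 d(\imath\circ f)(p) - d(\imath\circ g)(p) = d\Psi(X(p))\big[\nabla X(p)-\nabla Y(p)\big] + \big(d\Psi(X(p))-d\Psi(Y(p))\big)\nabla Y(p) + \big(R(p,X(p))-R(p,Y(p))\big).
\]
The middle term is bounded by $\|d^2\Psi\|_\infty |X(p)-Y(p)|\,|\nabla Y(p)| \le C_L|X(p)-Y(p)|$ using $|\nabla Y(p)|\le L$ (and symmetrically if instead $|\nabla X(p)|\le L$, rewrite using $d\Psi(Y(p))\nabla X(p)$); the last term is bounded by $C_L|X(p)-Y(p)|$ by smoothness of $R$; and the first term is bounded by $\|d\Psi\|_\infty|\nabla X(p)-\nabla Y(p)|$. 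This gives \eqref{le:extrinsic_pointwise_gradient}. For \eqref{le:extrinsic_pointwise_gradient_lower} I invert: from the same display, $d\Psi(X(p))[\nabla X(p)-\nabla Y(p)]$ equals $d(\imath\circ f)(p)-d(\imath\circ g)(p)$ minus the two lower-order terms, and since $d\Psi(X(p))$ has a uniform left-inverse (uniform lower bound on singular values), $|\nabla X(p)-\nabla Y(p)| \le C\big(|d(\imath\circ f)(p)-d(\imath\circ g)(p)| + C_L|X(p)-Y(p)|\big)$, after bounding $|\nabla Y(p)|$ (or $|\nabla X(p)|$) in terms of $|d(\imath\circ g)(p)|$ (resp.\ $|d(\imath\circ f)(p)|\le L$) via the already-established relation $|d(\imath\circ g)(p)| \gtrsim |\nabla Y(p)| - C|Y(p)|$ with $|Y(p)|\le\delta$ bounded — so $|\nabla Y(p)|\le C(L+\delta)$. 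Finally convert $|X(p)-Y(p)|$ back to $|(\imath\circ f)(p)-(\imath\circ g)(p)|$ via the lower bound in \eqref{le:extrinsic_pointwise} to reach \eqref{le:extrinsic_pointwise_gradient_lower}.

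The main obstacle is bookkeeping the decomposition $d(\imath\circ f)(p) = d\Psi(X(p))\circ\nabla X(p) + R(p,X(p))$ cleanly: one must verify that differentiating $\Psi\circ X$ really produces the covariant derivative $\nabla X$ (not just the chart-dependent partials) plus a remainder that is \emph{smooth and bounded in} $(p, X(p))$ jointly over the compact set $M\times \overline{U_\delta}$, so that all the constants coming from Taylor expansion in the $X(p)$-slot are uniform. This is where one uses that $\Exp$ is a smooth map on $U_\delta$, not merely fiberwise smooth: the coefficients relating $\partial_e(\Psi\circ X)$ to $\nabla_e X$ are smooth functions on $TM$ restricted to $U_\delta$ (built from Christoffel symbols and $d\Psi$), hence uniformly bounded by compactness. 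Once this uniform smooth structure is in place, all four inequalities follow by the elementary linear-algebra manipulations above, with $C$ depending only on $M$ and $\imath$, and $C_L$ additionally on $L$.
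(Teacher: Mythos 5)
Your argument is essentially the paper's proof: your decomposition $d(\imath\circ f)(p)=d\Psi(X(p))\circ\nabla X(p)+R(p,X(p))$ is exactly the horizontal/vertical chain rule of Proposition~\ref{pr:chainrule} applied to $F=\imath\circ\exp$ (your $d\Psi(X(p))$ is the vertical derivative $F_2$, your $R$ is the horizontal derivative $F_1$), and the lower bound likewise rests on the uniform invertibility of $F_2(w)$ for $|w|\le\tfrac12\inj(M)$ together with the bound $|\nabla Y(p)|\lesssim |d(\imath\circ g)(p)|+|Y(p)|+1$ and the symmetry reductions, just as in the paper. Two harmless slips: the \emph{full} differential $d\Psi(v)$ on $T_v(TM)$ cannot be injective (only its restriction to the fiber direction is, which is what you actually use), and $R(p,0)$ does not vanish (it equals $d\imath(p)$), but neither fact is needed since only Lipschitz continuity of $R$ in the fiber slot enters.
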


For the proof of this lemma  and for later results we use the following version of the chain rule.

\begin{proposition}   \label{pr:chainrule} Let $M$ and $N$ be smooth Riemannian  manifolds.
\begin{enumerate}
\item Let $F: TM \to N$ be smooth. For $v$, $w$, $z \in T_p M$   define the horizontal derivative $F_1$ and the vertical derivative $F_2$
by
\begin{equation}
 F_1(w) v  := \frac{d}{dt}|_{t=0} F(P_t w),  \qquad
 F_2(w) z := \frac{d}{dt}|_{t=0} F(w+ t z).
  \end{equation}
Here $P_t$ denotes the parallel transport along a curve $\gamma$ with $\gamma'(0) = v$.
Then $F_1$ and $F_2$ are smooth and\begin{equation} \label{eq:chainrule_N}  d(F \circ X)(v) = F_1(X(p)) v + F_2(X(p))(\nabla_v X)
\end{equation}
for  $v \in T_p M$ and a $C^1$ vector field $X: M \to TM$.

\item Let $\Psi: TM \to TM$ be a smooth bundle map, i.e., a smooth map such that $\Psi(T_p M) \subset T_p M$.  Let
\begin{equation}
 \Psi_1(w) v  := \frac{D}{dt}|_{t=0} \Psi(P_t w),  \qquad
 \Psi_2(w) z := \frac{d}{dt}|_{t=0} \Psi(w+ t z),
  \end{equation}
  for $v$, $w$, $z \in T_p M$.
Then $\Psi_1$ and $\Psi_2$ are smooth and

\begin{equation} \label{eq:chainrule_TM}  \nabla_v (\Psi \circ X) = \Psi_1(X(p)) v + \Psi_2(X(p))(\nabla_v X)
\end{equation}
for  $v \in T_p M$ and a $C^1$ vector field $X: M \to TM$.
\end{enumerate}
\end{proposition}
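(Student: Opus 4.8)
The plan is to phrase everything in terms of the splitting of the double tangent bundle $T(TM)$ into its horizontal and vertical subbundles induced by the Levi-Civita connection. For $w\in T_pM$ let $\hor_w\colon T_pM\to T_w(TM)$ be the horizontal lift, $\hor_w(v)=\frac{d}{dt}|_{t=0}P_tw$ for any curve with initial velocity $v$ (here $P_t$ denotes parallel transport along that curve), and let $\ver_w\colon T_pM\to T_w(TM)$ be the vertical lift, $\ver_w(z)=\frac{d}{dt}|_{t=0}(w+tz)$; both depend smoothly on $w$, are linear, satisfy $d\pi\circ\hor_w=\id_{T_pM}$ and $d\pi\circ\ver_w=0$, and give a direct sum $T_w(TM)=\hor_w(T_pM)\oplus\ver_w(T_pM)$. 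Since $\frac{d}{dt}|_{t=0}P_tw$ depends only on $v$, not on the chosen curve, the formulas $F_1(w)v=dF(w)\,\hor_w(v)$ and $F_2(w)z=dF(w)\,\ver_w(z)$ show that $F_1$, $F_2$ are well defined, smooth in $w$ and linear in the second slot; the same remark applies to $\Psi$.

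The one genuinely geometric ingredient I would isolate as a lemma is: for every $C^1$ vector field $Z\colon M\to TM$ and every $v\in T_pM$,
\[
dZ(p)\,v\;=\;\hor_{Z(p)}(v)\;+\;\ver_{Z(p)}(\nabla_vZ).
\]
To prove it, fix a curve $\gamma$ with $\gamma(0)=p$, $\gamma'(0)=v$ and set $Y(t):=(P_t^\gamma)^{-1}Z(\gamma(t))\in T_pM$, so $Z(\gamma(t))=P_t^\gamma Y(t)$, $Y(0)=Z(p)$, and $Y'(0)=\nabla_vZ$ by the definition of the covariant derivative. Differentiating $t\mapsto P_t^\gamma Y(t)$ at $t=0$ and splitting into the two partial contributions gives $\frac{d}{dt}|_{t=0}P_t^\gamma Y(0)$, which is the velocity of the horizontal lift of $\gamma$ through $Z(p)$, hence $\hor_{Z(p)}(v)$, plus $\frac{d}{dt}|_{t=0}P_0^\gamma Y(t)=Y'(0)$, a curve inside the fibre $T_pM$, hence $\ver_{Z(p)}(\nabla_vZ)$.

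Granting the lemma, part (i), i.e.\ \eqref{eq:chainrule_N}, is immediate: the ordinary chain rule gives $d(F\circ X)(v)=dF(X(p))\,dX(p)\,v$, and substituting the lemma with $Z=X$ and using linearity of $dF(X(p))$ yields $d(F\circ X)(v)=F_1(X(p))v+F_2(X(p))(\nabla_vX)$. For part (ii), i.e.\ \eqref{eq:chainrule_TM}, I would apply part (i) with $N=TM$ and $F=\Psi$ to compute $d(\Psi\circ X)(v)\in T_{(\Psi\circ X)(p)}(TM)$, and then read off its vertical component: applied to $Z=\Psi\circ X$, the lemma says precisely that this vertical component is $\ver_{(\Psi\circ X)(p)}(\nabla_v(\Psi\circ X))$. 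It then remains to match the vertical parts of the part-(i) derivatives of $\Psi$ with the quantities in the statement: the curve $t\mapsto\Psi(w+tz)$ stays in the fibre $T_pM$, so $d\Psi(w)\ver_w(z)=\ver_w(\Psi_2(w)z)$; and the curve $t\mapsto\Psi(P_tw)$ lies over $\gamma$, so the vertical part of its velocity $d\Psi(w)\hor_w(v)$ is, by the definition of the covariant derivative along a curve, exactly $\ver_w\big(\frac{D}{dt}|_{t=0}\Psi(P_tw)\big)=\ver_w(\Psi_1(w)v)$. Smoothness and linearity of $\Psi_1$, $\Psi_2$ follow as before, now using also smoothness of the vertical projection of $T(TM)$ onto its vertical subbundle.

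The delicate part — and the one I would spell out in full — is the horizontal/vertical calculus: the construction and smoothness of $\hor_w$, $\ver_w$, the direct sum decomposition, and the split-differentiation step in the lemma; everything past that is a formal application of the chain rule. As a fully elementary alternative (or as a check) one can run the computation in local coordinates $(x^i,\xi^j)$ on $TM$ with Christoffel symbols $\Gamma^k_{ij}$: there $F_2(w)z=\partial_{\xi^j}F\,z^j$ and $F_1(w)v=(\partial_{x^i}F-\Gamma^j_{ik}\xi^k\,\partial_{\xi^j}F)\,v^i$, and \eqref{eq:chainrule_N} becomes the classical chain rule for $F(x,\xi(x))$ combined with the identity $\partial_{x^i}\xi^j\,v^i=(\nabla_vX)^j-\Gamma^j_{ik}\xi^k v^i$; part (ii) is the same substitution carried out on the components of the vector field $\Psi\circ X$.
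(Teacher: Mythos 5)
Your argument is correct, but it carries out in detail the route that the paper's proof only mentions in passing before choosing a different one: you work intrinsically with the horizontal/vertical splitting of $T(TM)$ induced by the Levi-Civita connection, isolating as a lemma the connector identity $dZ(p)\,v=\hor_{Z(p)}(v)+\ver_{Z(p)}(\nabla_v Z)$, proved via the parallel-transport trivialization $Z(\gamma(t))=P_t^{\gamma}Y(t)$; after that, part (i) is the plain chain rule and part (ii) follows by applying part (i) with $F=\Psi$ and comparing vertical components. The paper instead gives the details of its "alternative" path: it computes in the chart $\phi=d\varphi$ on $TU$, expresses $\nabla_v X$ and the two lifts through the Christoffel symbols, and obtains the explicit local formulas \eqref{eq:local_F1}--\eqref{eq:local_F2}, which at once give smoothness of $F_1$, $F_2$ and the identity \eqref{eq:chainrule_N}, with \eqref{eq:chainrule_TM} obtained by the same computation simplified in normal coordinates. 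What your route buys is a coordinate-free argument in which part (ii) is essentially a corollary of part (i) together with the identification of the vertical part of $d\Psi(w)\hor_w(v)$ with $\frac{D}{dt}|_{t=0}\Psi(P_t w)$; what the paper's route buys is a short, self-contained calculation that never needs the (standard but not entirely free) facts that $\hor_w$, $\ver_w$ and the vertical projection are smooth and that the covariant derivative of a field along a curve is the connector applied to its velocity --- exactly the ingredients you rightly flag as the ones to spell out, and your closing coordinate computation is in substance the paper's proof. One small notational slip: in part (ii) the curves $t\mapsto\Psi(w+tz)$ and $t\mapsto\Psi(P_t w)$ have velocities in $T_{\Psi(w)}(TM)$, so their vertical parts should be written $\ver_{\Psi(w)}(\Psi_2(w)z)$ and $\ver_{\Psi(w)}(\Psi_1(w)v)$ rather than with base point $w$; since both vertical spaces are canonically identified with $T_pM$, this does not affect the argument.
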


\begin{proof}  This follows directly from the  fact that the Levi-Civita connection gives a splitting of the
bitangent space $T_w TM$ into a horizontal  and a vertical subspace. Alternatively, one can verify
 \eqref{eq:chainrule_N} and  \eqref{eq:chainrule_TM}  by a short calculation in local coordinates.

For the convenience of the reader, we provide some details. Let $U \subset M$ be open with $p \in U$ and let $\varphi: U \to \varphi(U) \subset \R^n$ be a chart.
Then $\phi := d\varphi : TU \to \varphi(U) \times \R^n$ is a chart for $TU$. To show  \eqref{eq:chainrule_N},
set $\tilde F := F \circ \phi^{-1}$. Denote by $\phi_1$ and $\phi_2$ the first and second component of $\phi$
and by $d_1\tilde F$ and $d_2 \tilde F$ the derivative with respect to the first and second argument.
Let $\Gamma^k_{ij}$ be the Christoffel symbols of $\phi$ and let  $\Gamma: T_p M \times T_p M \to T_pM$
be the bilinear map with
$\Gamma(\frac{\partial}{\partial x^{i}}, \frac{\partial}{\partial x^{j}}) = \Gamma^k_{ij} \frac{\partial}{\partial x^{k}}$.
Then

\begin{eqnarray}
\phi_2 (\nabla_v X) & =& d( \phi_2 \circ X)(v) + \phi_2(\Gamma(v,X(p))),   \nonumber\\
 0& =& \frac{d}{dt}|_{t=0} (\phi_2 \circ P_t X(p)) + \phi_2(\Gamma(v, X(p))),  \nonumber \\
  \label{eq:local_F1}
F_1(w)v &=& d_1 \tilde F(\phi(w)) d\phi_1(v) + d_2 \tilde F(\phi(w)) \frac{d}{dt}|_{t=0} (\phi_2 \circ P_t w),\\
\label{eq:local_F2}
F_2(w)z & = &  d_2 \tilde F(\phi(w)) \phi_2(z) = d_2 \tilde F(\phi(w)) d\phi_2(z)
\end{eqnarray}
where we used that $\phi_2$ is linear on $T_p M$. This shows that $F_1$ and $F_2$ are smooth.
Inserting $z = \nabla_v X$ and $w = X(p)$  into \eqref{eq:local_F1} and  \eqref{eq:local_F2} and adding the
resulting identities, we get  \eqref{eq:chainrule_N}, since $d(F \circ X)(v) = d(\tilde F \circ \phi(X))(v)$.

A similar calculation gives \eqref{eq:chainrule_TM}. In fact, the calculation can be simplified by
using normal coordinates at $p$. Then the Christoffel symbols vanish at $p$.
\end{proof}

\begin{proof}[Proof of Lemma~\ref{le:extrinsic_vs_vectorfields}]
Proof of   \eqref{le:extrinsic_pointwise}: Since $\imath$ is an isometric immersion and $\exp_p$ is Lipschitz on bounded sets,  we have
\begin{align*}
 |(\imath \circ f)(p) - (\imath \circ g)(p)| \le & \,  d(f(p), g(p))
 =  d(\exp_p X(p) , \exp_p Y(p)) \\
  \le & \,  C |X(p) - Y(p)|.
\end{align*}
For the lower bound we use that $\Exp\, X(p) =(p,\exp_p X(p)) = (p,f(p))$ and $\Exp\, Y(p) = (p, \exp_p Y(p)) = (p,g(p))$
and that $\Exp^{-1}$ is Lipschitz on compact subsets of $D_{\inj(M)}$.
Thus $|X(p) - Y(p)|  \le C d(f(p), g(p))$. The lower bound in  \eqref{le:extrinsic_pointwise}
now follows from  \eqref{eq:equivalence_extrinsic_distance}.

\medskip

Proof of   \eqref{le:extrinsic_pointwise_gradient}:   Since the statement is symmetric in $X$ and $Y$,
we may assume that $|\nabla Y(p)| \le L$. We apply the chain rule
\eqref{eq:chainrule_N}   to the  map $ F := \imath \circ \exp$.
This yields, for $v \in T_p M$,
\begin{equation}  \label{eq:nablaX_vs_df}
d(F \circ X)(v) = F_1(X(p)) v + F_2(X(p))(\nabla_v X)
\end{equation}
and similarly for $Y$. Thus
\begin{align}
& \,  d(\imath \circ f) v - d(\imath \circ g) v  =
F_2(X(p))(\nabla_v X - \nabla_v Y)    \nonumber  \\
 + & \,  [F_2(X(p)) - F_2(Y(p))] \nabla_v Y
+ \left(F_1(X(p)) - F_1(Y(p)) \right) v.   \label{eq:nablaXY_vs_df}
\end{align}
Now $F_1$ and $F_2$ are smooth. Thus $F_1$ is Lipschitz on compact sets and $F_2$ is bounded  and Lipschitz on
compact sets. Hence \eqref{le:extrinsic_pointwise_gradient} follows.

\medskip

Proof of \eqref{le:extrinsic_pointwise_gradient_lower}:
We first show that
\begin{equation} \label{eq:exp_estimateY}
|\nabla Y(p)| \le C_1  |d (\imath \circ g)(p) - d\imath(p)|   +  C_1 C |Y(p)|
\end{equation}
for some constants $C_1 > 0$ and $C > 0$.
The main observation is that the map $F_2(X(p)): T_p M \to T_{F(p)} \imath(M)$
in     \eqref{eq:nablaX_vs_df} is invertible if
$|X(p)| < \inj(M)$. Indeed, $F_2$ is the vertical derivative of $F$ and thus, for all $w,z \in T_pM$,
$$ F_2(w) z = d\imath(f(p)) d\exp_p(w) z$$
where $f(p) = \exp_p w$.
Now $d\imath(f(p))$ is an isometry and,  by the definition of the injectivity radius, the map
$d\exp_p(w): T_p M \to T_{f(p)} M$
is invertible for $|w| < \inj(M)$.  Since the map $TM \supset  U_{\inj(M)}
\ni w \to F_2(w)$ is smooth,  it follows by compactness that there exists $C_1 > 0$ such that
$$ |  (F_2(w))^{-1} |  \le C_1  \quad \text{if $|w| \le \delta = \frac12 \inj(M)$}.$$
Thus, exchanging the roles of $X$ and $Y$ in  \eqref{eq:nablaXY_vs_df} and setting $X=0$,
we get \eqref{eq:exp_estimateY}.

To prove \eqref{le:extrinsic_pointwise_gradient_lower}, we may assume that $|d(\imath \circ g)| \le L$
since the estimate is symmetric in $f$ and $g$.
It follows from  \eqref{eq:nablaXY_vs_df} that
\begin{align*}
& \, |\nabla X(p) - \nabla Y(p)|   \\
\le  & \,  C_1  |d (\imath \circ f)(p) - d(\imath\circ g)(p)|  + C_1 C |\nabla Y(p)| \, |X(p)-Y(p)|
+ C |X(p) - Y(p)|.
\end{align*}
Combining this estimate with  \eqref{eq:exp_estimateY} and the  estimate
$|X(p) - Y(p)| \le c |(\imath \circ f)(p) -  (\imath \circ g)(p)|$
and using that $|d\imath(p)| = \sqrt n$
and $|Y(p)| \le \delta$ we easily conclude.
\end{proof}

\subsection{The metric deficit equation and its linearization}
As mentioned above, a key observation is that
 the metric deficit  $(\exp X)^* g- g$  is local, i.e.,
 the value at $p \in M$ depends only on $X(p)$ and $\nabla X(p)$,
and for $X$ small in $C^1$ is given approximately by $g(   [\nabla X + (\nabla X)^T] \cdot, \cdot)$.

\begin{proposition}   \label{pr:metric_deficit}
\begin{enumerate}
\item There exists a smooth  map $H: TM \oplus (TM \otimes T^*M) \to T^*M \otimes T^*M$ such that
for every $C^1$  vector field  $X: M \to TM$ with $\|X\|_{C^0} \le \delta_0$,
\begin{equation}  \label{eq:metric_deficit_pointwise_X}
[(\exp X)^*g - g](p) = H(X(p), \nabla X(p)).
\end{equation}
\item
For  each  $\delta_0 > 0$ there exists a constant $C > 0$ with the following property.
For all $a,b,v \in T_p M$ and $A \in T_p M \otimes T^*_p M$ with $|v| \le \delta_0$
and $|A| \le \delta_0$,
\begin{equation}  \label{eq:metric_deficit_linearization}
|H(v, A)(a,b)  -  g( (A + A^T) a, b)|  \le C (|v| + |A|)^2 \, |a| \, |b|.
\end{equation}
\end{enumerate}
\end{proposition}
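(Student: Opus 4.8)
\noindent\emph{Proof strategy.} The plan is to read $H$ off from the chain rule, Proposition~\ref{pr:chainrule}, and then to extract the linearization from the Jacobi-field expansion of the exponential map near the zero section.

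For part (i), I would apply Proposition~\ref{pr:chainrule}(i) to the smooth map $\exp : TM \to M$, writing $(\exp)_1$, $(\exp)_2$ for its (smooth) horizontal and vertical derivatives. Given $(v,A)\in T_pM \oplus (T_pM\otimes T^*_pM)$, with $A$ viewed as an element of $\Lin(T_pM,T_pM)$, set $\Theta_{v,A}(a):=(\exp)_1(v)\,a + (\exp)_2(v)(Aa)\in T_{\exp_p v}M$ for $a\in T_pM$, and define
$$ H(v,A)(a,b) := g(\exp_p v)\bigl(\Theta_{v,A}\,a,\ \Theta_{v,A}\,b\bigr) - g(p)(a,b), \qquad a,b\in T_pM. $$
Smoothness of $H$ is then immediate from smoothness of $\exp$, $(\exp)_1$, $(\exp)_2$ and $g$ (and $\exp$ is defined on all of $TM$ since $M$ is complete, so no constraint on $v$ is needed at this stage). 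For a $C^1$ vector field $X$ one has $\nabla_a X = \nabla X(p)\,a$, so Proposition~\ref{pr:chainrule}(i) gives $d(\exp X)(p)\,a = \Theta_{X(p),\nabla X(p)}(a)$; pulling back $g$ along $\exp X$ then yields $[(\exp X)^*g - g](p) = H(X(p),\nabla X(p))$, i.e.\ \eqref{eq:metric_deficit_pointwise_X}.

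For part (ii) the key input is a quantitative expansion of $(\exp)_1(v)$ and $(\exp)_2(v)$ about $v=0$. Fixing $p$ and $v$, let $c(s):=\exp_p(sv)$, $s\in[0,1]$, a geodesic with $|c'|\equiv|v|$, and let $P^v_s\colon T_pM\to T_{c(s)}M$ denote parallel transport along $c$. Unravelling the definitions, $(\exp)_1(v)\,a = J(1)$ for the Jacobi field $J$ along $c$ with $J(0)=a$, $\tfrac{D}{ds}J(0)=0$, and $(\exp)_2(v)\,z = \tilde J(1)$ for the Jacobi field with $\tilde J(0)=0$, $\tfrac{D}{ds}\tilde J(0)=z$. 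Comparing $J$ with the curvature-free solution $s\mapsto P^v_s a$ and $\tilde J$ with $s\mapsto s\,P^v_s z$, and noting that in the Jacobi equation $\tfrac{D^2}{ds^2}J + R(J,c')c' = 0$ the inhomogeneity carries two factors of $c'$ and is therefore $O(|v|^2)$, a Gr\"onwall argument should give, uniformly for $p\in M$ and $|v|\le\delta_0$ (compactness bounds the curvature),
$$ (\exp)_1(v) = P^v_1 + R_1(v), \qquad (\exp)_2(v) = P^v_1 + R_2(v), \qquad |R_i(v)| \le C(\delta_0)\,|v|^2 . $$
I expect this Jacobi-field comparison to be the only genuine computation, and the crucial feature is that the first-order-in-$v$ term vanishes, so the error is honestly quadratic: an $O(|v|)$ bound would not suffice for \eqref{eq:metric_deficit_linearization}.

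It then remains to substitute. Writing $\Theta_{v,A}(a) = P^v_1(a+Aa) + \eta(a)$ with $\eta(a):=R_1(v)a + R_2(v)(Aa)$, so that $|\eta(a)|\le C(\delta_0)(1+|A|)|v|^2|a|$, and using that $P^v_1$ is a linear isometry from $(T_pM,g(p))$ onto $(T_{\exp_p v}M,g(\exp_p v))$, one obtains
$$ g(\exp_p v)\bigl(\Theta_{v,A}a,\Theta_{v,A}b\bigr) = g(p)(a+Aa,\,b+Ab) + \rho(a,b), $$
where $\rho$ gathers the terms linear and quadratic in $\eta$ and obeys $|\rho(a,b)|\le C(\delta_0)(1+|A|)^2|v|^2|a||b|$. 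Since $g(p)(a+Aa,b+Ab) - g(p)(a,b) = g((A+A^T)a,b) + g(Aa,Ab)$, with $A^T$ the transpose with respect to $g(p)$, we get $H(v,A)(a,b) - g((A+A^T)a,b) = g(Aa,Ab) + \rho(a,b)$, whose absolute value is at most $\bigl(|A|^2 + C(\delta_0)(1+|A|)^2|v|^2\bigr)|a||b|$; for $|v|,|A|\le\delta_0$ this is bounded by $C'(\delta_0)(|v|+|A|)^2|a||b|$, which is \eqref{eq:metric_deficit_linearization}.
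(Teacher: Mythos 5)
Your part (i) is exactly the paper's construction: apply Proposition~\ref{pr:chainrule} to $F=\exp$ and set $H(v,A)(a,b)=g\bigl(F_1(v)a+F_2(v)(Aa),\,F_1(v)b+F_2(v)(Ab)\bigr)-g(a,b)$. For part (ii), however, you take a genuinely different and, as far as I can see, correct route. The paper argues softly: it observes $H(0,0)=0$, computes the full derivative of $H$ at the origin via the identity $\frac{d}{dt}\big|_{t=0}H(tv,tA)(a,b)=g(Aa,b)+g(a,Ab)$, which it obtains by the mixed-derivative commutation $\frac{D}{dt}\frac{d}{ds}=\frac{D}{ds}\frac{d}{dt}$ applied to $f(s,t)=\exp_{\gamma(s)}(tX(\gamma(s)))$ (this is the computation flagged in \eqref{eq:commute_second_deficit}), and then invokes smoothness of $H$ plus compactness of $M$ to get the quadratic Taylor remainder; no curvature or Jacobi-field estimates appear. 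You instead make the expansion quantitative: you identify $(\exp)_1(v)a$ and $(\exp)_2(v)z$ with values at time $1$ of Jacobi fields along $s\mapsto\exp_p(sv)$ (with data $(a,0)$ and $(0,z)$ respectively -- both identifications are correct, the first via the geodesic variation $f(s,t)=\exp_{\gamma(t)}(sP_tv)$), compare with the parallel-transport solutions by Gr\"onwall using that the curvature term carries two factors of $c'$ and is hence $O(|v|^2)$ uniformly on the compact manifold, and then expand $g(p)(a+Aa,b+Ab)-g(p)(a,b)$ directly, absorbing $g(Aa,Ab)$ and the $O(|v|^2)$ remainders into the right-hand side of \eqref{eq:metric_deficit_linearization}. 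Your correct emphasis that the error in $v$ is genuinely quadratic (no linear term) is exactly what makes this work. What each approach buys: the paper's argument is shorter and needs only smoothness of $H$ together with one commutation of derivatives, staying entirely within the "soft" framework it advertises; yours requires the (standard but nontrivial) Jacobi-field identifications and comparison estimates, but in exchange gives an explicit form of the remainder (curvature term plus $g(Aa,Ab)$) and a self-contained quantitative bound that does not pass through an abstract Taylor expansion of $H$. To make your sketch a complete proof you would only need to write out the Gr\"onwall comparison in a parallel frame, which is routine.
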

\begin{proof}
 By definition of the pullback, we have for $a, b \in T_p M$
\begin{align*} (\exp X)^*g(a,b) = g(d(\exp \circ X) a, d(\exp \circ X)b).
\end{align*}
By the chain rule (see Proposition~\ref{pr:chainrule}) there exist smooth maps $F_1$ and $F_2$ such that
$$ d(\exp \circ X) a = F_1(X(p)) a + F_2(X(p)) (\nabla_a X)$$
where $F_2$ is the vertical derivative of $\exp$ and $F_1$ is the horizontal derivative.
Therefore  the first assertion holds with
$$H(v,A)(a,b) := g( F_1(v) a + F_2(v) (A a), F_1(v) b + F_2(v) (Ab)) - g(a,b).$$
Moreover, $H(0,0) = 0$, since  $(\exp 0)^*g = \id^*g = g$.

\medskip

Since $H$ is smooth, $M$ is compact,  and $H(0,0) =0$,  the second assertion follows from the identity
\begin{equation}  \label{eq:linearization_metric_defect}
 \frac{d}{dt}|_{t=0}  H(tv,tA)(a,b) = g(A  a, b) + g(a, Ab).
 \end{equation}
To prove  \eqref{eq:linearization_metric_defect}, fix $p \in M$ and let $X: M \to TM$ be a smooth vector field such that $X(p) = v$ and $\nabla X(p) = A$.
Let $a,b \in T_p M$.
Then
\begin{align}
 & \, \frac{d}{dt}|_{t=0}  \,  H(tv,tA)(a,b) =   \frac{d}{dt}|_{t=0} \,  (\exp \circ (tX))^*g(a,b)
 \nonumber \\
 = & \,
  \frac{d}{dt}|_{t=0}  \, g(d(\exp \circ (tX))a, d(\exp \circ (tX))b)   \nonumber \\
  = & \,  \,  g\left(\frac{D}{dt}|_{t=0} d\big(\exp \circ (tX)\big) a, b\right) +  g\left(a, \frac{D}{dt}|_{t=0}d\big( (\exp \circ (tX) \big) b\right).   \label{eq:commute_second_deficit}
\end{align}
Now consider a curve $\gamma: (-\delta, \delta) \to M$ with $\gamma(0) = p$ and $\gamma'(0) = a$
and set
$$f(s,t) := (\exp \circ (tX)) \circ \gamma(s) = \exp_{\gamma(s)} \big( t X(\gamma(s))\big).$$
Computing the mixed second derivatives at $(0,0)$
and using the identity
$  \frac{D}{dt} \frac{d}{ds} =  \frac{D}{ds} \frac{d}{dt}$ (see, for example, \cite[(**), p.\ 3]{Cheeger-Ebin}
and note that $[\frac{d}{ds}, \frac{d}{dt}] = 0$) we get
\begin{align*}
  \frac{D}{dt}|_{t=0}   \,  d(\exp \circ (tX)) a =  & \,  \frac{D}{dt}|_{t=0}  \frac{d}{ds}|_{s=0} f(s,t) \\
 = & \,  \frac{D}{ds}|_{s=0}  \frac{d}{dt}|_{t=0} f(s,t)   \\
 = &  \frac{D}{ds}|_{s=0}  X(\gamma(s)) = \nabla_{a} X = A a.
\end{align*}
Repeating the calculation with $a$ replaced by $b$ we get    \eqref{eq:linearization_metric_defect}.
This concludes the proof.
\end{proof}

\subsection{Rigidity estimate if there are no nontrivial  Killing fields}

We begin with the standard linear $L^p$ estimate, which in the Euclidean setting corresponds to Korn's inequality. We only need the estimate for compact manifolds, for an estimate on domains with boundary we refer to \cite{ChenJost2002riemannian}.
A vector field $X:M\to TM$ is in the Sobolev space  $W^{1,p}$
if $X$ and the weak covariant derivative $\nabla X$ (which can be defined for example in coordinates) are in $L^p$.
\begin{lemma} \label{le:elliptic_estimate}
Let $p \in (1,\infty)$ and  define an operator $\mathcal L$ which maps  vector fields  to sections  in $TM \otimes T^*M$ by
\begin{equation}
\mathcal L X := \nabla X + (\nabla X)^T.
\end{equation}
Then there exists a constant $C>0$ which may depend on $p$ such that for every $W^{1,p}$ vector field $X$
\begin{equation}  \label{eq:W1p_estimate_Killing}
\| \nabla X\|_{L^p} \le C (\| \mathcal L X\|_{L^p} + \| X\|_{L^p}).
\end{equation}
Moreover, every vector field $X$ in $W^{1,1}$ which satisfies  $\mathcal L X = 0$  almost everywhere is smooth.
\end{lemma}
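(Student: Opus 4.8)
The plan is to recognise $\mathcal L$ as an \emph{overdetermined elliptic} first-order operator --- one whose principal symbol is fibrewise injective --- and then to read off both assertions from the standard $L^p$-calculus of pseudodifferential operators on the compact manifold $M$, together with the ellipticity of the second-order operator $P:=\mathcal L^*\mathcal L$.

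The first step is to compute the principal symbol. Raising and lowering indices with $g$, the symbol of $\nabla$ at a covector $\xi$ sends $v\in T_pM$ to the $(1,1)$-tensor $w\mapsto\xi(w)\,v$, so
\begin{equation}
\sigma_\xi(\mathcal L)(v)=v\otimes\xi+\xi^\sharp\otimes v^\flat\in T_pM\otimes T^*_pM,\qquad\xi\in T^*_pM\setminus\{0\},\ v\in T_pM,
\end{equation}
where $\xi^\sharp\in T_pM$ and $v^\flat\in T^*_pM$. Evaluating this endomorphism at $w=\xi^\sharp$ gives $|\xi|^2v+\langle v,\xi^\sharp\rangle\,\xi^\sharp$, which vanishes only if $v$ is a multiple of $\xi^\sharp$; and for $v=c\,\xi^\sharp$ the right-hand side above equals $2c\,\xi^\sharp\otimes\xi$, forcing $c=0$. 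Hence $\sigma_\xi(\mathcal L)$ is injective for every $\xi\ne0$, and consequently $\sigma_\xi(P)=\sigma_\xi(\mathcal L)^*\sigma_\xi(\mathcal L)$ is positive definite, so $P$ is a self-adjoint, strongly elliptic second-order operator on sections of $TM$.

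For the estimate \eqref{eq:W1p_estimate_Killing} I would use that, because $\sigma_\xi(\mathcal L)$ is injective, the matrix-valued symbol $q(p,\xi):=\big(\sigma_\xi(\mathcal L)^*\sigma_\xi(\mathcal L)\big)^{-1}\sigma_\xi(\mathcal L)^*$, cut off smoothly near $\xi=0$, is a classical symbol of order $-1$; the associated pseudodifferential operator $Q$ is then a left parametrix, $Q\mathcal L=\mathrm{Id}+R$ with $R$ of order $-1$. Since operators of order $-1$ map $L^p$ boundedly into $W^{1,p}$ (and order-$0$ operators are bounded on $L^p$ for $1<p<\infty$, by Calderón--Zygmund theory), this yields
\begin{equation}
\|X\|_{W^{1,p}}\le\|Q\mathcal L X\|_{W^{1,p}}+\|RX\|_{W^{1,p}}\le C\|\mathcal L X\|_{L^p}+C\|X\|_{L^p},
\end{equation}
which gives \eqref{eq:W1p_estimate_Killing} (the left-hand side there being bounded by $\|X\|_{W^{1,p}}$). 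For the regularity statement, note that $P=\mathcal L^*\mathcal L$ is elliptic, hence hypoelliptic; and if $X\in W^{1,1}$ satisfies $\mathcal L X=0$ almost everywhere, then integrating by parts once --- legitimate since $X\in W^{1,1}$ --- gives $\langle X,P\psi\rangle_{L^2}=\langle\mathcal L X,\mathcal L\psi\rangle_{L^2}=0$ for every smooth vector field $\psi$, i.e.\ $PX=0$ in the sense of distributions, so $X$ is smooth.

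The one point that is not merely bookkeeping is obtaining the \emph{first-order} bound \eqref{eq:W1p_estimate_Killing}: the cheap route through $P=\mathcal L^*\mathcal L$ only gives $W^{2,p}$-control by $\|PY\|_{L^p}$, and one must descend one derivative --- either via the order-$(-1)$ parametrix above, or by working with $P$ between $W^{1,p}$ and $W^{-1,p}$ after splitting off its finite-dimensional (smooth) kernel, which by $\langle PY,Y\rangle_{L^2}=\|\mathcal L Y\|_{L^2}^2$ coincides with $\ker\mathcal L$. An entirely elementary alternative is to localise: cover $M$ by finitely many charts with bounded smooth image, write $\mathcal L X$ in coordinates as the Euclidean symmetric gradient of the components of $X$ plus a zeroth-order Christoffel-symbol term, apply the classical Euclidean Korn inequality on each chart, and recombine with a partition of unity, absorbing the Christoffel term into $C\|X\|_{L^p}$.
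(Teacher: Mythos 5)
Your proposal is correct and takes essentially the same route as the paper: the paper's proof simply records that $\mathcal L$ is (overdetermined) elliptic, citing Chen--Jost, and invokes standard elliptic estimates together with the compactness of $M$. Your symbol computation, parametrix argument, and the hypoellipticity of $\mathcal L^*\mathcal L$ (as well as the chart-by-chart Korn alternative) just supply the details that the paper delegates to the references.
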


The solutions of $\mathcal L X = 0$ are called Killing fields.

\begin{proof}
The operator $\mathcal L$ is elliptic, see for example \cite[Sect.~5]{ChenJost2002riemannian}. Now both assertions follow from standard
 elliptic estimates and the compactness of $M$.
\end{proof}

\begin{proposition}   \label{pr:estimate_without_killing}
Assume that the equation  $\mathcal L X  = 0$ has
no non-zero solution. Let $p \in (1,\infty)$. Then there exists a constant $C > 0$
such that
\begin{equation}  \label{eq:W1p_estimate_Killing_improved}
 \|X\|_{W^{1,p}} \le C \|\mathcal LX\|_{L^p}.
 \end{equation}
for every $W^{1,p}$ vector field.
 Moreover,  there exist
$\delta > 0$ and $C > 0$ such that
for every vector field $X$ with $\| X\|_{C^1} \le \delta$ one has
\begin{equation}  \label{eq:rigidity_without_killing}
\| X \|_{W^{1,p}} \le C \| (\exp X)^* g - g \|_{L^p}.
\end{equation}
\end{proposition}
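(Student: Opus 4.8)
The plan is to prove \eqref{eq:W1p_estimate_Killing_improved} first, by a compactness argument, and then to bootstrap from the linearized estimate in Proposition~\ref{pr:metric_deficit} to the nonlinear estimate \eqref{eq:rigidity_without_killing}.

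For \eqref{eq:W1p_estimate_Killing_improved} I would argue by contradiction: if the estimate fails, there is a sequence of $W^{1,p}$ vector fields $X_k$ with $\|X_k\|_{W^{1,p}}=1$ and $\|\mathcal L X_k\|_{L^p}\to 0$. Since $M$ is compact, the embedding $W^{1,p}(M)\hookrightarrow L^p(M)$ is compact, so after passing to a subsequence $X_k\weakto X$ in $W^{1,p}$ and $X_k\to X$ in $L^p$; in particular $\nabla X_k\weakto\nabla X$ in $L^p$, hence $\mathcal L X_k\weakto\mathcal L X$ in $L^p$, while $\mathcal L X_k\to 0$ strongly, so $\mathcal L X=0$. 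By Lemma~\ref{le:elliptic_estimate} the field $X$ is then a (smooth) Killing field, hence $X=0$ by hypothesis. But then the Korn-type inequality \eqref{eq:W1p_estimate_Killing} gives $\|\nabla X_k\|_{L^p}\le C(\|\mathcal L X_k\|_{L^p}+\|X_k\|_{L^p})\to 0$ together with $\|X_k\|_{L^p}\to 0$, so $\|X_k\|_{W^{1,p}}\to 0$, contradicting $\|X_k\|_{W^{1,p}}=1$. This yields \eqref{eq:W1p_estimate_Killing_improved} with some constant $C_0$.

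For \eqref{eq:rigidity_without_killing} I would use Proposition~\ref{pr:metric_deficit}. Fix $\delta_0=1$ there, let $C_1$ be the constant in \eqref{eq:metric_deficit_linearization}, and let $X$ satisfy $\|X\|_{C^1}\le\delta\le 1$. By part (i), $[(\exp X)^*g-g](p)=H(X(p),\nabla X(p))$, and by \eqref{eq:metric_deficit_linearization} this $(0,2)$-tensor differs, in pointwise norm, from the tensor $g\big((\mathcal L X)(p)\,\cdot\,,\cdot\big)$ by at most $C_1(|X(p)|+|\nabla X(p)|)^2$; since lowering an index with $g$ is a pointwise isometry of tensor norms this gives
\begin{equation*}
|\mathcal L X|(p)\le |(\exp X)^*g-g|(p)+C_1\big(|X(p)|+|\nabla X(p)|\big)^2 .
\end{equation*}
Because $\|X\|_{C^1}\le\delta$ we have $|X(p)|+|\nabla X(p)|\le C_2\delta$ pointwise, so $(|X(p)|+|\nabla X(p)|)^2\le C_2\delta\,(|X(p)|+|\nabla X(p)|)$; taking $L^p$ norms and using $\|\,|X|+|\nabla X|\,\|_{L^p}\le C_3\|X\|_{W^{1,p}}$ gives $\|\mathcal L X\|_{L^p}\le \|(\exp X)^*g-g\|_{L^p}+C_1C_2C_3\,\delta\,\|X\|_{W^{1,p}}$. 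Combining with the first part,
\begin{equation*}
\|X\|_{W^{1,p}}\le C_0\|(\exp X)^*g-g\|_{L^p}+C_0C_1C_2C_3\,\delta\,\|X\|_{W^{1,p}},
\end{equation*}
and choosing $\delta$ so small that $C_0C_1C_2C_3\,\delta\le \tfrac12$ (and $\delta$ below the thresholds required in Proposition~\ref{pr:metric_deficit}) lets me absorb the last term, giving \eqref{eq:rigidity_without_killing} with $C=2C_0$.

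I do not expect a serious obstacle here: the first step is the standard Rellich-plus-Korn compactness argument and the second is bookkeeping. The only points that need care are the order of quantifiers — all of $C_0,C_1,C_2,C_3$ must be fixed before $\delta$ is chosen — and the harmless identification of the $(1,1)$-tensor $\mathcal L X$ with the $(0,2)$-tensor $g\big((\mathcal L X)\cdot,\cdot\big)$ appearing in the metric deficit, which is legitimate because $g$ is smooth and fixed so the two have equal pointwise magnitude for the induced norms.
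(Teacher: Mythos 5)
Your argument is correct and coincides with the paper's own proof: the first estimate is obtained by the same contradiction/compactness argument based on the Korn-type inequality \eqref{eq:W1p_estimate_Killing} and the compact embedding $W^{1,p}\hookrightarrow L^p$, and the nonlinear estimate follows by combining it with the pointwise linearization \eqref{eq:metric_deficit_linearization} and absorbing the quadratic term for $\delta$ small. The extra remarks (smoothness of Killing fields via Lemma~\ref{le:elliptic_estimate}, the index-lowering identification, the order in which the constants are fixed) are harmless and consistent with the paper.
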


\begin{proof} To show  \eqref{eq:W1p_estimate_Killing_improved},
we use  \eqref{eq:W1p_estimate_Killing} and the usual argument by contradiction
based on the compact Sobolev embedding $W^{1,p} \hookrightarrow L^p$.
If  \eqref{eq:W1p_estimate_Killing_improved} does not hold, then  there exist vector fields $X_k$ such that
$\| X_k \|_{W^{1,p}} = 1$ and $\|\mathcal LX_k\|_{L^p} \to 0$.
A subsequence  of $X_k$ (not relabelled) converges weakly in $W^{1,p}$ and hence strongly in $L^p$ to a vector field
$X \in W^{1,p}$. Thus $\mathcal L X = 0$.
By assumption, this implies that $X = 0$. Hence $\| X_k\|_{L^p} \to 0$. The estimate  \eqref{eq:W1p_estimate_Killing} implies that $\| \nabla X_k \|_{L^p} \to 0$.
This contradicts the assumption $\| X_k\|_{W^{1,p}} = 1$.

\medskip

To prove  \eqref{eq:rigidity_without_killing}, we use   \eqref{eq:W1p_estimate_Killing_improved}
and the pointwise estimate
  \eqref{eq:metric_deficit_linearization}.
  This yields
  \begin{align*}
  \| X \|_{W^{1,p}} \le C \| \mathcal L  X\|_{L^p}
  \le  C_1  \| (\exp X)^* g - g \|_{L^p} +   C_2   \|X\|_{C^1} \| X\|_{W^{1,p}}.
  \end{align*}
Thus we obtain  \eqref{eq:W1p_estimate_Killing_improved} if $\delta < \frac{1}{2 C_2}$.
\end{proof}

\subsection{Isometries and Killing fields}

Let $\mathcal K$ denote the space of Killing fields, i.e., the solutions of
$\nabla X + (\nabla X)^T = 0$. It follows from  \eqref{eq:W1p_estimate_Killing}
and the compact embedding $W^{1,p} \hookrightarrow L^p$ that the space $\mathcal K$
is finite dimensional. In fact,  a geometric argument gives the optimal bound
$\dim \mathcal K \le \frac{n(n+1)}{2}$, see \cite{ONeill83}, Chapter 9, Lemma 28.

If $\mathcal K \ne \{0\}$,  then in  the estimate  \eqref{eq:W1p_estimate_Killing_improved}
the left hand side  has to be replaced by $\min_{K \in \mathcal K} \| X - K\|_{W^{1,p}}$.
With this change,  the new estimate can still  be proved by  the same argument by contradiction.
The fact that we can control $X$ only up to  a shift by elements of $\mathcal K$ is not surprising.
Indeed, it  is just the infinitesimal consequence of the invariance of the metric deficit under left composition of
$f =\exp X$ by isometries $\phi$:
$(\phi \circ f)^*g = f^*(\phi^* g) = f^* g$.

The key fact that comes to our rescue  is that  the group $\Isom(M)$ of isometries of
$M$ is a Lie group and all isometries close to the identity can be generated by the flow of a Killing field.
Recall that by $\Isom_+(M)$ we denote the subgroup of orientation preserving isometries
and note that the connected component of $\Isom(M)$ which contains the identity is contained in
$\Isom_+(M)$.

\begin{theorem} \label{th:isom_lie}
\begin{enumerate}
\item \label{th:isom_lie_finite} The group $\Isom(M)$ is a finite dimensional Lie group and the action
$\Isom(M) \times M \to M$ given by  $(\phi, p) \mapsto \phi(p)$ is smooth.
\item \label{th:isom_lie_Killing} The Lie algebra $\mathcal J(M)$ of $\Isom(M)$ can be identified with the space $\mathcal K$ of Killing fields.
\item   \label{it:diffeo_K_Isom}  Let $\mathrm{LieExp}$ be the map which maps $K \in \mathcal K$ to
$\phi_K$, the flow of $K$ at time $1$. Then $\mathrm{LieExp}$ is a diffeomorphism between an open
neighbourhood  $\mathcal U$ of $0$ in $\mathcal K$ and an open neighbourhood of the identity in
$\Isom_+(M) \subset \Isom(M)$.
\end{enumerate}
\end{theorem}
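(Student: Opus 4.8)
The plan is to deduce all three parts from the Myers--Steenrod theorem together with standard Lie group theory, so that (i) is the only substantive input and (ii)--(iii) become soft consequences. \emph{Part \ref{th:isom_lie_finite}} is precisely the content of the Myers--Steenrod theorem \cite{Myers-Steenrod39} (see also \cite[Chapter~VI]{Kobayashi-Nomizu63}): equipped with the compact--open topology, $\Isom(M)$ carries a unique smooth structure making it a finite dimensional Lie group, and the evaluation action $\Isom(M) \times M \to M$ is smooth. I would simply invoke this. (When $M$ is compact one can in addition show that $\Isom(M)$ itself is compact, by Arzel\`a--Ascoli applied to the $1$-Lipschitz maps together with the fact that an isometry is determined by its value and differential at a single point; this is not needed below.)

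\emph{Part \ref{th:isom_lie_Killing}.} Using (i), write $\exp_{\Isom}: \mathcal{J}(M) \to \Isom(M)$ for the Lie group exponential, where $\mathcal{J}(M)$ is identified with $T_{\id}\Isom(M)$, and define a linear map $\iota: \mathcal{J}(M) \to \Gamma(TM)$ by $\iota(\xi)(p) := \frac{d}{dt}|_{t=0}\big(\exp_{\Isom}(t\xi)\big)(p)$. I would then check three things. \emph{(a)} $\iota(\xi) \in \mathcal{K}$: the flow of $\iota(\xi)$ is $t \mapsto \exp_{\Isom}(t\xi)$, a one--parameter group of isometries, so differentiating $(\exp_{\Isom}(t\xi))^*g = g$ at $t=0$ shows that $\iota(\xi)$ solves the Killing equation $\nabla \iota(\xi) + (\nabla \iota(\xi))^T = 0$. \emph{(b)} $\iota$ is injective: if $\iota(\xi) = 0$ then the one--parameter subgroup $t \mapsto \exp_{\Isom}(t\xi)$ has vanishing generator, hence is trivial, hence $\xi = 0$. \emph{(c)} $\iota$ maps onto $\mathcal{K}$: any $K \in \mathcal{K}$ is complete ($M$ being compact), its flow $\psi^K_t$ satisfies $(\psi^K_t)^*g = g$, so $\psi^K_t \in \Isom(M)$, and $t \mapsto \psi^K_t$ is a continuous --- hence, in a Lie group, smooth --- one--parameter subgroup, so $\psi^K_t = \exp_{\Isom}(t\xi)$ for a unique $\xi \in \mathcal{J}(M)$, whence $\iota(\xi) = K$. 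Thus $\iota$ is a linear isomorphism from $\mathcal{J}(M)$ onto $\mathcal{K}$; that it moreover respects the Lie brackets (up to the customary sign coming from the choice of left versus right action) is classical, see \cite[Chapter~VI]{Kobayashi-Nomizu63}.

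\emph{Part \ref{it:diffeo_K_Isom}.} By construction $\mathrm{LieExp}(K) = \psi^K_1 = \exp_{\Isom}(\iota^{-1}(K))$, i.e.\ $\mathrm{LieExp} = \exp_{\Isom} \circ \iota^{-1}$. Since $d(\exp_{\Isom})_0 = \id$, the inverse function theorem gives that $\exp_{\Isom}$ is a diffeomorphism from a neighbourhood of $0$ in $\mathcal{J}(M)$ onto a neighbourhood of $\id$ in $\Isom(M)$; composing with the linear isomorphism $\iota^{-1}$, we obtain that $\mathrm{LieExp}$ restricts to a diffeomorphism from a suitable neighbourhood $\mathcal{U}$ of $0$ in $\mathcal{K}$ onto an open neighbourhood of $\id$ in $\Isom(M)$. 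Finally, every $\mathrm{LieExp}(K) = \psi^K_1$ is joined to $\id$ by the path $t \mapsto \psi^K_t$, so it lies in the identity component of $\Isom(M)$; and the identity component is contained in $\Isom_+(M)$, because the map $\Isom(M) \to \{\pm 1\}$ recording whether an isometry is orientation preserving is locally constant and equal to $1$ at $\id$. As $\Isom_+(M)$ is open in $\Isom(M)$ (being a union of connected components), the set $\mathrm{LieExp}(\mathcal{U})$ is an open neighbourhood of $\id$ in $\Isom_+(M)$, as claimed.

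The only genuinely hard ingredient is Part \ref{th:isom_lie_finite}, which I would take from the literature; granting it, everything else is routine, the only points requiring a little care being completeness of Killing fields on the compact manifold $M$, the automatic smoothness of continuous one--parameter subgroups of a Lie group, and the identification of the Lie group exponential with ``flow at time $1$'' that underlies Part \ref{it:diffeo_K_Isom}.
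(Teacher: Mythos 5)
Your proposal is correct and follows essentially the same route as the paper: Myers--Steenrod for part (i), the identification of one-parameter subgroups of $\Isom(M)$ with flows of (complete, since $M$ is compact) Killing fields for part (ii), and the local diffeomorphism property of the Lie exponential together with path-connectedness of its image to land in $\Isom_+(M)$ for part (iii). The only difference is that you verify by hand (injectivity, surjectivity via automatic smoothness of continuous one-parameter subgroups) what the paper delegates to citations of O'Neill and Michor, which is a fine substitution.
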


\begin{proof}
\ref{th:isom_lie_finite}: This  is the Myers-Steenrod theorem, see \cite{Myers-Steenrod39}
or \cite{ONeill83}, Chapter 9, Theorem 32.

\medskip

\ref{th:isom_lie_Killing}: By definition, $\mathcal J(M)$ consists of the left invariant vector fields of $\Isom(M)$.
Each such vector field $Y$ generates a one parameter subgroup $\psi_t$. We define
a map from $\mathcal J(M)$ to vector fields on $M$ by setting $X^+(p) := \frac{d}{dt}|_{t=0} \psi_t(p)$.
By  \cite{ONeill83}, Chapter 9, Proposition 33, the map $Y \mapsto X^+$ is a Lie anti-isomorphism
from $\mathcal J(M)$ onto $\mathcal K$.  To apply that Proposition we use    that for a compact manifold $M$ every Killing vector field is
complete, i.e., its flow is defined for all times.  Note  that  the one parameter subgroups of isometries
generated by $Y$ (as a vector field on $\Isom(M)$) and by $X^+$  (as a vector field on $M$)
are the same.

\medskip

\ref{it:diffeo_K_Isom}: This follows from \ref{th:isom_lie_Killing}. Indeed, the Lie exponential map which maps $\mathcal J(M)$ to the flow at time one
provides such a diffeomorphism, see \cite{michor08}, Theorem 4.18. Moreover, the image of the Lie exponential
map is path connected to the identity and therefore lies in $\Isom_+(M)$.
\end{proof}

\begin{remark} Note that all norms on the finite dimensional spaces  $\mathcal K$  and $\mathcal J(M)$ are equivalent.
In the following, we will use a specific norm where convenient and otherwise just write
$|K|$,  keeping  in mind that the specific constants in the subsequent estimates depend of course on the choice of norm.
\end{remark}

\medskip

We want to show  that if the metric deficit $f^*g -g$ is small, then $f$ is close to an isometry.
If $f$ is expressed as $f = \exp X$, then  Proposition~\ref{pr:metric_deficit}
suggests that $X$ is close to a Killing field. We expect that by composing $f = \exp X$ with an isometry $\phi_K$  generated
by a Killing field $K$ we can obtain a new map that is close to the identity, i.e., the corresponding vector field $Y$ is close to zero and not just close to a Killing field.

\medskip

To proceed, we first  look at the induced action of composition by  $\phi_K$ on vector fields
and then carry out the minimization.
For a Killing field $K$ we denote by $\phi_{K,t}$ its flow and we write $\phi_K := \phi_{K,1}$.

 \begin{lemma}  \label{le:composition_killing} Let $\delta: = \inj(M)$.
 Let
 $$ U_{\eta}:= \{ v \in TM : |v| < \eta\}, \quad
 B_\eta  := \{ K \in \mathcal K :  |K | < \eta\},  \quad \text{where $|K| = \|K\|_{C^0}$.} $$
 Then for $K \in B_{\delta/2}$ the map
 $$ \Psi_K : U_{\delta/2} \to TM$$
 defined by
 $$  \Psi_K := \Exp^{-1}(\pi, \phi_K \circ \exp)$$
 is a bundle map (i.e., maps $T_p M$ to $T_p M$) and the map
 $(K, v) \mapsto \Psi_K(v)$ is a  smooth map from $B_{\delta/2} \times U_{\delta/2}$ to $TM$.

 Let $X: M \to TM$ be a  $C^1$ vector field with $\|X\|_{C^0} \le \delta/4$,  let $K \in \mathcal K$ with $\|K\|_{C^0}
 \le \delta/4$  and let
 $Y = \Psi_K \circ X$. Then
 \begin{equation}  \label{eq:formula_Y_K_X} \exp Y = \phi_K \circ \exp X.
 \end{equation}
 Moreover, $Y(p)$ depends smoothly on $X(p)$ and $K$, while
   $\nabla Y(p)$ depends smoothly on $K$, $X(p)$, and $\nabla X(p)$,
 and  there exists a constant $C>0$   such that  the following estimates hold for all such vector fields $X$:
\begin{eqnarray}   \label{eq:phi_K_circ_X_zero_order} | Y - (K + X)|(p) &\le& C |K| ( |X|(p) +
 |K|), \\
    \label{eq:phi_K_circ_X} | \nabla Y - (\nabla K + \nabla X)|(p) &\le& C |K| ( |X|(p) + |\nabla X|(p) +
 |K|).
 \end{eqnarray}
  \end{lemma}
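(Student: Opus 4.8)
The plan is to establish, in this order: that $\Psi_K$ is a well-defined bundle map and that $(K,v)\mapsto\Psi_K(v)$ is smooth; then the identity $\exp Y=\phi_K\circ\exp X$ together with the smooth dependence of $Y(p)$ and $\nabla Y(p)$; and finally the two pointwise estimates, which I would get by Taylor expansion in $K$ around $K=0$, exploiting that $\Psi_0$ is the identity. \emph{Well-definedness and the bundle property.} Given $v\in T_pM$ with $|v|<\delta/2$ and $K\in B_{\delta/2}$, the geodesic $t\mapsto\exp_p(tv)$ has length $|v|<\delta/2$ while the flow curve $t\mapsto\phi_{K,t}(\exp_pv)$, $t\in[0,1]$, has length at most $\|K\|_{C^0}<\delta/2$; hence $d(p,\phi_K(\exp_pv))<\delta=\inj(M)$, i.e.\ $(p,\phi_K(\exp_pv))\in D_\delta$, so by Proposition~\ref{pr:exists_Exp_inverse} the point $\Psi_K(v)=\Exp^{-1}(p,\phi_K(\exp_pv))$ is well defined, and since $\Exp=(\pi,\exp)$ it lies in $T_pM$, so $\Psi_K$ is a bundle map. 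Smoothness of $(K,v)\mapsto\Psi_K(v)$ then follows by composing smooth maps: $\exp$ and $\Exp^{-1}|_{D_\delta}$ are smooth, and $(K,q)\mapsto\phi_K(q)$ is smooth on $\mathcal K\times M$ (it is the Lie exponential of $\Isom(M)$ followed by the smooth action of Theorem~\ref{th:isom_lie}, or directly since the flow of a finite-dimensional smooth family of complete vector fields depends smoothly on parameters and initial point); by the first step the argument of $\Exp^{-1}$ stays in $D_\delta$.

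\emph{The identity and smooth dependence.} Since $\|X\|_{C^0},\|K\|_{C^0}\le\delta/4$, $X$ takes values in $U_{\delta/2}$ and $K\in B_{\delta/2}$, so $Y:=\Psi_K\circ X$ is a well-defined $C^1$ vector field; applying $\Exp$ to $Y(p)=\Exp^{-1}(p,\phi_K(\exp_pX(p)))$ gives $Y(p)\in T_pM$ and $\exp_pY(p)=\phi_K(\exp_pX(p))$, i.e.\ $\exp Y=\phi_K\circ\exp X$. Smooth dependence of $Y(p)$ on $(X(p),K)$ is immediate; applying the chain rule (Proposition~\ref{pr:chainrule}, which is local and applies since $X$ has values in the domain of the bundle map $\Psi_K$) gives $\nabla_vY=(\Psi_K)_1(X(p))\,v+(\Psi_K)_2(X(p))(\nabla_vX)$ with horizontal and vertical derivatives $(\Psi_K)_1,(\Psi_K)_2$ smooth in $(K,X(p))$, so $\nabla Y(p)$ depends smoothly on $K,X(p),\nabla X(p)$.

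\emph{The estimates.} These rest on two facts. First, $\phi_0=\id_M$, so $\Psi_0=\Exp^{-1}\circ\Exp=\id$ on $U_\delta$, whence $(\Psi_0)_1\equiv0$ and $(\Psi_0)_2\equiv\id$. Second, $\Psi_K(0_p)=\exp_p^{-1}(\phi_K(p))=:\kappa_K(p)$, and since $\kappa_0=0$ and $\frac{d}{ds}|_{s=0}\kappa_{sK}=\frac{d}{ds}|_{s=0}\phi_{K,s}=K$ (using $d(\exp_p)_0=\id$ and $\phi_{sK,1}=\phi_{K,s}$), Taylor's theorem for the smooth map $K\mapsto\kappa_K$ into $C^1$ vector fields gives $\|\kappa_K-K\|_{C^1}\le C|K|^2$. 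Working in finitely many charts with smooth orthonormal frames, so that everything is jointly smooth and all norms are uniformly comparable to Euclidean norms over the compact sets $\overline{B_{\delta/4}}$ and $\{v\in TM:|v|\le\delta/4\}$, I would prove the $C^0$ bound via $g_p(K,v):=\Psi_K(v)-v-K(p)$: by the first fact $g_p(0,\cdot)\equiv0$, so $g_p$ and $\partial_vg_p$ vanish at $K=0$, and by the second fact $g_p(K,0)=\kappa_K(p)-K(p)=O(|K|^2)$; then $g_p(K,v)=g_p(K,0)+\int_0^1\partial_vg_p(K,tv)\,v\,dt$ together with $\partial_vg_p(K,\cdot)=\partial_vg_p(K,\cdot)-\partial_vg_p(0,\cdot)=O(|K|)$ gives $|g_p(K,v)|\le C|K|(|v|+|K|)$, which with $v=X(p)$ is \eqref{eq:phi_K_circ_X_zero_order}. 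For \eqref{eq:phi_K_circ_X}, the chain-rule formula gives $\nabla_vY-\nabla_vK-\nabla_vX=[(\Psi_K)_1(X(p))v-\nabla_vK]+[(\Psi_K)_2(X(p))-\id](\nabla_vX)$; the second bracket is $O(|K|)$ (since $(\Psi_0)_2\equiv\id$), contributing $\le C|K|\,|\nabla X(p)|\,|v|$, and the first bracket equals $\nabla_v(\kappa_K-K)=O(|K|^2)\,|v|$ when $X(p)=0$ and differs from that value by $[(\Psi_K)_1(X(p))-(\Psi_K)_1(0_p)]\,v$, which vanishes at $X(p)=0$ and (by $(\Psi_0)_1\equiv0$) at $K=0$, hence is $O(|K|\,|X(p)|)\,|v|$; adding gives $|\nabla Y(p)-\nabla K(p)-\nabla X(p)|\le C|K|(|X(p)|+|\nabla X(p)|+|K|)$.

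The main obstacle I expect is the precise form of the $C^1$ estimate: a crude bound only yields $C|K|(1+|X|+|\nabla X|)$, which is too weak when $X$ and $K$ are both small. Extracting the extra factor requires using \emph{both} vanishings above simultaneously — that $\Psi_0$ is exactly the identity (so its derivatives are $0$ and $\id$) and that $\kappa_K$ agrees with $K$ to second order — and combining them correctly; the remaining ingredients (chain rule, Taylor expansion, uniformity from compactness of $M$ and $\overline{B_{\delta/4}}$) are routine.
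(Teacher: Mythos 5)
Your proposal is correct and follows essentially the same route as the paper: well-definedness of $\Psi_K$ via the distance bounds $d(\phi_K(q),q)\le\|K\|_{C^0}$ and $d(\exp_p v,p)=|v|$, smoothness by composition, the chain rule splitting $\nabla_v Y=(\Psi_K)_1(X(p))v+(\Psi_K)_2(X(p))\nabla_v X$, and Taylor expansion around $K=0$ using $\Psi_0=\id$ together with compactness for uniform constants. The only point where you diverge is the identification of the first-order term $\nabla_v K$ in the horizontal part: the paper proves $\frac{d}{ds}|_{s=0}\Psi_1(sK,0)v=\nabla_v K$ directly via the commutation $\frac{D}{dt}\frac{d}{ds}=\frac{d}{ds}\frac{D}{dt}$ applied to $\exp_{\gamma(t)}^{-1}\phi_{sK}(\gamma(t))$, whereas you package the same content into the claim $\|\kappa_K-K\|_{C^1}\le C|K|^2$ via Taylor for $K\mapsto\kappa_K$ in the Banach space of $C^1$ vector fields; this is legitimate, but note that the assertion that the derivative at $K=0$ equals $K$ \emph{in the $C^1$ topology} (i.e.\ including the covariant derivative, not just pointwise values) is exactly where the paper's interchange-of-derivatives computation lives, so that step should be justified (joint smoothness of $(K,p)\mapsto\kappa_K(p)$ and symmetry of mixed partials in charts) rather than taken for granted.
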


  \begin{proof}
  Let $v \in T_p M \cap U_{\delta/2}$ and $K \in B_{\delta/2}$. By the definition of $\phi_K$ we have,
  for every $q \in M$,
  $$ d(\phi_K(q), q) \le \|K\|_{C^0} < \delta/2.$$
  Moreover $d(\exp v, p) = d(\exp_p v,p) = |v| < \delta/2$ since $\delta/2 < \inj(M)$.  Thus
  $d(\phi_K (\exp v), p) < \delta$. Since $\pi(v)= p$, it follows from Proposition~\ref{pr:exists_Exp_inverse}
  that $\Exp^{-1}(\pi(v), (\phi_K \circ \exp)(v))$ is well defined. Moreover
  \begin{equation}  \label{eq:expression_PsiK}
   \Psi_K(v) = \exp_p^{-1} \phi_K(\exp v)  \in T_p M.
   \end{equation}
  Thus $\Psi_K$ is a bundle map. Smoothness of $(K,v) \mapsto \Psi_K(v)$ follows from the smoothness of $\Exp^{-1}$, smoothness of the
  map $(K,q) \mapsto \phi_K(q)$,
 and the chain rule.

 Formula  \eqref{eq:expression_PsiK} implies that
 $\exp_p \Psi_K(v) = \phi_K ( \exp_p v)$. Thus \eqref{eq:formula_Y_K_X} holds.
 Moreover $Y(p)$ depends only on $X(p)$ and $K$, since $\Psi_K$ is a bundle map.
 It follows from the chain rule  \eqref{eq:chainrule_TM} that
  $\nabla Y(p)$ depends only on $K$, $X(p)$ and $\nabla X(p)$. To see that the dependence is smooth,
  one can use the definition of the horizontal and vertical derivative $(\Psi_K)_1$ and $(\Psi_K)_2$
  and check that these functions are jointly  smooth in $K$ and its their arguments.

 \medskip

To prove  \eqref{eq:phi_K_circ_X_zero_order},  set
  $\Psi(K,v) := \Psi_K(v)$.  We denote by $D_1 \Psi$  the derivative of
  $\Psi$ with respect to the first  argument.
  We have  $\Psi(0,v) = v$ for all $v \in U_{\delta/2}$.
  Using that $d\exp_p(0) = \Id$ we also get
  $$D_1 \Psi(0,0)(K) =  \frac{d}{ds}|_{s=0} \Psi_{sK}(0) = \frac{d}{ds} |_{s=0}\exp_p^{-1} \phi_{sK}(p) = K(p).$$
  Hence
  \begin{align}
& \,  \Psi(K,v) -   (v + K(p)) = \Psi(K,v) -\Psi_0(v) -K(p)   \nonumber \\
= & \, \int_0^1  \frac{d}{ds} \Psi_{sK}(v) - D_1 \Psi(0,0)(K)   \, ds   \nonumber  \\
= & \, \int_0^1  D_1 \Psi(sK,v)(K) - D_1 \Psi(0,0)(K) \, ds  . \label{eq:expansion_Psi}
  \end{align}
  Since $\overline{U_{\delta/4}} \subset U_{\delta/2}$ and $\overline{B_{\delta/4}} \subset B_{\delta/2}$
   are
   compact   and $(K,v) \mapsto \Psi(K,v)$ is smooth it follows
    that $| D_1 \Psi(sK,v)(K) - D_1 \Psi(0,0)(K)|  \le c  |K|   (|K| + |v|)$ and this implies
  \eqref{eq:phi_K_circ_X_zero_order}.

  \medskip

  To prove   \eqref{eq:phi_K_circ_X} we argue similarly, using first the chain rule,
  Proposition~\ref{pr:chainrule}. Thus  there exist smooth maps $\Psi_1$ and $\Psi_2$
  such that, for $v \in T_p M$,
  $$ \nabla_v Y = \Psi_1(K,X(p))  v+ \Psi_2(K,X(p)) \nabla_v X.$$
  Since $\Psi(0,w) = w$ it follows that $\Psi_1(0,w) = 0$ and $\Psi_2(0,w) = \id$ for all $w \in U_{\delta/2}$.

  \medskip

  To estimate the term with $\Psi_2$ we use that  $|\Psi_2(K,w) - \Psi_2(0,w)|
  \le C |K|$ if $|w| \le \delta/4$ and $|K| \le \delta/4$ and  we get
  \begin{equation}   \label{eq:estimate_Psi2K}  |\Psi_2(K,X(p)) \nabla_v X - \nabla_v X| \le C |K|  \,  |\nabla_v X|.
  \end{equation}

  For $\Psi_1$ we can argue as in  \eqref{eq:expansion_Psi}. Thus we get  the estimate
  $ |\Psi_1(K,w) v  - \nabla_v K| \le C|K| |v|(|w| + |K|)$ if  we can show that
  \begin{equation} \label{eq:derivative_Psi1}
  \frac{d}{ds} |_{s=0}\Psi_1(sK,0) v = \nabla_v K.
  \end{equation}
  Combining the estimates for $\Psi_1$ and $\Psi_2$, we get  \eqref{eq:phi_K_circ_X}.

  \medskip

It only remains to show  \eqref{eq:derivative_Psi1}. To do so,   let $\gamma$ be a curve with $\gamma'(0) = v$ and let $P_t$ denote parallel transport along $\gamma$.
  Then
  \begin{equation*}\begin{split}
  \Psi_1(K,0) v = &\frac{D}{dt}|_{t=0} \Psi_K(P_t 0) = \frac{D}{dt}|_{t=0} \Exp^{-1}(\gamma(t),  \phi_K(\gamma(t)))\\
  = &\frac{D}{dt}|_{t=0} \exp_{\gamma(t)}^{-1} \phi_K(\gamma(t)) .
  \end{split}\end{equation*}
  Set $Y(s,t) := \exp_{\gamma(t)}^{-1} \phi_{sK}(\gamma(t))$.
  Since $d \exp_p(0)$ is the identity map,  we get
  \begin{equation} \label{eq:commute_derivatives_nabla_Psi_bounds}
   \frac{d}{ds}|_{s=0}  Y(s,t) = \frac{d}{ds}|_{s=0}  \phi_{sK}(\gamma(t)) = K(\gamma(t)).
   \end{equation}
  Since the parallel transport along $\gamma$ commutes with $\frac{d}{ds}$ and since
   $\frac{D}{dt} = P_t \circ \frac{d}{dt} \circ P_t^{-1}$, we get $\frac{d}{ds} \frac{D}{dt} Y = \frac{D}{dt} \frac{d}{ds} Y$ and thus
$$  \frac{d}{ds}|_{s=0}  \Psi_1(sK,0) v =
 \left(\frac{d}{ds} \frac{D}{dt} Y\right)(0,0) = \left(\frac{D}{dt} \frac{d}{ds} Y\right)(0,0) = \frac{D}{dt}|_{t=0}  K(\gamma(t)) = \nabla_v K.$$
  \end{proof}

\subsection{Minimizing out the action of isometries}

\begin{lemma}  \label{le:exists_minimizier_killing}   Assume that $\mathcal K \ne \{0\}$.
Let $\mathcal U$ be as in Theorem~\ref{th:isom_lie}~\ref{it:diffeo_K_Isom} and let
$$  {\widehat B}_\delta :=   \{ K \in \mathcal K : \| K \|_{L^2} < \delta \}.$$
Then
there exist $\delta_1$, $C>0$ with the following property.

If $X: M \to TM$ is a  $C^1$ vector field with $\| X\|_{C^0} \le \delta_1$
and if $\Psi_K$ is defined as in Proposition~\ref{le:composition_killing}, then the functional
$$ I_X(K) := \| \Psi_K \circ X\|_{L^2}$$
attains its minimum in the open set  $\widehat B_{\delta}$ with $\delta := 5 \|X\|_{L^2}$ and $\widehat B_\delta \subset \mathcal U$.

Moreover, if $\overline K$ is a minimizer of $I_X$ in $\widehat B_{\delta}$ with $\delta = 5 \|X\|_{L^2}$
  and $\overline X := \Psi_{\overline K} \circ X$
then, for all $K \in \mathcal K$,
\begin{equation}  \label{eq:minimizer_almost_orthogonal}
|(\overline X,  K)_{L^2} |  \le  C \| \overline X\|_{L^2}^2  \,\| K\|_{L^2}.
\end{equation}
\end{lemma}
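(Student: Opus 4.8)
The plan is: (i) fix $\delta_1>0$ so small that, writing $\delta:=5\|X\|_{L^2}$, the requirements of the statement hold automatically; (ii) prove existence of a minimizer of $I_X$ over $\overline{\widehat B_\delta}$ by compactness, and force it into the \emph{open} ball by a coercivity estimate on the sphere $\|K\|_{L^2}=\delta$; (iii) deduce \eqref{eq:minimizer_almost_orthogonal} from the first-order optimality condition, after rewriting a left composition by an isometry close to $\phi_{\overline K}$ as a one-parameter perturbation of the \emph{single} functional $I_{\overline X}$ near its argument $0$. Throughout I may assume $X\not\equiv0$ (otherwise take $\overline K=0$, $\overline X=0$, and everything is trivial). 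For (i), since $\mathcal K$ is finite dimensional all norms on it are equivalent and $\mathcal U$ contains a $\|\cdot\|_{L^2}$-ball; using $\|X\|_{L^2}\le\vol(M)^{1/2}\|X\|_{C^0}\le\vol(M)^{1/2}\delta_1$, a small enough $\delta_1$ guarantees $\widehat B_\delta\subset\mathcal U$, that $\Psi_K\circ X$ and all estimates of Lemma~\ref{le:composition_killing} apply for every $K\in\overline{\widehat B_\delta}$, and (by \eqref{eq:phi_K_circ_X_zero_order}) that $\|\Psi_K\circ X\|_{C^0}$ is uniformly small on $\overline{\widehat B_\delta}$.

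For (ii): the map $K\mapsto\Psi_K\circ X$ is continuous into $L^2$ (smoothness of $(K,v)\mapsto\Psi_K(v)$, compactness of $M$, dominated convergence), so $I_X$ is continuous on the compact set $\overline{\widehat B_\delta}$ and attains its minimum there; since $\Psi_0=\id$, $\min_{\overline{\widehat B_\delta}}I_X\le I_X(0)=\|X\|_{L^2}$. On the sphere $\|K\|_{L^2}=\delta=5\|X\|_{L^2}$, the expansion \eqref{eq:phi_K_circ_X_zero_order} gives $\Psi_K\circ X=X+K+R_K$ with $\|R_K\|_{L^2}\le C\|K\|_{C^0}\bigl(\|X\|_{L^2}+\vol(M)^{1/2}\|K\|_{C^0}\bigr)\le C'\|X\|_{L^2}^2\le C'\vol(M)^{1/2}\delta_1\|X\|_{L^2}$ (using $\|K\|_{C^0}\le c_0\|K\|_{L^2}=5c_0\|X\|_{L^2}$), whence $I_X(K)\ge\|K\|_{L^2}-\|X\|_{L^2}-\|R_K\|_{L^2}\ge(4-C'\vol(M)^{1/2}\delta_1)\|X\|_{L^2}$. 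Shrinking $\delta_1$ so that $C'\vol(M)^{1/2}\delta_1<3$, the right-hand side exceeds $\|X\|_{L^2}$, so the minimum cannot sit on the boundary; hence a minimizer $\overline K$ exists in $\widehat B_\delta$.

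For (iii): fix $K\in\mathcal K$; if $\overline X\equiv0$ then \eqref{eq:minimizer_almost_orthogonal} is trivial, so assume $\overline X\not\equiv0$. Since $\overline K\in\mathcal U$ and $\mathrm{LieExp}$ is a diffeomorphism onto an open neighbourhood of $\id$ in $\Isom_+(M)$ (Theorem~\ref{th:isom_lie}), for $|s|$ small the isometry $\phi_{sK}\circ\phi_{\overline K}$ lies in that neighbourhood, so $K''(s):=\mathrm{LieExp}^{-1}(\phi_{sK}\circ\phi_{\overline K})$ is well defined, smooth, with $K''(0)=\overline K$ and $\phi_{K''(s)}=\phi_{sK}\circ\phi_{\overline K}$. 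Applying \eqref{eq:formula_Y_K_X} twice (with $(X,K''(s))$ and with $(\overline X,sK)$) together with $\exp\overline X=\phi_{\overline K}\circ f$ gives $\exp(\Psi_{K''(s)}\circ X)=\phi_{K''(s)}\circ f=\phi_{sK}\circ\exp\overline X=\exp(\Psi_{sK}\circ\overline X)$; since both vector fields have $C^0$-norm below $\inj(M)$, they agree by Proposition~\ref{pr:exists_Exp_inverse}, so $I_X(K''(s))=I_{\overline X}(sK)$ for $|s|$ small. As $\widehat B_\delta$ is open and $K''(0)=\overline K$, minimality gives $I_{\overline X}(sK)=I_X(K''(s))\ge I_X(\overline K)=\|\overline X\|_{L^2}=I_{\overline X}(0)$ for small $|s|$, so $s=0$ minimizes the smooth function $s\mapsto I_{\overline X}(sK)^2=\int_M|\Psi_{sK}(\overline X(p))|^2\,d\vol_M$, giving $0=\int_M\langle\overline X(p),\tfrac{d}{ds}\big|_{s=0}\Psi_{sK}(\overline X(p))\rangle\,d\vol_M$. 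Now $\Psi_{sK}(v)=\exp_p^{-1}(\phi_{sK}(\exp_p v))$ and $\phi_{sK}=\phi_{K,s}$, so (as in \eqref{eq:commute_derivatives_nabla_Psi_bounds}) $\tfrac{d}{ds}\big|_{s=0}\Psi_{sK}(v)=d(\exp_p^{-1})\big|_{\exp_p v}\,K(\exp_p v)$, which equals $K(p)$ at $v=0$; by smoothness and norm equivalence on $\mathcal K$, $\tfrac{d}{ds}\big|_{s=0}\Psi_{sK}(\overline X(p))=K(p)+e_K(p)$ with $|e_K(p)|\le C\|K\|_{C^0}\,|\overline X(p)|$, hence $\|e_K\|_{L^2}\le C\|K\|_{C^0}\|\overline X\|_{L^2}\le Cc_0\|K\|_{L^2}\|\overline X\|_{L^2}$. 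Thus $(\overline X,K)_{L^2}=-(\overline X,e_K)_{L^2}$, and Cauchy--Schwarz gives $|(\overline X,K)_{L^2}|\le\|\overline X\|_{L^2}\|e_K\|_{L^2}\le Cc_0\|\overline X\|_{L^2}^2\|K\|_{L^2}$, which is \eqref{eq:minimizer_almost_orthogonal}.

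I expect the main obstacle to be step (iii): faithfully encoding ``compose $f$ on the left by an isometry close to $\phi_{\overline K}$'' as a perturbation of the fixed functional $I_{\overline X}$ around $0$ — this is exactly what lets the Lie-group structure from Theorem~\ref{th:isom_lie} plus the crude expansions of Lemma~\ref{le:composition_killing} substitute for an explicit Euler--Lagrange computation — and then tracking that the resulting error $e_K$ is linear in $|\overline X(p)|$. Step (ii) should be routine once the boundary coercivity estimate, which rests entirely on \eqref{eq:phi_K_circ_X_zero_order}, is in place.
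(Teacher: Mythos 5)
Your proof is correct and follows essentially the same route as the paper: existence via compactness of the closed ball plus the boundary coercivity coming from \eqref{eq:phi_K_circ_X_zero_order}, and the almost-orthogonality \eqref{eq:minimizer_almost_orthogonal} via the first-variation argument obtained by composing with $\phi_{sK}$ and re-parametrizing through $\mathrm{LieExp}$ so that the perturbed competitor stays in $\widehat B_\delta$. Your step (iii) merely spells out (with the identification $\Psi_{K''(s)}\circ X=\Psi_{sK}\circ\overline X$ and an explicit formula for $\tfrac{d}{ds}\big|_{s=0}\Psi_{sK}$) what the paper compresses into the statement $\phi_{tK}\circ\phi_{\overline K}=\phi_{L_t}$ with $L_t\in\widehat B_\delta$ and the bound $|\tfrac{d}{dt}|_{t=0}\Psi_{tK}\circ\overline X-K|\le c|K|\,|\overline X|$, so the two arguments coincide in substance.
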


\begin{proof} We first show existence of a minimizer in $\widehat  B_{\delta}$ for a suitable choice of $\delta > 0$.
We assume $\delta_1\le \inj(M)/4$,
pick $\delta_2>0$ such that
$\|K\|_{L^2}\le\delta_2$ implies $\|K\|_{C^0}\le\inj(M)/4$, and for $\|K\|_{L^2}\le\delta_2$
define $Y_K := \Psi_K \circ X$ as in Lemma~\ref{le:composition_killing}.
Since $Y_K(p)$
  depends smoothly  on $K$, for $\delta\le\delta_2$
the functional $I_X$ is continuous on the closed ball $\overline{B_{\delta}}:=\{ K \in \mathcal K: \|K\|_{L^2} \le\delta \}$.
Since $\mathcal K$ is finite dimensional, this ball is compact and hence $I_X$ attains its minimum
in the closed ball.  It only remains to show that, for a suitable choice of $\delta_1$ and $\delta$,
the minimizer   does not lie on the boundary of the ball.

This is an easy consequence of the pointwise estimate  \eqref{eq:phi_K_circ_X_zero_order}.
Indeed, since the norms  $\|K\|_{C^0}$ and $\|K\|_{L^2}$ are equivalent on the finite dimensional space $\mathcal K$, the estimate   \eqref{eq:phi_K_circ_X_zero_order} implies that
\begin{align*}
 \| \Psi_K \circ X\|_{L^2} \ge & \,  \| X + K \|_{L^2} - C' \|K\|_{L^2} (\|K\|_{L^2} + \|X\|_{L^2})
 \\
 \ge&\,  \|K\|_{L^2} - \|X\|_{L^2}  - C'  \|K\|_{L^2} (\|K\|_{L^2} + \|X\|_{L^2}).
 \end{align*}
Since $I_X(0) = \|X\|_{L^2}$,  a minimizer $\overline K$ of $I_X$ in the closed ball  $\overline{B_{\delta}}$
must satisfy
$$ \|   \overline K \|_{L^2} \le  2 \|X\|_{L^2}  +  C'
 \|   \overline K\|_{L^2} (\|   \overline K\|_{L^2} + \|X\|_{L^2}).$$
Now assume that $\delta$ is so small that $ 4 C' \delta  \le 1$.  Then
$$  \| \overline K\|_{L^2} \le   {\frac94} \|X\|_{L^2}  + {\frac14}  \| \overline K\|_{L^2},$$
so that $\| \overline K\|_{L^2} \le   3 \|X\|_{L^2}$.
Taking $\delta := 5\| X\|_{L^2}$, we get  $\| \overline K\|_{L^2} < \delta$.
To conclude the existence proof, we only need to show that we can choose $\delta_1$ so small
that $\delta = 5\| X\|_{L^2}$ is admissible, i.e., that we have $4 C' \delta \le 1$, $ 4 \delta_1 \le \inj(M)$ and $\delta\le\delta_2$.
By the Cauchy-Schwarz inequality we have $\|X\|_{L^2} \le (\vol(M))^{1/2} \|X \|_{C^0}$.
Thus we may take $\delta_1$ as the minimum of $\frac{1}{5} \vol(M)^{-1/2} \min(\delta_2,(4C')^{-1})$ and $\frac14 \inj(M)$.
Possibly reducing $\delta_1$ further, we can ensure
that $\widehat B_{\delta}$ lies in the open neighbourhood $\mathcal U$ in Theorem~\ref{th:isom_lie}~\ref{it:diffeo_K_Isom}.

\medskip

To prove the estimate   \eqref{eq:minimizer_almost_orthogonal}, we consider a Killing field $K$
and we first note that for sufficiently small $t$ there exists $L_t \in \widehat B_{\delta}$ such that
$\phi_{tK} \circ \phi_{\overline K} = \phi_{L_t}$. This follows from Theorem~\ref{th:isom_lie}~\ref{it:diffeo_K_Isom} and the inclusion $\widehat B_{\delta}\subset \mathcal U$.
Thus
$$ \| \overline  X\|_{L^2}^2 \le  \| \Psi_{tK} \circ \overline X\|_{L^2}^2
\quad \text{if $|t|$ is sufficiently small.} $$
Hence  $(\overline X, \,  \frac{d}{dt}|_{t=0}    \Psi_{tK} \circ \overline X)_{L^2} =0$.
It follows from
  \eqref{eq:phi_K_circ_X_zero_order}
  that we have the pointwise estimate
$$ \left| \frac{d}{dt}|_{t=0} \Psi_{tK}  \circ \overline X - K \right| \le c  |K| \, | \overline X|. $$
Using again that the $C^0$ norm and the $L^2$ norm are equivalent on $\mathcal K$, we get   \eqref{eq:minimizer_almost_orthogonal}.
\end{proof}

\subsection{Rigidity estimates modulo isometries}
We now collect the previous results to establish the desired rigidity estimates for vector fields and for maps.

\begin{theorem}  \label{th:C1_rigidity_new} Let $p\in(1,\infty)$. Let $\delta_1 > 0$ be as in
Lemma~\ref{le:exists_minimizier_killing}. Then there exist $\delta_0 \in (0, \delta_1]$  and $C>0$ with the following property. Let
$X$ be a vector field with $\|X \|_{C^1} \le \delta_0$ and let $\overline X = \Psi_{\overline K} \circ X$ be the field from
Lemma~\ref{le:exists_minimizier_killing}.
Then
\begin{equation}  \label{eq:rigidity_vectorfields}
 \| \overline X \| _{W^{1,p}} \le C \| (\exp \overline X)^*g - g\|_{L^p} = C
 \| (\exp X)^*g - g\|_{L^p} .
\end{equation}
\end{theorem}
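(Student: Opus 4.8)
The plan is first to dispose of the stated equality and of the case $\mathcal K=\{0\}$, and then to prove $\|\overline X\|_{W^{1,p}}\le C\|(\exp X)^{*}g-g\|_{L^{p}}$ by a contradiction argument. The equality $\|(\exp\overline X)^{*}g-g\|_{L^{p}}=\|(\exp X)^{*}g-g\|_{L^{p}}$ is immediate: by \eqref{eq:formula_Y_K_X} one has $\exp\overline X=\phi_{\overline K}\circ\exp X$ with $\phi_{\overline K}\in\Isom_{+}(M)$, so \eqref{eq:invarianc_metric_deficit} gives $(\exp\overline X)^{*}g=(\exp X)^{*}(\phi_{\overline K}^{*}g)=(\exp X)^{*}g$, i.e.\ the two metric deficits coincide as tensor fields. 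If $\mathcal K=\{0\}$ there is nothing to minimize, $\overline X=X$, and the remaining bound is precisely \eqref{eq:rigidity_without_killing} of Proposition~\ref{pr:estimate_without_killing}; so from here on I assume $\mathcal K\neq\{0\}$, so that Lemma~\ref{le:exists_minimizier_killing} applies (recall $\|X\|_{C^{1}}\le\delta_{0}\le\delta_{1}$).

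Before the main argument I would record two pointwise inputs about $\overline X=\Psi_{\overline K}\circ X$. First, combining \eqref{eq:phi_K_circ_X_zero_order}--\eqref{eq:phi_K_circ_X} with the equivalence of norms on the finite-dimensional space $\mathcal K$ and the bound $\|\overline K\|_{L^{2}}\le C\|X\|_{L^{2}}\le C\|X\|_{C^{0}}$ from Lemma~\ref{le:exists_minimizier_killing}, one gets $\|\overline X\|_{C^{1}}\le C\|X\|_{C^{1}}\le C\delta_{0}$ with a uniform constant, provided $\delta_{0}$ is below a fixed threshold. Second, writing $[(\exp\overline X)^{*}g-g](p)=H(\overline X(p),\nabla\overline X(p))$ as in \eqref{eq:metric_deficit_pointwise_X} and applying the linearization estimate \eqref{eq:metric_deficit_linearization} (legitimate once $\|\overline X\|_{C^{1}}$ is small enough), one obtains $\big|[(\exp\overline X)^{*}g-g]-g(\mathcal L\overline X\,\cdot,\cdot)\big|(p)\le C(|\overline X|+|\nabla\overline X|)^{2}(p)\le C\|\overline X\|_{C^{1}}\,(|\overline X|+|\nabla\overline X|)(p)$; since $g$ is smooth and uniformly positive-definite on the compact manifold $M$, integrating gives
$$\|\mathcal L\overline X\|_{L^{p}}\le C\,\|(\exp\overline X)^{*}g-g\|_{L^{p}}+C\,\|\overline X\|_{C^{1}}\,\|\overline X\|_{W^{1,p}}.$$

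Then I would argue by contradiction. If the estimate fails, there are vector fields $X_{k}$ with $\|X_{k}\|_{C^{1}}\le1/k$ (hence $\le\delta_{1}$ for large $k$), minimizers $\overline K_{k}$ and $\overline X_{k}=\Psi_{\overline K_{k}}\circ X_{k}$ as in Lemma~\ref{le:exists_minimizier_killing}, with $\|\overline X_{k}\|_{W^{1,p}}>k\,\|(\exp X_{k})^{*}g-g\|_{L^{p}}$. Set $t_{k}:=\|\overline X_{k}\|_{W^{1,p}}>0$; since $\|\overline X_{k}\|_{C^{1}}\le C\|X_{k}\|_{C^{1}}\to0$ and the $W^{1,p}$ norm is dominated by the $C^{1}$ norm on the compact $M$, $t_{k}\to0$. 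With $Z_{k}:=\overline X_{k}/t_{k}$, so $\|Z_{k}\|_{W^{1,p}}=1$, dividing the displayed inequality by $t_{k}$ and using $\|(\exp\overline X_{k})^{*}g-g\|_{L^{p}}=\|(\exp X_{k})^{*}g-g\|_{L^{p}}<t_{k}/k$ gives $\|\mathcal L Z_{k}\|_{L^{p}}\le C(1/k+\|\overline X_{k}\|_{C^{1}})\to0$. Moreover $\|\overline X_{k}\|_{L^{2}}^{2}\le\|\overline X_{k}\|_{L^{\infty}}\|\overline X_{k}\|_{L^{1}}\le C\|\overline X_{k}\|_{C^{1}}\|\overline X_{k}\|_{L^{p}}\le C\|\overline X_{k}\|_{C^{1}}\,t_{k}$, so the near-orthogonality \eqref{eq:minimizer_almost_orthogonal} yields $|(Z_{k},K)_{L^{2}}|=t_{k}^{-1}|(\overline X_{k},K)_{L^{2}}|\le C\|\overline X_{k}\|_{C^{1}}\,\|K\|_{L^{2}}\to0$ for every fixed $K\in\mathcal K$.

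Finally, along a subsequence $Z_{k}$ converges weakly in $W^{1,p}$ and, by the compact embedding $W^{1,p}\hookrightarrow L^{p}$, strongly in $L^{p}$ to some $Z$; since $\mathcal L$ is bounded and $\mathcal L Z_{k}\to0$ strongly, $\mathcal L Z=0$, i.e.\ $Z\in\mathcal K$. Pairing against a fixed smooth $K\in\mathcal K$ and using $Z_{k}\to Z$ in $L^{1}$ gives $(Z,K)_{L^{2}}=\lim_{k}(Z_{k},K)_{L^{2}}=0$; taking $K=Z$ forces $Z=0$. Hence $Z_{k}\to0$ in $L^{p}$, and Lemma~\ref{le:elliptic_estimate}, inequality \eqref{eq:W1p_estimate_Killing}, gives $\|\nabla Z_{k}\|_{L^{p}}\le C(\|\mathcal L Z_{k}\|_{L^{p}}+\|Z_{k}\|_{L^{p}})\to0$, so $\|Z_{k}\|_{W^{1,p}}\to0$, contradicting $\|Z_{k}\|_{W^{1,p}}=1$. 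I expect the only delicate point to be keeping track of the three scales $C^{1}$, $W^{1,p}$ and $L^{2}$ on the compact manifold — in particular the estimate $\|\overline X_{k}\|_{L^{2}}^{2}/t_{k}\to0$, which is what makes the quadratic gain in \eqref{eq:minimizer_almost_orthogonal} strong enough to annihilate the $\mathcal K$-component of the weak limit; the rest is soft compactness together with the elliptic estimate \eqref{eq:W1p_estimate_Killing} and the pointwise linearization of the metric deficit \eqref{eq:metric_deficit_linearization}.
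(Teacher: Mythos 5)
Your proposal is correct and follows essentially the same route as the paper: a contradiction/compactness argument in which the normalized fields $Y_k=\overline X_k/\|\overline X_k\|_{W^{1,p}}$ have $\|\mathcal L Y_k\|_{L^p}\to 0$ by the linearization estimate \eqref{eq:metric_deficit_linearization}, are asymptotically $L^2$-orthogonal to $\mathcal K$ by \eqref{eq:minimizer_almost_orthogonal}, hence converge to $0$, contradicting the normalization via the Korn-type estimate \eqref{eq:W1p_estimate_Killing}. The only (harmless) additions beyond the paper's proof are your explicit verification of the equality of the two metric deficits and the separate remark on $\mathcal K=\{0\}$, which the paper instead handles in Corollary~\ref{co:C1_rigidity_maps}.
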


\begin{corollary}   \label{co:C1_rigidity_maps}
Let $p \in (1,\infty)$.  Then thre exist  $\delta > 0$  and $C>0$   with the following property.
If $\| \imath \circ f - \imath\|_{C^1} \le \delta$  then there exist $\phi \in \Isom_+(M)$ such that
\begin{equation} \label{eq:C1_rigidity_maps}
\| \imath  \circ \phi \circ f - \imath\|_{W^{1,p}} \le C \|f^*g - g\|_{L^p}.
\end{equation}
If $\mathcal K = \{0\}$ then the assertion holds with $\phi = \id$. If $\mathcal K \ne \{0\}$ then
$\phi$ can be taken as the isometry generated by a Killing field $K$ with $|K| \le C \delta$.
\end{corollary}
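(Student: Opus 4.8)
The plan is to reduce the statement to Theorem~\ref{th:C1_rigidity_new} (resp.\ Proposition~\ref{pr:estimate_without_killing} when $\mathcal K=\{0\}$) by passing from the map $f$ to the vector field $X$ with $f=\exp X$, and then transporting the $W^{1,p}$ estimate for $X$ back to an estimate for $\imath\circ f$ via Lemma~\ref{le:extrinsic_vs_vectorfields}. First I would verify the hypothesis of Proposition~\ref{pr:exists_Exp_inverse}: by the lower bound in \eqref{eq:equivalence_extrinsic_distance} one has $d(p,f(p))\le C|\imath(f(p))-\imath(p)|\le C\|\imath\circ f-\imath\|_{C^0}\le C\delta$, which is below $\inj(M)$ once $\delta$ is small. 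Hence $X:=\Exp^{-1}\circ(\id,f)$ is a well-defined $C^1$ vector field with $f=\exp X$ and $\|X\|_{C^0}<\inj(M)$. Next, applying the pointwise bounds of Lemma~\ref{le:extrinsic_vs_vectorfields} with the comparison map $g=\id$ (so $Y\equiv 0$, and the threshold $L=\sqrt n=|d\imath|$ is available in \eqref{le:extrinsic_pointwise_gradient_lower}), estimate \eqref{le:extrinsic_pointwise} gives $\|X\|_{C^0}\le C\|\imath\circ f-\imath\|_{C^0}$ and \eqref{le:extrinsic_pointwise_gradient_lower} gives $\|\nabla X\|_{C^0}\le C\|\imath\circ f-\imath\|_{C^1}$, so $\|X\|_{C^1}\le C\delta$. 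Shrinking $\delta$, we may thus assume $\|X\|_{C^1}\le\delta_0$, the smallness threshold in Theorem~\ref{th:C1_rigidity_new} (and in Proposition~\ref{pr:estimate_without_killing}).

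If $\mathcal K=\{0\}$ I would invoke \eqref{eq:rigidity_without_killing} directly: $\|X\|_{W^{1,p}}\le C\|(\exp X)^*g-g\|_{L^p}=C\|f^*g-g\|_{L^p}$. Then \eqref{le:extrinsic_pointwise} and \eqref{le:extrinsic_pointwise_gradient} (again with $g=\id$, $Y\equiv 0$, $L>0$ arbitrary) yield, after taking $p$-th powers and integrating, $\|\imath\circ f-\imath\|_{W^{1,p}}\le C\|X\|_{W^{1,p}}$, so the claim holds with $\phi=\id$.

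If $\mathcal K\ne\{0\}$ I would apply Lemma~\ref{le:exists_minimizier_killing} to obtain the minimizer $\overline K\in\mathcal K$ with $\|\overline K\|_{L^2}\le C\|X\|_{L^2}\le C\|X\|_{C^0}\le C\delta$, and the field $\overline X=\Psi_{\overline K}\circ X$. Theorem~\ref{th:C1_rigidity_new} gives $\|\overline X\|_{W^{1,p}}\le C\|(\exp X)^*g-g\|_{L^p}=C\|f^*g-g\|_{L^p}$. By \eqref{eq:formula_Y_K_X}, $\exp\overline X=\phi_{\overline K}\circ\exp X=\phi_{\overline K}\circ f$, where $\phi_{\overline K}$ is the time-one flow of the Killing field $\overline K$; this flow is path connected to the identity, hence $\phi_{\overline K}\in\Isom_+(M)$ by Theorem~\ref{th:isom_lie}~\ref{it:diffeo_K_Isom}. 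Since by Lemma~\ref{le:composition_killing} the field $\overline X$ is small in $C^0$ (controlled by $\|\overline K\|_{C^0}+\|X\|_{C^0}\lesssim\delta$), it takes values in $U_{\inj(M)/2}$, so Lemma~\ref{le:extrinsic_vs_vectorfields} with $g=\id$ applies once more and gives $\|\imath\circ(\phi_{\overline K}\circ f)-\imath\|_{W^{1,p}}=\|\imath\circ\exp\overline X-\imath\|_{W^{1,p}}\le C\|\overline X\|_{W^{1,p}}\le C\|f^*g-g\|_{L^p}$. Setting $\phi:=\phi_{\overline K}$ and $K:=\overline K$, with $|K|\le C\|\overline K\|_{L^2}\le C\delta$ by equivalence of norms on the finite dimensional space $\mathcal K$, completes the argument.

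The main obstacle is purely bookkeeping: the single constant $\delta$ in the statement must be chosen small enough to trigger, in this order, Proposition~\ref{pr:exists_Exp_inverse} (to write $f=\exp X$), the $C^1$-smallness of $X$ required by Theorem~\ref{th:C1_rigidity_new} and Lemma~\ref{le:exists_minimizier_killing}, and the $C^0$-smallness of $\overline X$ needed to re-apply Lemma~\ref{le:extrinsic_vs_vectorfields} at the end. There is no new analytic content; the only structural point worth stating explicitly is that $\phi_{\overline K}$ is orientation preserving, which holds because it lies in the identity component of $\Isom(M)$.
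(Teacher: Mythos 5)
Your proposal is correct and follows essentially the same route as the paper: write $f=\exp X$ via Proposition~\ref{pr:exists_Exp_inverse}, deduce $\|X\|_{C^1}\le C\delta$ from Lemma~\ref{le:extrinsic_vs_vectorfields}, then apply Proposition~\ref{pr:estimate_without_killing} when $\mathcal K=\{0\}$ or Theorem~\ref{th:C1_rigidity_new} together with Lemma~\ref{le:exists_minimizier_killing} and \eqref{eq:formula_Y_K_X} otherwise, and translate back with Lemma~\ref{le:extrinsic_vs_vectorfields}. Your additional bookkeeping (the $C^0$-smallness of $\overline X$, the choice $L=\sqrt n$, and the identity-component argument for orientation preservation) simply makes explicit what the paper leaves implicit.
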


\begin{proof} The proof consists in assembling the various results in this section.
By Proposition~\ref{pr:exists_Exp_inverse} there exists a $C^1$ vector field $X$ such that
$f = \exp X$, if $\delta \le \frac12 \inj(M)$.
Moreover, the pointwise bounds in  Lemma~\ref{le:extrinsic_vs_vectorfields} imply
that  $\|X \|_{C^1} \le C \delta$.

\medskip

Now assume first that $\mathcal K \ne \{0\}$.
After reducing $\delta$, if needed, we can apply Theorem~\ref{th:C1_rigidity_new} to obtain $\overline X = \Psi_{\overline K} \circ X$ which satisfies  \eqref{eq:rigidity_vectorfields}.
We have  $\exp \overline X = \phi_K \circ \exp X = \phi_K \circ f$.
Thus  \eqref{eq:C1_rigidity_maps} follows
from \eqref{eq:rigidity_vectorfields} and Lemma~\ref{le:extrinsic_vs_vectorfields}.

\medskip

If $\mathcal K = \{0\}$ we do not need to apply  Lemma~\ref{le:exists_minimizier_killing}.
Instead of   \eqref{eq:rigidity_vectorfields} we can directly use  \eqref{eq:rigidity_without_killing}
to estimate $\|X\|_{W^{1,p}}$ and, using
Lemma~\ref{le:extrinsic_vs_vectorfields}, we get  \eqref{eq:C1_rigidity_maps} with $\phi = \id$.
\end{proof}

\begin{proof}[Proof of Theorem~\ref{th:C1_rigidity_new}]
As in the proof of  \eqref{eq:W1p_estimate_Killing_improved} we argue by contradiction.
If the assertion is false, there exist vector fields $X_k$ such that $\|X_k\|_{C^1} \to 0$,
and the fields $\overline X_k$ constructed as in the statement from the
Killing fields  $\overline K_k$ of
Lemma~\ref{le:exists_minimizier_killing} obey
\begin{equation}  \label{eq:C1_rigidity_contradiction}
 \| (\exp \overline X_k)^*g - g\|_{L^p}   < \frac1k   \| \overline X_k \| _{W^{1,p}}.
\end{equation}
Additionally, by Lemma~\ref{le:exists_minimizier_killing} we have $\|K_k\|_{L^2}  \le 5 \|X_k\|_{L^2} \to 0$.
By Proposition   \ref{le:composition_killing} we then get $\|\overline X_k\|_{C^1} \to 0$.

\medskip

Define
$$ \eta_k := \|  \overline X_k\|_{W^{1,p}}, \quad Y_k := \frac1{\eta_k} \overline X_k.$$
Then $Y_k$ converges weakly in $W^{1,p}$ and strongly in $L^p$ to  a vector field $Y_0$.
We claim that
\begin{equation}  \label{eq:Y0_killing_perp}
 \int_M (Y_0,K)   \,d\vol_M = 0  \quad \text{for all $K \in \mathcal K$}.
 \end{equation}
 This follows directly from  \eqref{eq:minimizer_almost_orthogonal}. Indeed,
  \eqref{eq:minimizer_almost_orthogonal} implies that
 $$ |(Y_k, K)_{L^2}|  = \frac1{\eta_k}  |( \overline X_k,K)_{L^2} | \le  C  \frac1{\eta_k}  \| \overline X_k\|_{L^2}^2 \, \|K\|_{L^2}
 \le  C\|\overline X_k\|_{C^0}  \,  \|Y_k\|_{L^1}   \,  \|K\|_{L^2}.
 $$
 Since $\|  Y_k \|_{W^{1,p}} = 1$ and $\| X_k\|_{C^1} \to 0$,  we get    \eqref{eq:Y0_killing_perp}.

 \medskip

 Next, we show that
 \begin{equation}  \label{eq:Y0_killing}
  \nabla Y_0 + (\nabla Y_0)^T =0.
  \end{equation}
  By  \eqref{eq:metric_deficit_linearization}, we have the pointwise estimates
 \begin{align*}
 |\nabla \overline X_k + (\nabla \overline X_k)^T|  \le   C | (\exp \overline X_k)^* g - g|   +
C (|\overline X_k|^2  + |\nabla \overline X_k|^2).
 \end{align*}
Dividing by $\eta_k = \|\overline X_k\|_{W^{1,p}}$ and using  \eqref{eq:C1_rigidity_contradiction}
and $\|\overline X_k\|^2_{W^{1,2p}} \le \|\overline X_k\|_{C^1}  \|\overline X_k\|_{W^{1,p}}$ we see that
\begin{equation}   \label{eq:convergence_symmetric_part}
 \lim_{k \to \infty}  \|\nabla Y_k + (\nabla Y_k)^T\|_{L^p}  = 0.
\end{equation}
Since $Y_k$ converges to $Y_0$ weakly in $W^{1,p}$ and strongly in $L^p$ we get
  \eqref{eq:Y0_killing}.

  \medskip

Applying  \eqref{eq:Y0_killing_perp} with $K = Y_0$ we deduce that $Y_0 = 0$.
  Thus $Y_k  \to 0$ in $L^p$ and together with  \eqref{eq:convergence_symmetric_part}
  and the elliptic estimate  \eqref{eq:W1p_estimate_Killing} we get $\|Y_k\|_{W^{1,p}} \to 0$. This contradicts
  the assumption $\| Y_k\|_{W^{1,p}} = 1$.
  \end{proof}

\section{Proof of the main result}  \label{se:proof_main}

\begin{proof}[Proof of Theorem~\ref{th:optimal_riemannian_rigidity}]  \quad
{\it Step 1}: Set-up.\\
We recall that for $W^{1,p}$ maps $f,g: M \to M$ in~\eqref{eq:dist_W1p(M,M)} we defined the $W^{1,p}$ distance by
$$ d_{1,p}(f,g) := \| \imath \circ f - \imath \circ g\|_{W^{1,p}}$$
and we set
$$ \dist_{1,p}(f, \Isom_+(M)) := \inf_{ \phi \in \Isom_+(M)} d_{1,p}(f,\phi).$$
The infimum is actually attained since $\Isom_+(M)$ is compact, but we do not use this fact.
We also define
$$ e(f) := \| \dist(df, SO(M))\|_{L^p}
=\left( \int_M  \dist^p(df(q), SO(T_q M, T_{f(q)} M))  \, d\vol_M   \right)^{\frac1p}.$$
We need to show that there exists a constant $C_0$ such
that
\begin{equation}  \label{eq:final_estimate}
  \dist_{1,p}(f, \Isom_+(M)) \le C_0 e(f)
\end{equation}
for all $f \in W^{1,p}(M;M)$.

\medskip

{\it Step 2}: Reduction to   $\dist_{1,p}(f, \Isom_+(M)) \le 2 C_2$.\\
It is easy to see that there exist constants $C_1$ and $C_2$ such that
\begin{equation}  \label{eq:final_estimate_large_distance}
  \dist_{1,p}(f, \mathrm{Isom}_+(M)) \le C_1 e(f) + C_2.
\end{equation}
Indeed, we have the trivial estimates
$$ |\imath \circ f (q)- \imath(q)| \le d(f(q), q) \le \diam \,  M$$
and
\begin{align*}
& \, |d (\imath \circ f) (q)- d\imath(q)|  \le |d (\imath \circ f) (q)| + |d\imath(q)| = |df(q)| + \sqrt n\\
\le & \, \dist(df(q), SO(T_q M, T_{f(q)} M)) + 2 \sqrt n.
\end{align*}
Thus  \eqref{eq:final_estimate_large_distance} holds with $C_1 = 1$ and $C_2 = \diam\, M +  2 \sqrt n$.
Hence, if $  \dist_{1,p}(f, \mathrm{Isom}_+(M)) > 2 C_2$, then  \eqref{eq:final_estimate}  holds with
$C_0 = 2 C_1 = 2$.

\medskip

Therefore  it suffices to show  \eqref{eq:final_estimate} under the additional assumption that
$  \dist_{1,p}(f, \mathrm{Isom}_+(M))  \le 2 C_2$.
If the estimate does not hold under this additional assumption, then
there exist $f_k \in W^{1,p}(M;M)$ such that
\begin{equation}  \label{eq:main_theorem_contradiction}
e(f_k) \le \frac1k   \dist_{1,p}(f_k, \mathrm{Isom}_+(M))  \le \frac2k C_2.
\end{equation}

\medskip

{\it Step 3}: Approximation and compactness of low energy maps.\\
By Proposition~\ref{pr:lipschitz_approximation} there exist $\hat f_k$ such that $|d\hat f_k| \le \Lambda$
almost everywhere and
\begin{equation}  \label{eq:triangle_lipschitz}
 d_{1,p}(\hat f_k, f_k) \le C e(f_k), \qquad e(\hat f_k) \le C e(f_k).
 \end{equation}
Let $\alpha \in (0,1)$. By  Theorem~\ref{th:heat_flow_close_to_Phi},
\eqref{eq:almost_harmonic} and~\eqref{eq:bound_error_almost_harmonic}
there exist $\tilde f_k\in W^{1,p}(M;M)$ such that
\begin{equation}  \label{eq:triangle_C1delta}   d_{1,p}(\tilde f_k, \hat f_k) \le C e(f_k),  \qquad e(\tilde f_k) \le C e(f_k)
\end{equation}
and
$$  \| \imath \circ \tilde f_k\|_{C^{1,\alpha}} \le \Lambda'.$$
\medskip

We claim that the maps $\tilde f_k$ are $C^1$ close to an isometry, more precisely,
\begin{equation}  \label{eq:convergence_to_isometry_set}
\delta_k := \inf_{\phi \in \mathrm{Isom}_+(M)} \| \imath\circ  \tilde f_k- \imath \circ \phi \|_{C^1} \to 0
\quad \text{as $k \to \infty$.}
\end{equation}
Indeed, if \eqref{eq:convergence_to_isometry_set} does not hold, then
there exist a $\delta' > 0$ and a subsequence
(not relabelled) such that
\begin{equation}  \label{eq:contradiction_distance_isometries}
\inf_{\phi \in \mathrm{Isom}_+(M)} \| \imath\circ  \tilde f_k- \imath \circ \phi \|_{C^1} \ge \delta' .
\end{equation}
 By the Arzel\`a-Ascoli theorem, there exists a further subsequence (not relabelled) such that
 $F_k := \imath \circ \tilde f_k \to F$ in $C^1$. Since $\imath \circ \tilde f_k(M) \subset \imath(M)$
and $\imath(M)$ is compact,  we get $F(M) \subset \imath(M)$. Thus there exists a continuous
$f: M \to M$ such that $F = \imath \circ f$. By arguing in small charts we see that $f \in C^1$.
Since $\imath$ is an orientation preserving isometry, we have
$$  \dist(dF_k(q), SO(T_q M, d\imath(T_{\tilde f_k(q)} M))) = \dist(d\tilde f_k(q), SO(T_q M, T_{\tilde f_k(q) }M)).$$
Since $F_k \to F$ in $C^1$ and $e(\tilde f_k)  \to 0$  we conclude that
$$
dF(q) \in SO(T_q M, d \imath(  T_{f(q)} M) )\quad  \text{for all $q \in M$}
$$ and thus $df(q) \in SO(T_q M, T_{f(q)} M)$.
Thus, by Lemma~\ref{le:isometries}, we have  $f \in \mathrm{Isom}_+(M)$. Since $F_k \to \imath \circ f$ in $C^1$, this contradicts
 \eqref{eq:contradiction_distance_isometries}. This concludes the proof of   \eqref{eq:convergence_to_isometry_set}.

 \medskip

 {\it Step 4}:  Linearization of the metric deficit near the identity and conclusion.\\
 By  \eqref{eq:convergence_to_isometry_set} there exist $\phi_k \in \mathrm{Isom}_+(M)$ such that $\|  \imath \circ \tilde f_k - \imath  \circ  \phi_k\|_{C^1}
 \to 0$. Let $h_k := \phi_k^{-1} \circ \tilde  f_k$.
 Since $ |d(\imath \circ\phi_k)(p)| = \sqrt n$ for all $p \in M$, it follows
 from Lemma~\ref{le:distance_TM_via_imath}\ref{le:distance_TM_via_imathder} (applied with $\phi = \phi_k^{-1}$) that
 $\imath \circ h_k \to \imath  \quad \text{in $C^1$}$.
 Thus by Corollary~\ref{co:C1_rigidity_maps} there exists a $\tilde \phi_k \in \mathrm{Isom}_+(M)$
 such that
 $$
 d_{1,p}(\tilde\phi_k \circ h_k ,\id)
 \le C \| h_k^*g - g\|_{L^p}.$$
 Now $h_k^*g = (\phi_k^{-1} \circ \tilde f_k)^*g = (\tilde f_k)^*(\phi_k^{-1})^*g = (\tilde f_k)^*g$.
 Since  the differentials $d\tilde f_k^*$  are uniformly bounded we get from
 \eqref{eq:pointwise_bound_deficit}
 $$ \| (\tilde f_k)^*g-g\|_{L^p} \le C e(\tilde f_k).$$
 Applying Lemma~\ref{le:distance_TM_via_imath}\ref{le:distance_TM_via_imathw1p} with the isometry $(\tilde\phi_k \circ \phi_k^{-1})^{-1}$
 we finally get
 $$  d_{1,p}(\tilde f_k,  \phi_k \circ \tilde\phi_k^{-1}) \le C e(\tilde f_k) \le C e(f_k).$$
 By  \eqref{eq:triangle_lipschitz},   \eqref{eq:triangle_C1delta} and the triangle inequality we get
 $$ \dist_{1,p}(f_k, \mathrm{Isom}_+(M)) \le C e(f_k).$$
If $k$ is large  enough, this contradicts   \eqref{eq:main_theorem_contradiction},
and this contradiction concludes the proof.
\end{proof}

\addcontentsline{toc}{section}{References}
\bibliography{noneuclidean}
\bibliographystyle{acm}

\end{document}